\pdfoutput=1
\documentclass[11pt,reqno,a4paper]{amsart}

\usepackage[utf8]{inputenc}
 
\usepackage[
url=false,
doi=false,
isbn=false,
style=alphabetic,
sorting=anyt,
backend=biber,
maxnames=12,
maxalphanames=6,
giveninits=true]{biblatex}
\AtEveryBibitem{\clearlist{language}}
\AtEveryBibitem{\clearfield{pages}} 
\AtEveryBibitem{\clearfield{eprintclass}}
\AtEveryBibitem{\clearfield{eprinttype}}
\AtEveryBibitem{\clearfield{month}}
\renewbibmacro{in:}{}
\DeclareFieldFormat{eprint}{Preprint: {\tt #1}}
\DeclareFieldFormat[article,inproceedings,inbook]{citetitle}{#1}
\DeclareFieldFormat[article,inproceedings,inbook]{title}{#1} 
\usepackage{setspace} 
\AtBeginBibliography{
  \setstretch{1}
}

\bibliography{main}

\usepackage{amsmath}
\numberwithin{equation}{section}
\usepackage{amsthm}
\usepackage{thmtools}
\usepackage{amssymb}
\usepackage{amsfonts}
\usepackage{mathtools}
\usepackage{hhline}
\usepackage{stmaryrd}
\usepackage{enumitem}
\usepackage{centernot}
\usepackage{mathrsfs}
\usepackage[version=4]{mhchem}
\usepackage{dsfont}
\usepackage{comment}
\usepackage{listings}
\usepackage{mathdots}
\usepackage{nicematrix} 
\usepackage{mathalpha} 
\usepackage{xspace}
\usepackage[dvipsnames]{xcolor}
\usepackage{booktabs}
\usepackage{multirow}

\usepackage[font=footnotesize,labelfont=bf]{caption}

\usepackage{array}
\newcolumntype{L}[1]{>{\raggedright\let\newline\\\arraybackslash\hspace{0pt}}m{#1}}
\newcolumntype{C}[1]{>{\centering\let\newline\\\arraybackslash\hspace{0pt}}m{#1}}
\newcolumntype{R}[1]{>{\raggedleft\let\newline\\\arraybackslash\hspace{0pt}}m{#1}}

\usepackage{tikz-cd}
\usepackage{tikz}
\usetikzlibrary{arrows}
\usetikzlibrary{decorations.pathreplacing}
\usetikzlibrary{matrix}
\usetikzlibrary{calc}
\usetikzlibrary{shapes}
\usetikzlibrary{patterns}
\usetikzlibrary{fit,backgrounds,scopes,plotmarks}

\tikzset{ 
    ncbar angle/.initial=90,
    ncbar/.style={
        to path=(\tikztostart)
        -- ($(\tikztostart)!#1!\pgfkeysvalueof{/tikz/ncbar angle}:(\tikztotarget)$)
        -- ($(\tikztotarget)!($(\tikztostart)!#1!\pgfkeysvalueof{/tikz/ncbar angle}:(\tikztotarget)$)!\pgfkeysvalueof{/tikz/ncbar angle}:(\tikztostart)$)
        -- (\tikztotarget)
    },
    ncbar/.default=0.5cm,
}
\tikzset{square left brace/.style={ncbar=1.5mm}}
\tikzset{square right brace/.style={ncbar=-1.5mm}}
\tikzset{round left paren/.style={ncbar=0.5cm,out=120,in=-120}}
\tikzset{round right paren/.style={ncbar=0.5cm,out=60,in=-60}}

\newtheoremstyle{theoremstyle}
{10pt}      
{5pt}       
{\itshape}  
{}          
{\bfseries} 
{.}         
{ }      
{}          

\newtheoremstyle{algorithmstyle}
{10pt}      
{5pt}       
{}  
{}          
{\bfseries} 
{.}         
{ }      
{}          

\newtheoremstyle{examplestyle}
{10pt}      
{5pt}       
{}          
{}          
{\bfseries} 
{.}         
{ }      
{}          

\makeatletter 
\newcommand{\subalign}[1]{%
  \vcenter{%
    \Let@ \restore@math@cr \default@tag
    \baselineskip\fontdimen10 \scriptfont\tw@
    \advance\baselineskip\fontdimen12 \scriptfont\tw@
    \lineskip\thr@@\fontdimen8 \scriptfont\thr@@
    \lineskiplimit\lineskip
    \ialign{\hfil$\m@th\scriptstyle##$&$\m@th\scriptstyle{}##$\hfil\crcr
      #1\crcr
    }%
  }%
}
\makeatother

\theoremstyle{theoremstyle}
\newtheorem{theorem}{Theorem}[section]
\newtheorem{lemma}[theorem]{Lemma} 

\newtheorem{corollary}[theorem]{Corollary}

\newtheorem{theoremalphabetic}{Theorem}

\theoremstyle{examplestyle}
\newtheorem{example}[theorem]{Example}
\AtBeginEnvironment{example}{%
  \pushQED{\qed}%
}
\AtEndEnvironment{example}{\popQED\endexample}

\newtheorem{definition}[theorem]{Definition}

\newtheorem{remark}[theorem]{Remark}

\usepackage[linesnumbered,ruled,noend]{algorithm2e}

\usepackage[unicode,psdextra,pdfusetitle,bookmarks, bookmarksdepth=2, colorlinks=true, linkcolor=blue!50!black, citecolor=black!70, urlcolor=blue!50!black]{hyperref}
\usepackage[capitalise,noabbrev,nameinlink]{cleveref}
\Crefname{algocf}{Algorithm}{Algorithms}

\definecolor{darkorange}{rgb}{1.0, 0.55, 0.0}
\definecolor{darkblue}{rgb}{0.0, 0.0, 0.55}
\definecolor{darkgreen}{rgb}{0.0, 0.2, 0.13}
\definecolor{darkred}{rgb}{0.75, 0.0, 0.0}

\newcommand{\CC}{\mathbb{C}}
\newcommand{\RR}{\mathbb{R}}
\newcommand{\QQ}{\mathbb{Q}}
\newcommand{\ZZ}{\mathbb{Z}}
\newcommand{\FF}{\mathbb{F}}

\newcommand{\mC}{\mathcal{C}}
\newcommand{\suchthat}{\;\ifnum\currentgrouptype=16 \middle\fi|\;}

\newcommand{\lin}{{\mathrm{lin}}}

\newcommand{\bin}{{\mathrm{bin}}}

\newcommand{\CCt}{\mathbb{C}\{\!\{t\}\!\}}
\newcommand{\RRt}{\mathbb{R}\{\!\{t\}\!\}}

\DeclareMathOperator{\rank}{rank}
\DeclareMathOperator{\grc}{grc}
\DeclareMathOperator{\mrc}{mrc}
\DeclareMathOperator{\rowspan}{rowspan}

\DeclareMathOperator{\id}{id}
\DeclareMathOperator{\im}{im}

\DeclareMathOperator{\conv}{conv}
\DeclareMathOperator{\cone}{cone}
\DeclareMathOperator{\diag}{diag}

\DeclareMathOperator{\mult}{mult}

\DeclareMathOperator{\MV}{MV}
\DeclareMathOperator{\vol}{vol}

\DeclareMathOperator{\relint}{relint}
\DeclareMathOperator{\Span}{Span}

\DeclareMathOperator{\Trop}{Trop}
\DeclareMathOperator{\val}{val}

\DeclareMathOperator{\supp}{supp}
\DeclareMathOperator{\sgn}{sign}

\newcommand{\overbar}[1]{\mkern 2.5mu\overline{\mkern-2.5mu#1\mkern-1.5mu}\mkern 1.5mu}

\DeclareTextFontCommand{\bfemph}{\bfseries\em}
\newcommand{\term}{\bfemph}

\newcommand\restr[2]{{\left.\kern-\nulldelimiterspace #1 \right|_{#2}}}

\makeatletter
\newcommand{\oset}[3][0ex]{%
  \mathrel{\mathop{#3}\limits^{
    \vbox to#1{\kern-2\ex@
    \hbox{$\scriptstyle#2$}\vss}}}}
\makeatother

\makeatletter
\newcommand{\uset}[3][0ex]{%
  \mathrel{\mathop{#3}\limits_{
    \vbox to#1{\kern-7\ex@
    \hbox{$\scriptstyle#2$}\vss}}}}
\makeatother
\makeatletter
\newcommand{\customlabel}[2]{%
   \protected@write \@auxout {}{\string \newlabel {#1}{{#2}{\thepage}{#2}{#1}{}} }%
   \hypertarget{#1}{#2}%
}
\makeatother

\usepackage{xcolor}

\definecolor{juliabg}{RGB}{245,245,245}
\definecolor{jlkeyword}{RGB}{0,0,180}
\definecolor{jlcomment}{RGB}{0,128,0}
\definecolor{jlstring}{RGB}{163,21,21}
\definecolor{jlnumber}{RGB}{150,0,0}
\definecolor{jloperator}{RGB}{0,0,0}
\definecolor{jlfunction}{RGB}{120,0,120}

\lstdefinelanguage{Julia}{
  morekeywords={
    using,function,end,if,else,elseif,for,while,return,import,export,
    struct,mutable,where,begin,let,quote,try,catch,finally,macro,do
  },
  sensitive=true,
  morecomment=[l]\#,
  morestring=[b]",
}

\usepackage[indent,skip=.2\baselineskip]{parskip}

\usepackage{fullpage}



\usepackage[perpage]{footmisc}

\begin{document}

\title{
Root bounds of vertical systems\\ using tropical~geometry
}

\author[Elisenda Feliu, Paul Alexander Helminck, Oskar Henriksson, Yue Ren, Benjamin Schröter and Máté L.\, Telek]{Elisenda Feliu, Paul Alexander Helminck, Oskar Henriksson,\\ Yue Ren, Benjamin Schröter and Máté L.~Telek}

\date{May 8, 2026}
 
\begin{abstract}
Sparse polynomial systems with vertical coefficient dependencies arise naturally when describing the critical points of optimization problems and, when augmented with linear forms, the steady states of chemical reaction networks.
Moreover, any polynomial system is the specialization of such a parametrized system.
We prove that the generic number of complex zeros of an augmented vertically parametrized system is the tropical intersection number of a tropical linear space and a classical linear space.
In the special case when the matroid of the tropical linear space is cotransversal, we express this number as a mixed volume.
We also obtain bounds on the maximal number of positive zeros, which is often the significant number in applications. We derive lower bounds from the number of intersections between positive tropicalizations, and when the positive zeros have  toric structure, we provide upper bounds that are simpler and in some cases smaller than the generic root count. 
The resulting algorithms are implemented in Julia. 
\end{abstract}

\maketitle

\section{Introduction}
\noindent A fundamental problem in computational algebraic geometry is to determine the maximal number of (positive, real or complex) zeros that a sparse polynomial system may have, as the coefficients vary over some given parameter space. When the coefficients are assumed to vary independently of each other, this is a classical problem; over the complex numbers, the answer is given by Bernstein's theorem \cite{Bernstein1975}, and over the positive real numbers, several upper and lower bounds are known \cite{Sottile-book,BihanDickensteinGiaroli2020,BihanDickensteinForsgaard2021}.

Sparse systems that arise in applications often have linear dependencies among the coefficients, and for such systems, the problem of finding root bounds is less explored. Examples of previous work include root bounds for synchronization systems via Newton--Okounkov bodies \cite{BBPMZ23} or Lagrangian systems via toric geometry \cite{BreidingSottileWoodcock22,LindbergMoninRose23}.

The systems of interest in this article are square \emph{vertically parametrized systems}, augmented with linear forms, which take the form
\begin{equation}
\label{eq:augmented_vertically_parametrized_intro}
    F=\big( C x^M,\: Lx-b\big)\in\CC[a,b][x^\pm]^n
\end{equation}
for variables $x=(x_1,\ldots,x_n)$ and  parameters $(a,b)=(a_1,\ldots,a_m,b_1,\ldots,b_d)$. The system is defined by three matrices: a parametric coefficient matrix $C\in\CC[a]^{s\times r}$, where each entry is a linear form in the parameters $a$, and each parameter $a_i$ appears in precisely one column (hence the term \emph{vertical}), a coefficient matrix $L\in\CC^{d\times n}$ with $d=n-s$, and a matrix $M\in\ZZ^{n\times r}$ whose columns are the exponent vectors of the monomial vector $x^M$. 

Vertically parametrized systems allow for linear dependencies among coefficients appearing in front of the same monomial, and naturally appear in optimization \cite[Example~1.1]{FeliuHenrikssonPascual2025vertical} and the study of $A$-discriminants \cite{BihanBound,FeliuFerrerTelek2025}. A main application lies in (bio)chemistry, where the positive zeros of augmented vertically parametrized systems (which have real coefficients) are the steady states of reaction networks, as we explain in \Cref{sec:reaction_networks} in more detail. 
Two simple examples of systems appearing in these applications are the following:

\begin{enumerate}[label=(\roman*)]
    \item The critical points in $(\CC^*)^2$ of the polynomial 
$f(x)=a_{1} x_{1}^2 + a_{2} x_{1} x_{2} + a_{3} x_{1}^3 x_{2}^2 + a_{4} x_{1}^3 x_{2}^3$
are the zeros of the vertically parametrized system
\begin{equation}
\label{eq:critical_point_system}
    F= \big(2 a _{1} x _{1}^2  + a _{2} x _{1} x _{2}+  3 a _{3} x _{1}^3 x _{2}^2 + 3 a _{4} x _{1}^3 x _{2}^3 ,\  
 a _{2} x _{1} x _{2}+ 2 a _{3} x _{1}^3 x _{2}^2+3 a _{4} x _{1}^3 x _{2}^3  \big)\,,
\end{equation}
with defining matrices
\[C={\footnotesize\begin{bmatrix}2a_1 & a_2 & 3a_3 & 3a_4\\ 0 & a_2 & 2a_3 & 3a_4 \end{bmatrix}}\,,\qquad M={\footnotesize\begin{bmatrix}2 & 1 & 3 & 3\\ 0 & 1 & 2 & 3 \end{bmatrix}}\,.\]
\item As a running example,  we will use the following augmented vertically parametrized system, which describes the steady states of the phosphorylation network in \eqref{eq:1-site_network}:
\begin{equation}\label{eq:1-site_equations}
\begin{aligned}
F & = \big(
a_{3} x_{5} - a_{4} x_{2} x_{4} + a_{5} x_{6}, \
a_{1} x_{1} x_{3} - a_{2} x_{5} - a_{3} x_{5}, \
a_{4} x_{2} x_{4} - a_{5} x_{6} - a_{6} x_{6},
\\  &  \hspace{5cm}
x_{3} + x_{4} + x_{5} + x_{6} - b_{1}, \
x_{1} + x_{5} - b_{2}, \ 
x_{2} + x_{6} - b_{3}\big)
\end{aligned}
\end{equation}
with defining matrices
\begin{equation}
\label{eq:minimal_presentation_1-site}
C ={\footnotesize \begin{bmatrix}0 & a_{3} & -a_{4} & a_{5}\\  a_{1} & -a_{2}-a_{3} & 0 & 0\\ 0 & 0 & a_{4} & -a_{5}-a_{6}\end{bmatrix}},
\:\:\:
M ={\footnotesize \begin{bmatrix}1 & 0 & 0 & 0\\ 0 & 0 & 1 & 0\\ 1 & 0 & 0 & 0\\ 0 & 0 & 1 & 0\\ 0 & 1 & 0 & 0\\  0 & 0 & 0 & 1\end{bmatrix}},
\:\:\:
L={\footnotesize\begin{bmatrix}0 & 0 & 1 & 1 & 1 & 1\\1 & 0 & 0 & 0 & 1 & 0\\0 & 1 & 0 & 0 & 0 & 1\end{bmatrix}}\, . 
\end{equation}
\end{enumerate}

Our first objective is to determine the maximal (or, equivalently, the generic) number of nondegenerate zeros in $(\CC^*)^n$ for parameter values in $\CC^{m+d}$. For \eqref{eq:critical_point_system}, a simple computation gives that this generic root count is three (which is lower than Bernstein's mixed volume bound, which is five). For \eqref{eq:1-site_equations}, we will later see that the generic root count is also three (which coincides with the mixed volume).

Knowing the generic root count is valuable for several purposes. First of all, it provides an upper bound on the number of nondegenerate zeros for any choice of parameters, which is often of interest in applications.
Second, it is useful for finding all zeros through numerical algebraic geometry: it provides a rigorous stopping condition for monodromy solving \cite{BBCHLS2023}, as well as a certificate that all solutions have been found when combined with interval arithmetic techniques \cite{BreidingRoseTimme23}. Furthermore, since any polynomial system can be seen as a specialization of a vertically parametrized system (see \Cref{rem:any_pol}), the generic root count of this system can be used as a general-purpose upper bound on the number of nondegenerate zeros, which is at most equal to, and often lower than, the commonly used BKK bound. 
In particular, this can be used to compute an upper bound on the degree of any complete intersection variety; see \Cref{cor:generic_degree}.

In this paper, we use \emph{tropical geometry} (reviewed in Sections~\ref{subsec:tropical_varieties}\nobreakdash-\ref{subsec:stable_intersection}) to compute the maximal number of nondegenerate complex zeros exactly.
The idea of counting roots via tropical geometry goes back to \cite{HuberSturmfels95}, which gives a tropical proof of Bernstein's theorem (without explicitly using tropical terminology). This approach has the advantage of also providing homotopies for numerically finding the solutions. 
The first ideas toward extending this perspective to systems with coefficient dependencies were given in \cite{LeykinYu2019} in the case of \emph{horizontally parametrized} systems. 
This was generalized in \cite{HelminckRen22}, which laid some of the foundations for finding root counts using tropical intersection theory, reembeddings, tropical flatness and Berkovich analytifications. 
These foundations were then used to obtain specialized root counts for certain horizontal systems \cite{HoltRen2023} and graph realization systems \cite{ClarkeDewarTrippGreenMaxwellNixonRenSmith2026}. 
The problem of constructing homotopies from tropical intersection data was further explored in \cite{HelminckHenrikssonRen2024}.

Here, we take a new and more elementary approach of computing root bounds using tropical geometry, with the classical transverse intersection theorem as the key theoretical tool. Exploiting this, we provide the generic root count of augmented vertically parametrized systems, as well as a new simplified formula \eqref{eq:purely_vertical_intro}  for the purely vertical case.

One of the key steps in both our and previously developed root counting strategies is to perform a \emph{toric reembedding}, and turn the problem into counting the number of intersections of a toric variety and a linear space. This is a classical strategy in the geometric study of polynomial systems,  e.g., \cite[Chapter 6]{gelfand1994discriminants}, \cite[Section~3.2]{Sottile-book}. In our setting, these varieties are $V(y-x^M)$ and $V(C_ay,Lx-b)$ in $(\CC^\ast)^{r+n}$, for auxiliary variables $y=(y_1,\ldots,y_r)$. To us, they have two advantages:  they are easy to tropicalize, and the vertically parametrized structure guarantees that the tropicalizations intersect transversely for generic Puiseux series parameters. This idea is illustrated in \Cref{fig:root_counting_strategy}. 

\begin{figure}[t]
\centering
\scalebox{1.7}{
\hspace{-0.5cm}
\begin{tikzpicture}
  \node[anchor=south west, inner sep=0] (img) at (0,0) {\includegraphics[width=10cm]{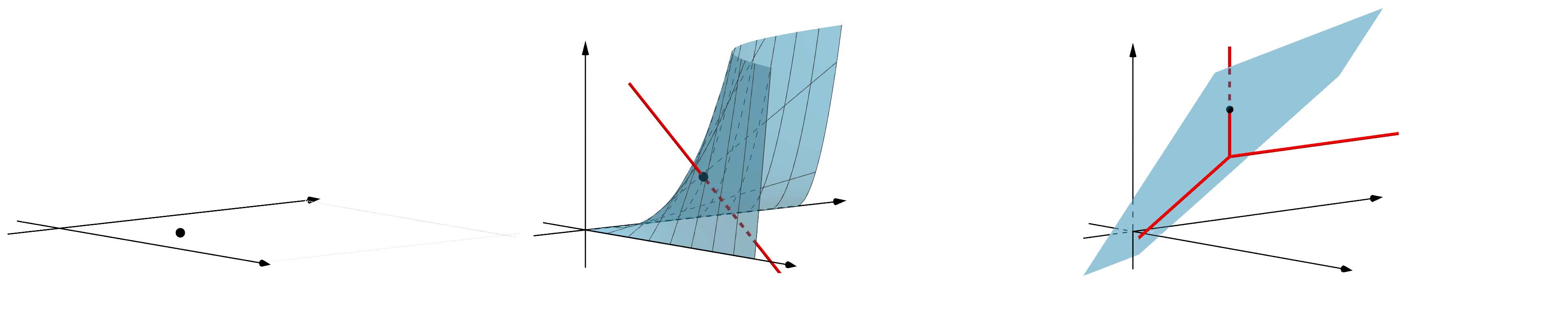}};
  \begin{scope}[x={(img.south east)}, y={(img.north west)}]

    \node at (0.22, 0.26) {\scalebox{0.39}{$\CC^n$}};
    \node at (0.10, 0.45) {\scalebox{0.28}{$V(C_a x^M,Lx-b)$}};
    
    \draw[-{>[scale=0.7]}, inner sep=0.2em] (0.23,0.5) -- (0.35,0.5)  node[midway, above] {\scalebox{0.3}{Reembedding and}} node[midway, below] {\scalebox{0.3}{field extension}};
    
    \node at (0.355, 0.85) {\scalebox{0.35}{$\mathbb{K}^r$}};
    \node at (0.55, 0.26) {\scalebox{0.35}{$\mathbb{K}^n$}};
    \node[color=darkred] at (0.421, 0.81) {\scalebox{0.28}{$V(C_ay,Lx-b)$}};
    \node[color=MidnightBlue] at (0.575, 0.85) {\scalebox{0.28}{$V(y-x^M)$}};
    
    \draw[-{>[scale=0.7]}, inner sep=0.2em] (0.57,0.5) -- (0.68,0.5)  node[midway, above] {\scalebox{0.3}{Tropicalization}};
    
    \node at (0.705, 0.85) {\scalebox{0.35}{${\RR^r}$}};
    \node at (0.90, 0.26) {\scalebox{0.35}{$\RR^n$}};
    \node[color=darkred] at (0.86, 0.48) {\scalebox{0.28}{$\Trop(\langle C_ay,Lx-b\rangle)$}};
    \node[color=MidnightBlue] at (0.91, 0.8) {\scalebox{0.28}{$\Trop(\langle y-x^M\rangle )$}};
  \end{scope}
\end{tikzpicture}\hspace{-4em}
}
\caption{Schematic illustration of our root counting strategy. We first reembed the system as an intersection of a toric variety (blue) and a linear space (red) over the field $\mathbb{K}$ of complex Puiseux series. After tropicalization, roots can be counted using polyhedral geometry, and genericity can be certified by transversality.}
\label{fig:root_counting_strategy}
\end{figure}

\begin{theoremalphabetic}[{\Cref{thm:monomial_reembedding_bounds}(i), \Cref{thm:bound_purevertical}}]
The maximal number of nondegenerate zeros in $(\CC^*)^n$ of \eqref{eq:augmented_vertically_parametrized_intro} is given by the tropical intersection number
\begin{equation}
\label{eq:complex_bound_intro}
\rowspan([\,M \mid \id_n\,])\cdot\Trop(\langle\, C_a y,\, Lx-b\,\rangle)
\end{equation}
in $\RR^{r+n}$, 
for $(a,b)\in\CC^{m}\times\CC^{d}$ chosen according to explicit genericity conditions. When $d=0$, 
the formula 
\eqref{eq:complex_bound_intro} can be simplified to 
\begin{equation}
\label{eq:purely_vertical_intro}
\deg(\phi_M)\cdot\big(\rowspan(M)\cdot \Trop(\langle\,C_a y\,\rangle)\big)\,,
\end{equation}
where $\deg(\phi_M)$ is the degree of the monomial map $\phi_M\colon (\CC^*)^n\to (\CC^*)^r, x \mapsto x^M$.
\end{theoremalphabetic}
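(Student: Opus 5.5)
We plan to follow the strategy of \Cref{fig:root_counting_strategy}. The first step is the reembedding. Consider $X_1=V(y-x^M)$ and $X_2=V(C_a y,\,Lx-b)$ in $(\CC^*)^{r+n}$. The projection $(x,y)\mapsto x$ restricts to a bijection from $X_1\cap X_2$ onto $V(C_ax^M,Lx-b)\subset(\CC^*)^n$. A zero is nondegenerate if and only if the corresponding point of $X_1\cap X_2$ is a transverse intersection point. This follows by comparing the Jacobian of $F$ with the block Jacobian of $(y-x^M, C_ay, Lx-b)$ and eliminating $y$.

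Since the number of nondegenerate zeros is lower semicontinuous, and is generically constant on a Zariski open set of parameters, the maximal count equals the generic count. We may therefore pass to the field $\mathbb{K}$ of Puiseux series and take $(a,b)$ with generic valuations and generic leading coefficients. The generic count over $\CC$ agrees with the count over $\mathbb{K}$, by a standard specialization argument through an algebraic closure of the function field.

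The second step is tropicalization. $X_1$ is a subtorus, so $\Trop(X_1)=\rowspan([\,M\mid\id_n\,])$, a classical linear space with multiplicity $1$. $X_2$ is a linear space over $\mathbb{K}$, so its tropicalization is a tropical linear space in $\RR^{r+n}$. The key step is to show that for generic $(a,b)$ the two tropicalizations meet transversely. The vertical structure is what makes this possible: each $a_i$ appears in exactly one column of $C$. Scaling the column coefficients by $t^{w_j}$ therefore translates $\Trop(\langle C_ay\rangle)$ by an arbitrary vector in the $y$-coordinates. Likewise, generic valuations of $b$ allow shifts affecting the $x$-part.

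Concretely, we will show that the group of shifts available from valuations of $(a,b)$, together with the lineality of $\rowspan([M\mid\id_n])$, spans $\RR^{r+n}$. Then for a generic shift the two balanced complexes, of complementary dimension, meet transversely in relative interiors of maximal cells. For this we will use the standard lemma that for generic translations, a pure complex and a linear space intersect properly and transversely. We must also check that the dimensions are complementary, namely $\dim X_2=r+n-s-d=r$ and $\dim X_1=n$. Once transversality holds, the transverse intersection theorem, assumed earlier, equates the number of points of $X_1\cap X_2$ over $\mathbb{K}$, counted with multiplicity, with the tropical intersection number \eqref{eq:complex_bound_intro}. Transversality also gives multiplicity one for the classical points, so these are exactly the nondegenerate zeros.

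I expect this transversality and genericity step to be the main obstacle. The difficulty is that $b$ only shifts in a $d$-dimensional direction, so the spanning argument must genuinely combine the $y$-shifts with $\rowspan([M\mid\id_n])$. I would first try to show that every vector $(u,v)\in\RR^{r+n}$ can be written as $(Mw',w')+(\text{shift})$: take $w'=v$, and absorb $u-M^\top v$ using the column scalings.

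For the case $d=0$, the variety is $X_2=V(C_ay)\times(\CC^*)^n$, so its tropicalization is $\Trop(\langle C_ay\rangle)\times\RR^n$. The intersection with $\rowspan([M\mid\id_n])$ is then the graph of $v\mapsto M^\top v$ over the preimage of $\Trop(\langle C_ay\rangle)$. Projecting to the $y$-coordinates identifies the intersection with $\rowspan(M)\cap\Trop(\langle C_ay\rangle)$. Each local multiplicity then picks up a lattice index factor $[\ZZ^r\cap\rowspan(M):M^\top\ZZ^n]$, taken up to saturation together with the kernel contribution. This index equals $\deg(\phi_M)$ whenever $\phi_M$ is generically finite, which is necessary for finitely many zeros. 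Alternatively, one can argue classically: zeros $x$ correspond to points $y\in V(C_ay)\cap\im\phi_M$, each with $\deg(\phi_M)$ preimages. This yields \eqref{eq:purely_vertical_intro} after applying the first part to the subtorus $\im\phi_M$.
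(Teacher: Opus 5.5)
\paragraph{The gap: nondegeneracy of the zeros.}
The step that does not go through as written is ``Transversality also gives multiplicity one for the classical points.'' The transverse intersection theorem, combined with the zero-dimensional count, only tells you one thing: the tropical intersection number equals the number of zeros of $I^{\bin}+I^{\lin}_p$ over $\mathbb{K}$ counted \emph{with multiplicity}. Transversality of the tropicalizations does not by itself make those zeros simple. Here is a counterexample with prime ideals. Take $\langle (x_1-1)^2-t(x_2-1)\rangle$ and $\langle x_2-1\rangle$. Their tropicalizations meet transversely at the origin: the cell $\{w_1=0,\ w_2>-1\}$ of multiplicity $2$ crosses $\{w_2=0\}$, giving intersection multiplicity $2$. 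Yet the only common zero, $(1,1)$, is a degenerate double point. Without nondegeneracy, your argument gives only $\grc(F)\le \rowspan([\,M\mid\id_n\,])\cdot\Trop(I^{\lin})$, not equality. This matters because the statement counts \emph{nondegenerate} zeros.

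\paragraph{How to close it.}
The paper closes this with an external input. It uses the fact that augmented vertically parametrized systems have generically nondegenerate zeros \cite[Theorem~3.7]{FeliuHenrikssonPascual2025vertical}. So there is a Zariski open $\mathcal{V}\subseteq\mathbb{K}^{m+d}$ on which all zeros are nondegenerate and there are exactly $\grc(F)$ of them. Since $\mu(\mathcal{B})$, with $\mu(a,b)=(a\star\eta(t^h),b)$, is Zariski dense, one can choose the Puiseux parameter in $\mu(\mathcal{B})\cap\mathcal{V}$; see \Cref{lem:technical}(i).

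Alternatively, you could try to prove simplicity directly in this setting. That would use the special feature that every cell of both tropical varieties has multiplicity one. The argument would run:
\begin{itemize}
\item at each intersection point $w$, the initial varieties are translates of subtori meeting transversally;
\item $\mathrm{in}_w(I^{\bin}+I^{\lin}_p)=\mathrm{in}_w(I^{\bin})+\mathrm{in}_w(I^{\lin}_p)$ is therefore radical;
\item every initial point lifts, so each zero with valuation $w$ is simple.
\end{itemize}
None of this is in your sketch, and ``transversality'' alone does not supply it.

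\paragraph{What is otherwise fine, and minor fixes.}
The rest of your plan matches the paper. Your observation that $h\mapsto (h,\mathbf{0}_n)$ maps $\RR^r$ isomorphically onto $\RR^{r+n}/\rowspan([\,M\mid\id_n\,])$ is exactly the content of \Cref{lem:generic_transversality}. Your $d=0$ argument also works: either count the $\deg(\phi_M)$ fibers and redo the argument with $I(\im\phi_M)$ in $\RR^r$, or use the index $[\ZZ^r\cap\rowspan(M):M^\top\ZZ^n]=\deg(\phi_M)$.

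Three small corrections:
\begin{itemize}
\item Drop the $b$-shifts. Non-uniform rescaling of $b$ deforms $\Trop(\langle Lx-b\rangle)$ rather than translating it, and you do not need it.
\item $(Mw',w')$ should read $(M^\top w',w')$.
\item To identify the count for one generic shift with the stable intersection number, use that $\Trop(I^{\lin})$ is a fan. The count along $\varepsilon h$ is then independent of $\varepsilon>0$, as in \Cref{lem:technical}(ii).
\end{itemize}
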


Our second objective is to obtain lower bounds on the maximal possible number of nondegenerate positive zeros when the system has real coefficients. It turns out that this can be achieved by restricting the tropicalizations in \eqref{eq:complex_bound_intro} to the associated \emph{positive tropicalization}, in the sense of \cite{Alessandrini2007,JellScheidererYu}. We use the positive analog of the transverse intersection theorem from \cite{RoseTelek24}, to obtain the following result.

\begin{theoremalphabetic}[{\Cref{thm:monomial_reembedding_bounds}(ii)}]
If \eqref{eq:augmented_vertically_parametrized_intro} has real coefficients, then
the maximal number of nondegenerate positive zeros  for $(a,b)\in \RR^m_{>0}\times \RR^d$ is at least 
the number of points in the intersection
\[\big(\rowspan([\,M\mid\id_n\,])+(h,\mathbf{0}_{n})\big)\cap \Trop^+(\langle C_a y,Lx-b\rangle)\]
for any 
$(a,b)\in \RR^m_{>0}\times \RR^d$ and $h\in\RR^r$ chosen according to explicit genericity conditions. 
\end{theoremalphabetic}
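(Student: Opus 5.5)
The plan is to pass to Puiseux series parameters, count positive solutions there via a positive transverse intersection theorem, and then specialize the parameter $t$ back to a small real number.

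\emph{Step 1: reembedding and the torus action.} Positive solutions $x\in\RR^n_{>0}$ of $F_{a,b}$ correspond bijectively to positive points $(y,x)$ of the intersection
\[
V(y-x^M)\cap V(C_ay,\,Lx-b)\subseteq(\CC^*)^{r+n},
\]
via $y=x^M$. Nondegeneracy of a zero of $F$ is equivalent to transversality of the two varieties at the corresponding point. Next, observe that replacing $a$ by $a\cdot t^{h}$ is harmless: each parameter $a_i$ lies in a single column $j$, so scaling all parameters of column $j$ by $t^{h_j}$ amounts to rescaling $y_j$. The vertical structure therefore lets us absorb the shift $(h,\mathbf 0_n)$ into the coefficients. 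Concretely, we work over the field $\RRt$ of real Puiseux series, with parameters $a_i t^{h_{j(i)}}$ and $b$ constant. Equivalently, we intersect the translated toric variety $V(y-t^{h}x^M)$ with the fixed linear space $V(C_ay,Lx-b)$, whose positive tropicalization is $\Trop^+(\langle C_ay,Lx-b\rangle)$. The torus $V(y-t^hx^M)$ has positive tropicalization equal to the full affine space $\rowspan([M\mid\id_n])+(h,\mathbf 0_n)$, and every point of it lifts to positive points.

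\emph{Step 2: positive transverse intersection.} Choose $h$ generic, so that the affine space meets $\Trop^+(\langle C_ay,Lx-b\rangle)$ only in relative interiors of maximal cones and transversely. Since $\dim\rowspan([M\mid\id_n])=n$ and the linear space has codimension $s+d=n$, the intersection is zero-dimensional. At each intersection point $w$, apply the positive transverse intersection theorem of \cite{RoseTelek24}. This produces a positive $\RRt$-point of the intersection with valuation $w$, and it is nondegenerate because the initial systems intersect transversely. For this we need the initial forms at $w$ to be nondegenerate, which is where the genericity condition on $(a,b)$ enters: we require the relevant minors of $C_a$ and $L$ to be nonzero. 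Distinct $w$ give distinct solutions, so the number of positive nondegenerate solutions over $\RRt$ is at least the number of intersection points.

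\emph{Step 3: specialization.} The system over $\RRt$ has coefficients that are real algebraic functions of $t$, here simply monomials $t^{h_j}$. The nondegenerate positive solutions are convergent Puiseux series by standard arguments (the implicit function theorem, or the fact that $\RRt$ is real closed together with the Tarski transfer principle). For small real $t>0$ they therefore specialize to distinct, nondegenerate, positive real solutions of $F$ with parameters $(a_it^{h_{j(i)}},b)\in\RR^m_{>0}\times\RR^d$. The transfer principle gives the cleanest route: the first-order statement ``there exist $k$ distinct nondegenerate positive solutions'' holds in $\RRt$, hence it holds for some real parameter in the positive orthant.

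The main obstacle is Step 2. It requires verifying that the hypotheses of the positive transverse intersection theorem hold. In particular, we must check that every point of the positive tropical linear space in the intersection actually lifts to positive points with the correct sign pattern, and that the explicit genericity conditions guarantee transversality of the initial varieties. I would handle this by describing $\Trop^+$ of the linear space through the positive Bergman fan of its (oriented) matroid and checking local multiplicity one there.
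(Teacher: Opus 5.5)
Your outline is correct and shares the paper's skeleton: toric reembedding, absorbing $(h,\mathbf{0}_n)$ into the parameters via $a\mapsto a\star\eta(t^h)$ as in \Cref{lem:nu}, Rose--Telek's positive transverse intersection theorem, and a transfer principle. It differs from the paper in how nondegeneracy is obtained.

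The paper never looks at initial forms. It uses the Euclidean open set $\mathcal{B}'=\mathcal{B}^{\pm}_{(a^*,b^*)}\cap(\RR^m_{>0}\times L(\RR^n_{>0}))$, on which the oriented matroid, and hence $\Trop^+(I^{\lin}_{a,b})$, is constant. It moves to nearby parameters whose rescaling $\mu(a,b)$ lies in the Zariski open locus $\mathcal{V}$ where all zeros are nondegenerate. It then applies \Cref{thm:RealTropLowerBounds} as a black box, passing through $\RR\{t\}$ and evaluating at small $t>0$.

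You instead keep $(a^*,b^*)$ fixed, prove nondegeneracy locally at each intersection point, and transfer from $\RRt$ to $\RR$ with an existential sentence over the parameters. This is more hands-on, but it gives the bound along $a^*\star\eta(t^h)$ itself rather than only for generic nearby parameters. The direct transfer also shortens the last step for the inequality on $\mrc_{>0}(F)$.

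For the local step to be complete, you need the following:
\begin{itemize}
\item At $w$ in relative interiors of maximal cells, $V(\mathrm{in}_w I^{\lin})$ is a single torus coset, because the initial matroid is a direct sum of rank-one matroids.
\item The initial variety of the binomial part is the torus $\{(x^M,x)\}$.
\item Tropical transversality makes these two cosets meet transversally at every common point.
\item For generators whose $w$-initial forms generate the initial ideals, the suitably $t$-rescaled Jacobian at any zero with valuation $w$ reduces to the invertible Jacobian of the initial system.
\end{itemize}

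This nondegeneracy comes from the choice of $h$, not from nonvanishing minors of $C_a$ and $L$ as you suggest. So you should require transversality with the full $\Trop(\langle C_ay,Lx-b\rangle)$, not only with $\Trop^+$; Rose--Telek's theorem also needs this. Membership of $(a^*,b^*)$ in the generic matroid locus only ensures that the specialized tropical linear space is the generic one, of the right dimension. Finally, the multiplicity-one condition you list as an obstacle is automatic by \Cref{thm:TropLinearSpace} and \Cref{lem:binomial}.
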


Proving these bounds and building up the necessary background is the subject of \Cref{sec:tropical_bounds}. 
In particular, as we aim at a broad audience including people working on applications, we take an expository approach and explain the basic objects and results we need from tropical geometry.

After this, \Cref{sec:improvements} is devoted to two special structures that enable computational speedups, and which often  arise in reaction network theory:
\begin{itemize}
    \item \textbf{Cotransversality}: If the matroids of the matrices $C_a$ and $[\,L\mid -b\,]$ for generic parameters are \emph{cotransversal}, one can apply a linear transformation to the system \eqref{eq:augmented_vertically_parametrized_intro} such that the maximal number of nondegenerate complex zeros is the mixed volume of the resulting system. This is the content of \Cref{thm:cotransversal}.
    \item \textbf{Toricity:} If the positive zeros of the vertical part $C x^M$ of \eqref{eq:augmented_vertically_parametrized_intro} have \emph{toric structure} in the sense of \cite{FeliuHenriksson2024}, the maximal number of nondegenerate positive zeros is bounded from above and below by tropical intersection numbers in $\RR^n$ as in \Cref{thm:toric_root_bound}.
\end{itemize}

\medskip

In \Cref{sec:reaction_networks}, we discuss the application to reaction networks, recall the notion of \emph{steady state degree}, and show that for a well-studied family of multi-site phosphorylation networks, the steady state degree can be computed via a mixed volume computation.
Finally, in \Cref{sec:implementation},  we introduce our Julia package \texttt{VerticalRootCounts.jl}, built on the computer algebra system \texttt{OSCAR} \cite{OSCAR-book}, which provides an implementation of the bounds presented in the paper.

\subsection*{Acknowledgments}
EF and OH were supported by the European Union under the Grant Agreement No.\ 101044561, POSALG\footnote{\scriptsize Views and opinions expressed are those of the
authors only and do not necessarily reflect those of the European Union or European Research
Council (ERC). Neither the European Union nor ERC can be held responsible for them.}. 
PH was supported by the JSPS Postdoctoral Fellowship with ID No.\ 23769 and KAKENHI 23KF0187 as a Postdoctoral Fellow at Tohoku University and the University of Tsukuba. 
YR is supported by UKRI grant MR/Y003888/1 and EPSRC grant EP/Y028872/1.
BS is supported by the Swedish Research Council grant 2022-04224.  MLT was partially supported by the project 
101183111-DSYREKI-HORIZON-MSCA2023-SE-01. 

\section{Parametrized polynomial systems}
\label{sec:networks_and_vertical_systems}

\subsection{Preliminaries}\label{subsec:preliminaries}
In this section, we gather basic notions we use throughout the paper.

\smallskip
\paragraph{\bf Notation. }
For a field $\FF$, we let $\FF^*\coloneqq \FF \setminus \{0\}$. For a set $S$, we denote its cardinality by $\#S$. 
We let $[n]= \{1,\dots,n\}$.  For a vector $v\in \FF^n$, the support is denoted by $\supp(v)\coloneqq \{i\in [n] \mid v_i\neq 0\}$, and the diagonal matrix with the entries of $v$ on the diagonal is denoted by $\diag(v)$. We let $\star$ denote the Hadamard product or componentwise product of two vectors. 
For a $k\times n$ matrix $N$ and a subset $J \subseteq [n]$, we write $N_J$ for the submatrix of $N$ given by the columns with index in $J$. We denote by $\id_n$ the identity matrix of size $n$, and $\mathbf{0}_n$ the zero vector with $n$ entries.

\smallskip
\paragraph{\bf Monomial maps. }
For $h\in \QQ^r$ and $t$ in some field, we let $t^h=(t^{h_1},\dots,t^{h_r})$. 
For a matrix $M=(m_{ij}) \in \ZZ^{n\times r}$ and $x\in (\CC^*)^n$, we let $x^M$ be the vector whose $j$-th entry equals $\prod_{i=1}^n x_i^{m_{ij}}$. We consider the monomial map
\[ \phi_M\colon (\CC^*)^n \rightarrow (\CC^*)^r, \quad x\mapsto x^M\] 
and when $M$ has full row rank, we let $\deg(\phi_M)$ denote its  degree.
The (very affine) toric variety associated with $M$ is   $\im(\phi_M)$. For a vector $c\in (\CC^*)^r$, the corresponding scaled toric variety is 
\[c\star\im(\phi_M)=\{ c\star x^M \mid x\in (\CC^*)^n\}\,.\]
The dimension of $c\star\im(\phi_M)$ is $\rank(M)$.
We also consider the positive toric variety 
\[\mathcal{T}_M\coloneqq\im(\phi_M)\cap\RR^r_{>0}=\phi_M(\RR^n_{>0})\,,\]
and its scaled analog $c\star\mathcal{T}_M$ for $c\in\RR^r_{>0}$.  This set is Zariski dense in the complex analog, in the sense that
\[I(c\star \mathcal{T}_M)=I(c\star\im(\phi_M))\,,\]
where $I(\cdot)$ denotes the vanishing ideal in the Laurent polynomial ring $\CC[y_1^\pm,\ldots,y_r^\pm]$. This is a binomial ideal, generated by $y^u-c^u$ for $u$ in any basis of the integer kernel $\ker_{\ZZ}(M)$, and in particular it is a complete intersection ideal.

\smallskip
\paragraph{\bf Zeros of polynomial systems. }
For a polynomial system $F$ over $\CC$, 
we denote by $V_{\CC^*}(F)$   the variety of zeros  over the complex torus. If $F$ has real coefficients,  $V_{>0}(F)$ denotes the set of zeros of $F$ with entries in $\RR_{>0}$. 

A zero $x^*$ of a polynomial system $F$ is \term{nondegenerate} if the Jacobian matrix of $F$ at $x^*$ has full row rank. Analogously, for a complete intersection polynomial ideal $I$, we define a zero to be nondegenerate if it is nondegenerate with respect to any minimal generating set $F$ of $I$.

\smallskip
\paragraph{\bf Specialization of parametrized polynomials. }
For parametrized polynomial systems $F\in\CC[p][x^\pm]^s$
with variables $x=(x_1,\dots,x_n)$ and parameters $p=(p_1,\dots,p_k)$,  we denote by $F_p$ the specialization of $F$ at the point $p\in \CC^k$. Similarly, for an ideal $I \subseteq \mathbb{C}[p][x^\pm]$ or a matrix~$T\in \mathbb{C}[p]^{s\times r}$, we denote their specializations at $p$ by $I_p$ and $T_p$, respectively.

\smallskip
\paragraph{\bf Generic root counts and nondegeneracy. }
For a square parametrized polynomial system $F\in\CC[p][x^\pm]^n$ with variables $x=(x_1,\dots,x_n)$ and parameters $p=(p_1,\dots,p_k)$, it holds that the specialization $F_p$ has at most finitely many nondegenerate zeros for each $p\in\CC^k$. Furthermore, the parameter continuation theorem (see, e.g., \cite[Theorem~7.1.1]{SommeseWampler05} or \cite{BorovikBreiding2025}) tells us that there is a nonempty Zariski open subset $\mathcal{V}\subseteq\CC^{k}$
of parameter values for which the number of nondegenerate zeros is constant.
We call the generic number of nondegenerate zeros the \term{generic root count} of the system, and denote it  $\grc(F)$. Alternatively, it is characterized by
\begin{equation*}\label{eq:grc}
\grc(F)=\max_{ p\in\CC^k}\ \#\big\{x\in (\CC^*)^n \mid x  \text{ is a nondegenerate zero of $F_p$}\big\}\,.
\end{equation*}
For parametrized complete intersection ideals $I\subseteq\CC[a][x^\pm]$ with generically finite zero sets, we analogously define $\grc(I)$ as $\grc(F)$ for any minimal generating set $F$ of $I$.

We will be concerned with square parametric systems $F\in\CC[p][x^\pm]^n$ with \term{generically nondegenerate zeros}, in the sense that there is a Zariski open subset $\mathcal{W}\subseteq\CC^k$ such that all zeros of $F_p$ are nondegenerate for $p\in\mathcal{W}$. 
For instance, this holds for all systems $F$ such that the incidence variety $\{(p,x)\in\CC^k\times(\CC^*)^n\mid F_p(x)=0\}$ is irreducible and $F$ has at least one nondegenerate zero for some choice of parameters \cite[Theorem~2.15]{FeliuHenrikssonPascual2025vertical}. 

If $F$ has generically nondegenerate zeros, then $\grc(F)$ is also the generic (or equivalently, the maximal) number of isolated points of $V_{\CC^*}(F_p)$ for $p\in\CC^k$, and our notion of generic root count agrees with that in \cite{HelminckRen22}. Furthermore, for any $p\in\CC^k$ such that $\# V_{\CC^*}(F_p)=\grc(F)$, it holds that all zeros of $F_p$ are nondegenerate.

Note that the properties of generic root counts and generic nondegeneracy are preserved under field extension; if $F\in\CC[p][x^\pm]^n$ has generically nondegenerate zeros, then for any algebraically closed field extension $\CC\subseteq \mathbb{F}$, there exists a Zariski open dense set $\mathcal{V}\subseteq \mathbb{F}^k$ such that for $p\in\mathcal{V}$, the specialized system $F_p$ has $\grc(F)$ many zeros, all of which are nondegenerate.

\begin{remark}
\label{rmk:not_bound_of_isolated_solutions}
When $F$ has real coefficients, the generic root count does not necessarily give an upper bound on the number of \emph{isolated} real zeros; it only bounds the number of \emph{nondegenerate} zeros.
Inspired by \cite[Example~1.8]{bezout:real}, we may consider the
parametric system 
\begin{align*}
    F=&  \big( a_1 x_{1}^{6}+ a_2 x_{2}^{6}+ a_3 x_{1}^{5}+a_4 x_{2}^{5}+a_5 x_{1}^{4}+a_6 x_{2}^{4}+ a_7 x_{1}^{3}+ a_8 x_{2}^{3}+ a_9 x_{1}^{2}+a_{10} x_{2}^{2}\\ & \hspace{2cm} +a_{11}\, x_{1}+a_{12}\, x_{2}+a_{13}, \  a_{14}\,x_2+a_{15}\,x_3 + a_{16}, \ a_{17}\,x_2+a_{18}\,x_3 + a_{19}\big)\,.
\end{align*}
The generic root count is six, but specializing to 
\[ a^*=(1,1,-12,-12,58,58,-144,-144,193,193,-132,-132,72,1,1,1,1,1,1)\]
gives
\[F_{a^*}=\Big((x_1-1)^2(x_1-2)^2(x_1-3)^2 + (x_2-1)^2(x_2-2)^2(x_2-3)^2, \ x_2+x_3+1, \ x_2+x_3+1\Big)\,,\]
which has infinitely many complex solutions and nine real solutions (all of which are degenerate). Hence, the number of isolated real solutions exceeds the generic root count.
\end{remark}

\subsection{Augmented vertically parametrized systems}
\label{subsec:root_counts_and_augmented_vertical_systems}
The parametrized systems that we consider in this article are \term{(linearly) augmented vertically parametrized systems}, which were introduced and studied in \cite{FeliuHenrikssonPascual2025vertical}. These are systems that can be written in the form
 \begin{align}
    \label{eq:vertical}
     F = (C x^M, 
     \: L x - b)\, \in \CC[a,b][x^\pm]^n
    \end{align}
    with 
    \begin{itemize}
        \item a parametric coefficient matrix $C\in \CC[a]^{s\times r}$ that is \term{vertical}, in the sense that each entry is either zero or a homogeneous linear form in the parameters $a=(a_1,\dots,a_m)$, and each $a_i$ appears in precisely one column of $C$; we furthermore require $C$ to 
        generically 
        be of full row rank;  
        \item an exponent matrix $M\in \ZZ^{n\times r}$;
        \item a fixed coefficient matrix $L\in \CC^{d\times n}$ of full row rank and parametric constant terms $b=(b_1,\dots,b_d)$. 
         \end{itemize}
The systems with empty $L$  (i.e., $d=0$)  were introduced in \cite{HelminckRen22} and are simply said to be (purely) \term{vertically parametrized}.

It was proven in \cite[Theorem~3.7]{FeliuHenrikssonPascual2025vertical} that augmented vertically parametrized systems have generically radical ideals and nondegenerate zeros. Hence, it follows from \Cref{subsec:preliminaries} that there exists a nonempty Zariski open subset   $\mathcal{V}\subseteq\CC^m\times\CC^d$ such that for $(a,b)\in \mathcal{V}$, the specialized system $F_{a,b}$ has exactly $\grc(F)$ many zeros, all of which are nondegenerate.

If $F\in \RR[a,b][x^\pm]^n$ (i.e., the coefficients are real), we also consider the number of nondegenerate zeros in $\RR^n_{>0}$, and define the \term{maximal positive root count} as
\begin{equation*}\label{eq:max}
\mrc_{>0}(F) \coloneqq \max_{(a,b)\in \RR^m_{>0}\times L(\RR^n_{>0})}\ \#\big\{ 
x\in \RR^n_{>0} \mid x  \text{ is a nondegenerate zero of $F_{a,b}$}\big\}\, . 
\end{equation*}
It is immediate from these definitions that $\grc(F)\geq\mrc_{>0}(F)$.
Restricting to positive $a\in\RR^m_{>0}$ allows us to study systems with \emph{signed support}, as described in \cite[Corollary 3.17]{FeliuHenrikssonPascual2025vertical} (see also \cite[Remark~2.3]{FeliuHenriksson2024} and \cite{FeliuFerrerTelek2025}). 
The augmented vertically parametrized systems arising in our motivating application in reaction network theory also require $a$  to be positive (see \Cref{sec:reaction_networks}). The restriction to $b\in L(\RR^n_{>0})$ comes from the fact that $F_{a,b}$ lacks positive zeros outside this locus.
 
For any augmented vertically parametrized system $F$, there is a unique (up to reordering) choice of $C$ and $M$ such that $M$ has no repeated columns. We call this pair $(C,M)$ the \term{minimal presentation} of $F$. 
In general, we do not impose that the columns of $M$ are different, and hence different pairs $(C,M)$ may exist, as the next example illustrates.

\begin{example}[Running example]
\label{ex:running_matrices}
For the running example \eqref{eq:1-site_equations}, the minimal presentation was given in \eqref{eq:minimal_presentation_1-site}. 
We might also express $F$ with the following pair of matrices, where now each column of $C$ depends exactly on one parameter and hence $M$ has repeated columns: 
\begin{equation}\label{eq:matrices_vertical}
C={\small\begin{bmatrix} 
0 & 0 & a_3 & -a_4 & a_5 & 0 \\ a_1 & -a_2 & -a_3 & 0 & 0 & 0 \\ 0 & 0 & 0 & a_4 & -a_5 & -a_6\end{bmatrix}},\qquad
M={\small \begin{bmatrix}1 & 0 & 0 & 0 & 0 & 0 \\ 0 & 0 & 0 & 1 & 0 & 0 \\
1 & 0 & 0 & 0 & 0 & 0 \\ 0 & 0 & 0 & 1 & 0 & 0 \\0 & 1 & 1 & 0 & 0 & 0 \\ 0 & 0 & 0 & 0 & 1 & 1 \end{bmatrix}}\,. \qedhere
\end{equation}
\end{example}

The vertical part $C x^M$ of any augmented vertically parametrized system $F$ can be written in such a way that $C$ has $m$ columns and each column depends on only one parameter, as we had in  
 \eqref{eq:matrices_vertical}. This leads to an expression of the form 
\begin{equation}\label{eq:parameter_separating}
 C x^M = \overbar{C}\diag(a) \, x^{\overbar{M}}=\overbar{C}(a\star  x^{\overbar{M}}) 
\end{equation}
for some $\overbar{C}\in \CC^{s\times m}$ without zero columns and $\overbar{M}\in \ZZ^{n\times m}$.
We refer to the pair $(\overbar{C}\diag(a),\overbar{M})$ as the \term{parameter-separating presentation} of $F$, which is unique up to reordering of the columns. 
This presentation is convenient for some purposes, as it allows us to specify $F$ through two parameter-free matrices $\overbar{C}$ and $\overbar{M}$. For instance, 
it was proven in \cite[Theorem~3.7]{FeliuHenrikssonPascual2025vertical}  that $\grc(F)$ is zero if and only if 
\begin{equation}
\label{eq:rank_condition}
    \rank \begin{pmatrix}
         \overbar{C}  \diag(w) \overbar{M}^\top  \diag(h) \\ L
     \end{pmatrix}<n\quad\text{for all $(w,h)\in \ker(\overbar{C}) \times (\CC^*)^n$}\,.
\end{equation}

\begin{remark}\label{rem:any_pol}
Any  $n$-variate polynomial system $G\in \CC[x^\pm]^n$ can be seen as the specialization of a vertically parametrized system: writing $G=Px^B$ for $B\in \ZZ^{n\times m}$ and $P\in \CC^{n\times m}$, we have $G=F_{a^*}$ for the vertically parametrized system $F=P (a \star x^B)$ with $a=(a_1,\dots,a_m)$ specialized at the parameter $a^*=(1,\dots,1)$. Therefore, the number of nondegenerate zeros of $G$ in $(\CC^*)^n$ is at most $\grc(F)$ and the number of nondegenerate positive zeros of $G$ is at most $\mrc_{>0}(F)$. 
\end{remark}

\subsection{The toric reembedding}\label{subsec:reembedding}
Consider an augmented vertically parametrized system $F=(Cx^M, Lx-b)$ as in \eqref{eq:vertical}. 
Its \term{toric reembedding} is the system
\begin{align}
\label{eq:monomial_reembedding}
   \widehat{F} \coloneqq \big( y -  x^{M} , C y  ,   Lx - b \big) \in \CC[a,b][y^\pm,x^\pm]^{r+n} \, 
\end{align}
with $y=(y_1,\dots,y_r)$, which can be 
decomposed into a linear and a binomial part
\begin{equation} \label{eq:monomial_linbin}
I^{\bin}\coloneqq \langle y - x^{M}  \rangle \subseteq \CC[y^\pm,x^\pm]\,, \qquad  I^{\lin} \coloneqq \langle C y  , Lx - b  \rangle \subseteq \CC[a,b][y^\pm,x^\pm]  \,.
\end{equation}
A simple computation shows that for fixed $(a,b)\in \CC^{m}\times\CC^d$, the rings $\CC[y^\pm,x^\pm]/\langle \widehat{F}_{a,b}\rangle$ and $\CC[x^\pm]/\langle F_{a,b}\rangle$ are isomorphic, and we have a multiplicity-preserving  bijection 
\begin{align}\label{eq:monomial_correspondence}
V_{\CC^*}(F_{a,b}) \to  V_{\CC^*}(I^{\bin}+I^{\lin}_{a,b}),\qquad x \mapsto  (x^M,x)\,.
\end{align}
Thus, to study nondegenerate zeros of an augmented vertically parametrized system, it is enough to understand the nondegenerate zeros of 
$I^{\bin}+I^{\lin}_{a,b}$. 
This is a crucial observation for the root counting technique presented in \Cref{subsec:root_bounds}.

\begin{example}
For the system \eqref{eq:1-site_equations} with the minimal presentation \eqref{eq:minimal_presentation_1-site}, the binomial and linear parts of the toric reembedding are given by
\begin{align*}
I^{\bin}&=\langle y_1 - x_1x_3, \  y_2 - x_5,\   y_3 - x_2x_4, \  y_4 - x_6\rangle,\\
I^{\lin}&=\langle a_3y_2 - a_4y_3 + a_5y_4, \ 
a_1y_1 -(a_2+a_3)y_2, a_4y_3 - (a_5+a_6)y_4, \\ & \hspace{3cm} 
x_{3} + x_{4} + x_{5} + x_{6} - b_{1},\    x_{1} + x_{5} - b_{2},   x_{2} + x_{6} - b_{3}\rangle\,. \qedhere
\end{align*}
\end{example}

\section{Bounds via tropical geometry}
\label{sec:tropical_bounds}

\subsection{Tropical varieties}\label{subsec:tropical_varieties}
In this subsection, we give an overview of the key concepts from tropical geometry that are required for our root counting techniques. 
Our first step will be to extend the base field  from $\CC$ to the field of \term{complex Puiseux series}, defined as
\begin{equation*}
\label{eq:puiseuxSeriesDefinition}
\mathbb{K}\coloneqq \CCt= \Big\{\sum_{i=k}^\infty c_{i/N} \,  t^{i/N} \mid k\in\ZZ,\;\; N\in\ZZ_{>0},\;\; c_{i/N} \in \CC,\;\; c_{k/N} \neq 0 \Big\}\cup\{0\}\,,
\end{equation*}
for which we have the valuation map
\begin{equation*}
\val\colon\quad \mathbb{K}^\ast\rightarrow\RR\, ,\qquad \sum_{i=k}^\infty c_{i/N}\, t^{i/N}\mapsto \ k/N \,,
\end{equation*}
which we also extend to vectors $z\in (\mathbb{K}^\ast)^n$ by setting $\val(z)=(\val(z_1),\dots,\val(z_n))$.

Let $\mathbb{K}[x^{\pm}]$ be the Laurent polynomial ring over $\mathbb{K}$ in $n$ variables. 
We write $V(I)$ for the set of zeros in $(\mathbb{K}^*)^n$ for an ideal $I\subseteq \mathbb{K}[x^\pm]$. For ideals $I\subseteq \CC[x^\pm]$, we implicitly consider the  extension to $\mathbb{K}[x^\pm]$.
Note that $V(I)$ and 
$V_{\CC^*}(I)$
coincide if $I\subseteq \CC[x^\pm]$ is zero-dimensional. 

The \term{tropical variety} of an ideal  $I\subseteq \mathbb{K}[x^\pm]$ is defined as the set
\begin{equation}
\label{eq:tropicalization}
\Trop(I)\coloneqq \overline{\left\{ \val(z) \mid z\in V(I) \right\}}\subseteq\RR^n,
\end{equation}
where the closure is taken with respect to the Euclidean topology on $\RR^n$. 
This set is equipped with the structure of a \emph{polyhedral complex} described in \cite[Section~3.2]{MaclaganSturmfels15}, which we denote by $\Trop(I)$ as well. This complex has the property that its support (i.e.,  the union of all its cells) is the right-hand side of \eqref{eq:tropicalization}, and all points $w$ that belong to the same cell determine the same initial ideal of $I$ \cite[Theorem 3.2.5, Proposition 3.2.8]{MaclaganSturmfels15}. 

When $I$ is pure-dimensional, \cite[Theorem~3.3.5,Theorem~3.4.14]{MaclaganSturmfels15} give that the dimension of $\Trop(I)$ as a polyhedral complex coincides with the Krull dimension of the ideal $I$. Additionally, $\Trop(I)$ has the structure of a \emph{weighted balanced polyhedral complex}, in the sense that each inclusion-maximal polyhedron $\sigma\in \Trop(I)$ has a  \emph{multiplicity} $\mult(\sigma)\in\ZZ_{>0}$, defined in \cite[Definition~3.4.3]{MaclaganSturmfels15}, that satisfies a technical balancing condition  from \cite[Definition~3.3.1]{MaclaganSturmfels15}.
Further details can be found in \cite[Section~3.2--3.4]{MaclaganSturmfels15}.

The following fundamental property, which follows from \cite[Proposition~3.4.8]{MaclaganSturmfels15}, allows us to use tropical geometry for counting zeros of zero-dimensional ideals. We denote the \term{degree} or \term{total multiplicity} of a zero-dimensional weighted balanced polyhedral complex $\Sigma$ by
\begin{equation*}\label{eq:total_multiplicity}
\deg(\Sigma):=\sum_{w\in \Sigma}\mult(w)\, .
\end{equation*}

\begin{theorem}[Zero-dimensional tropicalization]
\label{thm:zero-dimensional_case}
Let $I\subseteq \mathbb{K}[x^\pm]$ be an ideal with finitely many zeros that are all nondegenerate.
Then
\[ \deg(\Trop(I))=\#V(I) \, .\]
\end{theorem}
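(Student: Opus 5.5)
The plan is to derive the statement from \cite[Proposition~3.4.8]{MaclaganSturmfels15}. For a zero-dimensional ideal, that result identifies the multiplicity of a point $w\in\Trop(I)$ with the number of zeros of $I$ of valuation $w$, counted with algebraic multiplicity. The degree of $\Trop(I)$ is then a sum of such multiplicities, and we only need to match it with $\#V(I)$.

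Since $I$ has finitely many zeros, $V(I)$ is a finite set $\{z_1,\dots,z_N\}\subseteq(\mathbb{K}^*)^n$. The set $\{\val(z)\mid z\in V(I)\}$ is therefore finite and already closed, so $\Trop(I)$ is exactly this finite set of points. As a polyhedral complex it is zero-dimensional, consistent with $I$ having Krull dimension zero. Now fix a point $w\in\Trop(I)$. The multiplicity $\mult(w)$ of \cite[Definition~3.4.3]{MaclaganSturmfels15} is the sum of the multiplicities of the minimal associated primes of the initial ideal $\mathrm{in}_w(I)$. By \cite[Proposition~3.4.8]{MaclaganSturmfels15} this equals
\[\mult(w)=\sum_{z\in V(I),\ \val(z)=w}\mult(z,I)\,,\]
where $\mult(z,I)$ denotes the algebraic (intersection) multiplicity of $I$ at $z$.

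The step I expect to be the main obstacle is matching the hypotheses and notion of multiplicity in \cite[Proposition~3.4.8]{MaclaganSturmfels15} with ours. First, that result is usually stated for ideals whose zero set is finite, with multiplicity meaning the length of the local ring $\mathbb{K}[x^\pm]_{\mathfrak m_z}/I_{\mathfrak m_z}$. Second, in case the statement there assumes a residue field or valuation setup, we note that $\mathbb{K}$ is algebraically closed with value group $\QQ$ and residue field $\CC$. Third, we must check that the zeros in $V(I)$ lie in $(\mathbb{K}^*)^n$, so that they are valued points. This holds by definition of $V(I)$ in the torus.

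Next we use nondegeneracy. Suppose $z$ is a nondegenerate zero, so the Jacobian of a generating set has full rank $n$ at $z$. Then the maximal ideal $\mathfrak m_z$ is generated modulo $\mathfrak m_z^2$ by the images of the generators of $I$. By Nakayama's lemma, $I_{\mathfrak m_z}=\mathfrak m_z$, so the local ring is $\mathbb{K}$ and $\mult(z,I)=1$. Summing over all $w$ then gives
\[\deg(\Trop(I))=\sum_{w}\mult(w)=\sum_{w}\#\{z\in V(I)\mid \val(z)=w\}=\#V(I)\,,\]
since the fibres of $\val$ partition $V(I)$.
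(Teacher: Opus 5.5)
Your proposal is correct and takes the same route as the paper, which gives no argument beyond saying that the statement follows from \cite[Proposition~3.4.8]{MaclaganSturmfels15}. You fill in the details the paper leaves implicit: $\Trop(I)=\val(V(I))$ is a finite set, and nondegeneracy makes each local multiplicity equal to one via Nakayama's lemma, so the multiplicities over each fibre of $\val$ sum to $\#V(I)$.
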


In principle, \Cref{thm:zero-dimensional_case} tells us that $\grc(F)$ for $F$ as in \eqref{eq:vertical} agrees with the total multiplicity of  
$\Trop(\langle F_{a,b}\rangle)$ for a generic choice of $(a,b)$. 
In practice, however, tropicalizations of zero-dimensional ideals might be computationally expensive (see, e.g., \cite{GorlachRenZhang2022}). Yet another downside of applying \Cref{thm:zero-dimensional_case} is that it does not come with a certificate that a given choice of $(a,b)$ is generic enough to give the generic root count $\grc(F)$. 

In what follows, we therefore make use of the tropical root counting strategy developed in \cite{LeykinYu2019,HelminckRen22,HoltRen2023,HelminckHenrikssonRen2024} of decomposing the system into pieces that are easier to tropicalize, and for which one obtains certificates of the genericity of the parameters. Specifically, we use the toric reembedding from \Cref{subsec:reembedding} and, to this end, we treat the case of tropicalizing binomial and affine ideals in \Cref{subsec:tropicalizing_toric_ideals,subsec:tropicalizing_affine_ideals} below. 
In \Cref{subsec:stable_intersection}, we discuss how sums of ideals correspond to intersections of tropical varieties, and finally, in \Cref{subsec:root_bounds}, we present our root counting formulas with explicit genericity conditions on the parameters.

We end the subsection by giving a brief introduction to the concept of \emph{positive} tropicalizations, which allows us to derive lower bounds on the maximal positive root count $\mrc_{>0}(F)$. The Puiseux series with  real  coefficients   form a real closed subfield of $\mathbb{K}$, denoted by $\RRt$, and   the multiplicative group of \term{positive real Puiseux series} is 
  \begin{equation*}
     \RRt_{>0}\coloneqq \Big\{ \sum_{i=k}^\infty c_{i/N} \, t^{i/N}\in \RRt \mid   c_{k/N} > 0 \Big\}\,.
     \end{equation*}
The \term{positive tropical variety} of an ideal $I\subseteq\RRt[x^\pm]$ is defined as
  \begin{equation}\label{eq:trop_pos}
    \Trop^+(I)\coloneqq\overline{\left\{ \val(z) \mid z\in V(I)\cap   \RRt_{>0}^n \right\}}
  \end{equation}
with closure taken with respect to the Euclidean topology on $\mathbb{R}^n$.
This set can be equipped with a polyhedral complex structure \cite[Theorem~4.10]{Alessandrini2007}, \cite[Theorem 3.1]{TropicalSpectrahedra} (which need not be a subcomplex of $\Trop(I)$). Here, we are only interested in $\Trop^+(I)$ as a set.

\subsection{Tropicalization of binomial ideals}
\label{subsec:tropicalizing_toric_ideals}
We consider prime binomial  ideals of the form $I=\langle x^B-c\rangle\subseteq \mathbb{K}[x^\pm]$ for an exponent matrix $B\in\ZZ^{n\times r}$ and vector $c\in(\mathbb{K}^*)^r$. Such ideals are precisely the ideals of scaled toric varieties in $(\mathbb{K}^*)^n$. 
By applying the valuation map to both sides of the equation $x^B=c$, one sees that the tropicalization of $I$ is simply a classical affine linear space. The weighted balanced polyhedral structure is described in  \cite[Lemma~2.7]{BLMM17}. We gather these results in the following lemma.

\begin{lemma}
\label{lem:binomial}
Let $I=\langle x^B-c\rangle\subseteq \mathbb{K}[x^\pm]$ be a prime binomial ideal with $B\in\ZZ^{n\times r}$ and $c\in(\mathbb{K}^*)^r$. Then, as a set, 
\[\Trop(I)=\left\{w\in\RR^n \suchthat
B^\top w=\val(c)\right\}\, .\]
As a weighted balanced polyhedral complex, it consists of a single polyhedron of multiplicity $1$. 
If additionally $c\in \RRt_{>0}^r$, it holds that
  \begin{equation*}
      \Trop^+(I) = \Trop(I)\,.
  \end{equation*}
\end{lemma}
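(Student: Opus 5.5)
The plan is to prove the three claims of \Cref{lem:binomial} in turn: the set-theoretic description, the weighted polyhedral structure, and positivity.

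\emph{Set-theoretic description.}

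The inclusion ``$\subseteq$'' is immediate. If $z\in V(I)$, then $z^B=c$, and taking valuations coordinatewise gives $B^\top\val(z)=\val(c)$. The right-hand side is a closed affine subspace, so taking the Euclidean closure preserves the inclusion.

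For ``$\supseteq$'', I first produce a single point $z_0\in V(I)$ and then translate by one-parameter subgroups.

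Primality of $I$ forces $V(I)\neq\emptyset$. Indeed, $I$ is a proper ideal and $\mathbb{K}$ is algebraically closed, so the Nullstellensatz for Laurent polynomial rings applies. Fix $z_0\in V(I)$ and set $w_0=\val(z_0)$.

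The solution set of $B^\top w=\val(c)$ is $w_0+\ker(B^\top)$. The kernel $\ker(B^\top)$ is a rational subspace, so its rational points $\ker(B^\top)\cap\QQ^n$ are dense in it.

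Now take any $u\in\ker(B^\top)\cap\QQ^n$. Then $(t^u)^B=t^{B^\top u}=\mathbf{1}$, so $z_0\star t^u\in V(I)$, and its valuation is $w_0+u$. Hence the set of valuations contains a dense subset of the affine space, and equality follows after taking closures.

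\emph{Weighted polyhedral structure.}

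The key step is to change coordinates so that $I$ becomes a linear binomial ideal. The lattice $\Lambda=\im_{\ZZ}(B)$ has a saturation $\Lambda^{\mathrm{sat}}$. Since $I$ is prime, I expect one can rewrite $I=\langle x^{b_j}-c'_j : j=1,\dots,k\rangle$, where $b_1,\dots,b_k$ is a $\ZZ$-basis of $\Lambda^{\mathrm{sat}}$. Over the algebraically closed field $\mathbb{K}$, the quotient $\Lambda^{\mathrm{sat}}/\Lambda$ only contributes roots of unity, and primality rules out several components.

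Extend $b_1,\dots,b_k$ to a basis of $\ZZ^n$. This gives a monomial automorphism of $(\mathbb{K}^*)^n$, induced by a matrix in $GL_n(\ZZ)$. In the new coordinates, $I=\langle x_1-c'_1,\dots,x_k-c'_k\rangle$.

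Tropicalization commutes with such automorphisms, acting by the corresponding linear map on $\RR^n$, and multiplicities are preserved under this map. For the coordinate-linear ideal, the Gröbner complex has a single maximal cell. Its initial ideal at any point $w$ of the cell is $\langle x_i-\overline{c'_i}\rangle$, which is prime and saturated, so the multiplicity is $1$ by \cite[Definition~3.4.3]{MaclaganSturmfels15}.

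This step is the main obstacle: choosing the saturated generators and justifying that they still generate $I$. Alternatively, one can simply cite \cite[Lemma~2.7]{BLMM17} for the structure.

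\emph{Positivity.}

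Now assume $c\in\RRt_{>0}^r$. I repeat the density argument, but with a positive starting point.

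To get such a point, write $c=c^{(0)}\star c^{(1)}$, where $c^{(0)}=t^{\val(c)}$ and $c^{(1)}$ has positive real leading coefficients and valuation $0$. I need to solve $z^B=c$ with $z$ positive.

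Positive real Puiseux series admit unique positive $N$-th roots, and in particular positive real logarithms in the sense of the multiplicative group. Hence $\RRt_{>0}$ is a divisible, torsion-free abelian group, that is, a $\QQ$-vector space.

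The map $z\mapsto z^B$ from $\RRt_{>0}^n$ to $\RRt_{>0}^r$ is a group homomorphism. After tensoring with $\QQ$, its image is the set of $c$ annihilated by the relations in $\ker_{\ZZ}(B)$.

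It remains to check that $c$ satisfies these relations. Primality of $I$ and $V(I)\neq\emptyset$ say that $c^u=1$ for all $u\in\ker_{\ZZ}(B)$. Therefore a positive solution $z_0$ exists.

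Finally, the translates $z_0\star t^u$ used in the first part remain positive, so the same density argument yields $\Trop^+(I)=\Trop(I)$.
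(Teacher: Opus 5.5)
Your proposal is correct. For the set-theoretic part it follows the paper's idea but completes it. The paper only remarks that applying $\val$ to $x^B=c$ gives the description, which by itself yields just the inclusion $\subseteq$. It then cites \cite[Lemma~2.7]{BLMM17} for the weighted structure and gives no argument at all for $\Trop^+(I)=\Trop(I)$.

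Your proposal fills each of these in:
\begin{itemize}
\item Translating a base point $z_0$ by $t^u$ with $u\in\ker(B^\top)\cap\QQ^n$ supplies the reverse inclusion.
\item The unimodular change of coordinates gives a self-contained proof of multiplicity one.
\item The positive base point, obtained from unique divisibility of $\RRt_{>0}$ and $c^u=1$ for $u\in\ker_{\ZZ}(B)$, justifies the positivity claim.
\end{itemize}

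The step you leave hedged closes quickly and needs no rewriting over $\Lambda^{\mathrm{sat}}$. Write $\Lambda=\im_{\ZZ}(B)$. Then $V(I)=z_0\star\{z\mid z^u=1 \text{ for all } u\in\Lambda\}$ has $[\Lambda^{\mathrm{sat}}:\Lambda]$ irreducible components. Primality of $I$ makes $V(I)$ irreducible, so $\Lambda$ is already saturated. Hence any $\ZZ$-basis $B\lambda_1,\dots,B\lambda_k$ of $\Lambda$ gives generators $x^{B\lambda_i}-c^{\lambda_i}$ of $I$; here $c^{\lambda}$ is well defined because $c^{\lambda}=z_0^{B\lambda}$.

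In these coordinates the constants $c^{\lambda_i}$ are positive whenever $c$ is. This gives you the positive base point directly, without the divisibility argument.
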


\begin{example}
  \label{ex:polyhedralStructureBinomial}
   Applying \Cref{lem:binomial} to $I=\langle x_1\, x_2 -1\rangle$ we obtain (see \Cref{fig:affineLinearSpace}):
  \begin{equation*}
   \Trop(I)=  \Trop^+(I) =\Big\{ (w_1,w_2)\in\RR^2\mid w_1 + w_2 = 0\Big\} = \Span_{\RR}( (1,-1))\,. \qedhere
  \end{equation*}
\end{example}

\subsection{Tropicalization of affine ideals}
\label{subsec:tropicalizing_affine_ideals}
Next, we treat \term{affine ideals} of the form
\[ I=\langle Ax-c\rangle\subseteq \CC[x^\pm], \qquad A\in \CC^{k \times n}, \quad c\in \CC^k \, . \]
It is well known that the tropicalization is determined by the \emph{matroid} of the augmented coefficient matrix $[\,A\mid-c\,]$, which we briefly recall below. 
For a comprehensive introduction to matroid theory, Bergman fans, and tropical linear spaces, we refer to \cite[Section~4.2]{MaclaganSturmfels15} and  \cite{Oxley}. See also \cite[Section~3]{BonifaceDevriendtHosten2025} for an explicit description of tropical affine spaces. 

A \emph{matroid} $\mathcal{M}$ on an index set $[n]$ is a collection of subsets of $[n]$ that are declared to be \emph{circuits}, subject to certain axioms \cite[Section~1.1]{Oxley}.
In this paper, we are only concerned with \emph{linear} matroids, which can be defined by a matrix in the sense of the following definition. Note that the resulting matroid is the \emph{dual of the column matroid} of the matrix \cite[Section~2.1]{Oxley}; however, in what follows, we will simply refer to it as ``the'' matroid of the matrix. 
 
\begin{definition}
  \label{def:matroid}
  Let $A\in \mathbb{F}^{k\times n}$ be a matrix over a field $\mathbb{F}$. The \term{matroid} of $A$, denoted $\mathcal{M}^*[A]$, is the matroid on $[n]$ whose circuits are the supports of the vectors in $\rowspan(A)$ with minimal support.
  The set of circuits of $\mathcal{M}^*[A]$ is denoted by $\mathcal{C}^*[A]$.
\end{definition}

A more refined notion used for positive tropicalization is that of \emph{oriented matroids}, where the circuits are \emph{signed}, in the sense that  they are the disjoint union of two sets, identified respectively as the positive and negative parts. We refer to   \cite{BjornerLasVergnasSturmfelsWhiteZiegler:1999} for a general introduction.

\begin{definition}
   For a matrix $A\in\mathbb{R}^{k\times n}$,  the \term{oriented matroid} of $A$, denoted $\mathcal{M}_{\pm}^*[A]$, is defined by letting the signed circuits be the pairs $(C^+,C^-)$ obtained as
   \[ C^+ =\{ i\in[n] \mid v_i > 0\} \qquad  C^- = \{ i\in[n] \mid v_i < 0\} \, \]  
for some $v\in\rowspan(A)$ with $\supp(v)\in \mathcal{C}^*[A]$.  We let $\mathcal{C}_{\pm}^*[A]$ denote the set of signed circuits.
\end{definition}

\begin{figure}[t]
\noindent
  \centering
  \begin{minipage}[h]{0.4\textwidth}
    \centering
 \scalebox{0.75}{\begin{tikzpicture}
      \draw[darkred,-,line width=2pt] (-1.5,1.5) -- (1.5,-1.5);
    \draw[gray,dashed,line width=1pt] (0,-2) -- (0,2);
     \draw[gray,dashed,line width=1pt] (-2,0) -- (2,0);
    \node[black,anchor=center] (a) at (0,2.5) {$\Trop(\langle x_1\, x_2 -1\rangle)$};
    \node (w2) at (-0.3,1.7) {\small $w_2$};
    \node (w1) at (2,-0.3) {\small $w_1$};
   \end{tikzpicture}}
  \end{minipage}
  \begin{minipage}[h]{0.45\textwidth}
    \centering
  \scalebox{0.75}{\begin{tikzpicture}
  \node[black,anchor=center] (a) at (0,2.5) {$\Trop(\langle x_1+x_2-1\rangle)$};
    \draw[gray,dashed,line width=1pt] (0,-2) -- (0,2);
     \draw[gray,dashed,line width=1pt] (-2,0) -- (2,0);
       \node (w2) at (-0.3,1.7) {\small $w_2$};
    \node (w1) at (2,-0.3) {\small $w_1$};

    \draw[darkblue,-,line width=1pt] (0,0) -- ++(-1.6,-1.6);
    \draw[darkred,-,line width=2pt] (0,0) -- ++(2,0);
    \draw[darkred,-,line width=2pt] (0,0) -- ++(0,2);
    \coordinate (v0) at (-0.8,-0.4);
\end{tikzpicture}}
  \end{minipage}
  \caption{
  Left figure:  $\Trop(\langle x_1\, x_2-1\rangle)$  from \Cref{ex:polyhedralStructureBinomial}, which coincides with $\Trop^+(\langle x_1\, x_2-1\rangle)$. Right figure: 
  $\Trop(\langle x_1+x_2-1\rangle)$ from \Cref{ex:polyhedralStructureAffine}, with only the two red thicker rays defining $\Trop^+(\langle x_1+x_2-1\rangle)$.
  }
  \label{fig:affineLinearSpace}
\end{figure}

The matroid of a full row rank matrix is precisely determined by which of its maximal minors (also known as \emph{Plücker coordinates}) vanish \cite[Section~6.8]{Oxley}. Similarly, the oriented matroid is determined by the sign of the Plücker coordinates \cite[Section~8.1]{BjornerLasVergnasSturmfelsWhiteZiegler:1999}. In particular, for two matrices $A,B\in \FF^{k\times n}$ it holds that
\begin{align}
  \mathcal{M}^*[A] =   \mathcal{M}^*[B] & \quad \Leftrightarrow \quad \{J\in\textstyle\binom{[n]}{k}  \mid \det A_J = 0\}  =\{J\in\textstyle\binom{[n]}{k}  \mid \det B_J = 0\} \label{eq:matroids_agree}\, ,
  \\ 
   \mathcal{M}^*_\pm[A] =   \mathcal{M}^*_\pm[B]
   & \quad \Leftrightarrow \quad  \sgn(\det A_{J})=\sgn(\det B_{J}) \text{ for all }J\in\textstyle\binom{[n]}{k} \, . \label{eq:signed_matroids_agree}
\end{align}

An important example that will play a key role later is the \term{uniform matroid} of rank $k$ on $[n]$, for which all subsets of $[n]$ of size $k+1$ are circuits. Equivalently, this is the matroid $\mathcal{M}^*[A]$ for a matrix $A\in\QQ^{k\times n}$ for which all $k$-minors are nonzero.

We are now ready to describe the tropicalization of affine ideals. 

\begin{theorem}
\label{thm:TropLinearSpace}\label{thm:PosTropLinearSpace}
Let $I =\langle Ax - c\rangle \subseteq \CC[x^\pm]$ be an affine  ideal defined by  the matrix $A\in \CC^{k\times n}$  
of rank $k$ 
and the vector $c\in \CC^k$. Furthermore, let the map $\pi\colon\mathbb{R}^{n+1}\to\mathbb{R}^n$ denote the projection onto the first $n$ coordinates. Then, as a set,
\begin{multline*}
\Trop(I)  = \pi \big(\{ w \in \mathbb{R}^{n+1} \mid 
w_{n+1}=0 \text{ and for all } C \in \mathcal{C}^*[\,A\mid -c\,] \text{ it holds that}   \\  \textstyle\min_{\ell \in C} \, w_\ell = w_i = w_j\   \text{ for some }i,j\in C,\ i\neq j \} \big)\,.
\end{multline*}
As a weighted polyhedral complex, $\Trop(I)$ is an $(n-k)$-dimensional polyhedral fan, with all multiplicities equal to one. It is the coarsest of all polyhedral complexes supported on $\Trop(I)$.
 
If in addition $I  \subseteq \RR[x^\pm]$, then
\begin{align*}
\Trop^+(I)  = \pi \big(\{ w \in \mathbb{R}^{n+1} \mid 
& w_{n+1}=0 \text{ and for all } (C^+,C^-) \in \mathcal{C}^*_\pm[\,A\mid -c\,] \text{ it holds that} \\  & \hspace{5mm} \textstyle\min_{\ell \in C^+\cup C^-} \, w_\ell = w_i = w_j   \ \text{ for }i \in C^+ ,\  j \, \in C^- \} \big)\,.
\end{align*}
\end{theorem}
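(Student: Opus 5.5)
The plan is to reduce the affine statement to the standard description of tropicalizations of linear ideals via homogenization. First, introduce an auxiliary variable $x_{n+1}$ and consider the homogeneous linear ideal
\[
J=\langle Ax-c\,x_{n+1}\rangle\subseteq \mathbb{K}[x_1^\pm,\dots,x_{n+1}^\pm],
\]
whose coefficient matrix is $[\,A\mid -c\,]$. Its zeros in $(\mathbb{K}^*)^{n+1}$ are the scalings $\lambda(z,1)$ with $z\in V(I)$ and $\lambda\in\mathbb{K}^*$. Hence
\[
\Trop(J)=\Trop(I)\times\{0\}+\RR\mathbf{1},
\]
and $\Trop(I)$ is recovered as $\pi\bigl(\Trop(J)\cap\{w_{n+1}=0\}\bigr)$. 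The same scaling argument with $\lambda\in\RRt_{>0}$ gives $\Trop^+(I)=\pi\bigl(\Trop^+(J)\cap\{w_{n+1}=0\}\bigr)$, since positivity of $(z,1)$ is preserved.

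Next, identify $\Trop(J)$ with the Bergman fan of the matroid $\mathcal{M}^*[\,A\mid -c\,]$. The key input is that the linear forms in $J$ with minimal support are exactly the row combinations of $[\,A\mid -c\,]$ whose supports are the circuits in $\mathcal{C}^*[\,A\mid -c\,]$. These circuit forms constitute a universal tropical basis of the linear ideal, by \cite[Proposition~4.1.6]{MaclaganSturmfels15}. Consequently, $w\in\Trop(J)$ if and only if, for every circuit $C$, the minimum $\min_{\ell\in C}w_\ell$ is attained at least twice. Here the valuations of the coefficients are all $0$, because the coefficients lie in $\CC$. Combining this with the first step yields the claimed set description of $\Trop(I)$.

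For the weighted structure, cite \cite[Theorem~4.2.6]{MaclaganSturmfels15}: $\Trop(J)$ is a fan of dimension $n+1-k$ with all multiplicities one. The fan property uses that the coefficients are constant. Intersecting with the hyperplane $w_{n+1}=0$, which is transverse to the lineality direction $\mathbf{1}$, gives an $(n-k)$-dimensional fan with multiplicities preserved. The statement that this is the coarsest structure is the known fact that the coarse structure on a Bergman fan has trivial initial-matroid changes exactly at flats; for this I would cite rather than reprove.

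The positive part is where I expect the main obstacle. Here one needs that the signed circuit forms are a positive tropical basis. Namely, $w\in\Trop^+(J)$ if and only if, for each signed circuit $(C^+,C^-)$, the minimum over $C^+\cup C^-$ is attained at some $i\in C^+$ and some $j\in C^-$. Necessity is easy: if the minimum over the circuit occurs only with one sign, then the leading terms of the circuit form evaluated at a positive Puiseux point cannot cancel. For sufficiency, I would invoke the description of positive tropical linear spaces via oriented matroids, as in \cite{Alessandrini2007,JellScheidererYu} (the positive Bergman fan). Applying that description to $[\,A\mid -c\,]$ and then the first step completes the proof.
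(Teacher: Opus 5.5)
Your proposal is correct and follows essentially the same route as the paper, which also homogenizes to $[\,A\mid -c\,]$, uses that the circuits form a tropical basis \cite[Proposition~4.1.6]{MaclaganSturmfels15} together with the fundamental theorem and dehomogenization, and cites \cite[Section~4.2]{MaclaganSturmfels15} for the fan structure, the trivial multiplicities and the coarsest structure. For the sufficiency direction of the positive part, the reference that actually states the positive Bergman fan description, and the one the paper uses, is \cite[Proposition~4.1]{ArdilaKlivansWilliams2006} (see also \cite[Proposition~3.1]{RoseTelek24}), not the general positive-tropicalization papers you cite.
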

\begin{proof}
    The description of $\Trop(I)$ is a consequence of several statements in \cite{MaclaganSturmfels15}: Proposition~4.1.6 and  Proposition~2.6.1  to dehomogenize, Theorem 3.2.4 for the extension of $I$ to $\mathbb{K}[x^\pm]$, Definition~3.4.3 for the multiplicities being 1, and Section 4.2 for the polyhedral structure.
       The description of $\Trop^+(I)$ is given in \cite[Proposition~4.1]{ArdilaKlivansWilliams2006} and \cite[Proposition~3.1]{RoseTelek24}. 
\end{proof}

\begin{remark}\label{rmk:linear_ideal}
When $I$ is linear ($c=0$), the resulting tropical linear space is also known as the \emph{Bergman fan} of   $\mathcal{M}^*[A]$.    
In this case, $\Trop(I)$ is simply the set of vectors $\omega\in \RR^n$ for which the minimum is attained twice on the support of all circuits of $\mathcal{M}^*[A]$. An analogous simplification holds for $\Trop^+(I)$.
\end{remark}

\begin{example}
\label{ex:polyhedralStructureAffine}
Consider the affine  ideal $I=\langle x_1+x_2-1\rangle$ with defining matrix $[\,A \mid -c\,]$ equal to $[1\ \ 1\ \ -1]$. It is straightforward to see that 
\[\mC^*[\,A \mid -c\,]=\big\{\{1,2,3\}\big\} \quad \text{ and } \quad \mC^*_\pm[\,A \mid -c\,] = \big\{\,(\{1,2\},\{3\}),\: (\{3\},\{1,2\})\,\big\}\,.\]
Hence $\Trop(I)$ consists of the vectors $(w_1,w_2)\in\RR^2$ such that 
\[ w_1=w_2\leq 0\,\quad \text{or} \quad 0=w_1\leq w_2\,\quad \text{or} \quad0=w_2\leq w_1\,.\]
These   define three rays in $\RR^2$,  of which only the last two belong to $\Trop^+(I)$; see   \Cref{fig:affineLinearSpace}.
\end{example}

\begin{example}
\label{ex:BergmanFanRunning}
For $b\in \CC$, consider the affine  ideal $I=\langle A x - c \rangle $ 
with
\begin{equation*}
        [ A \, |\, -c] = \begin{bmatrix}
            1 & -1 &-1 & 0 &0 &0 \\
            0 & 0 & 0 & 1 & 1 & -b
        \end{bmatrix}\,.
    \end{equation*}
If $b\neq 0$, clearly $\{1,2,3\}$ and $\{4,5,6\}$ are the circuits of $\mathcal{M}^*[\, A \mid -c\,]$. 
    By \Cref{thm:TropLinearSpace}, $\Trop(I)$ consists of all $w\in \RR^5$ such that  
\begin{equation}\label{eq:matroid1}
w_1=w_2 \leq w_3 \quad\text{or} \quad w_1=w_3 \leq w_2 \quad\text{or} \quad w_2=w_3 \leq w_1 \,
\end{equation}
and  
\begin{equation}\label{eq:matroid2} 
w_4=w_5 \leq 0 \quad\text{or} \quad 0=w_4 \leq w_5 \quad\text{or} \quad 0=w_5\leq w_4 \,.   
\end{equation}
As a set, $\Trop(I)$ is the union of the $9$ maximal dimensional polyhedral cones
obtained by considering one constraint from \eqref{eq:matroid1} and one from \eqref{eq:matroid2}. These can be expressed as 
\[ \cone(e_i,-e_4-e_5)+ \mathcal{L}\,,  \quad \cone(e_i,e_4)+ \mathcal{L}\,, \quad \cone(e_i,e_5)+ \mathcal{L}\,, \qquad i=1,2,3\,, \]
where $e_1,\dots,e_5 \in \mathbb{R}^5$ are the standard basis vectors, and $\mathcal{L} = \Span_{\RR}((1, 1, 1, 0, 0))$ is the lineality space of each of the cones. 
When $b$ is a positive real number, the  set of signed circuits is 
\[ \mathcal{C}^*_\pm[\,A\mid -c\,] = \big\{ (\{1\},\{2,3\}), \; (\{2,3\},\{1\}),\; (\{4,5\},\{6\}), \; (\{6\},\{4,5\}) \big\}\, .\]
For $\Trop^+(I)$, we   remove the inequalities $w_2=w_3 \leq w_1$ and $w_4=w_5 \leq 0$ from 
\eqref{eq:matroid1} and   \eqref{eq:matroid2}, as the minimum  is attained at two indices both included in the positive or negative part of a signed circuit. 
Hence, $\Trop^+(I)$ equals the union of four polyhedral cones:
    \begin{equation*}
       \left( \cone(e_2,e_4) \cup \cone(e_2,e_5) \cup \cone(e_3,e_4) \cup \cone(e_3,e_5)\right)+ \mathcal{L}\,.\qedhere
    \end{equation*}
\end{example}

In our setting, we work with affine ideals where  $A$ and $c$ depend polynomially on some parameters $b=(b_1,\ldots,b_d)$. It is not hard to see that for generic choices of the parameters, the associated matroid $\mathcal{M}^*[\,A_b\mid -c_b]$ coincides with the \emph{generic matroid} $\mathcal{M}^*[\,A\mid -c]$ computed over the field $\CC(b)$ of rational functions in the parameters. 
In this situation, we let $\Trop(\langle Ax-c\rangle)$  refer to the tropicalization of $\langle A_bx-c_b\rangle$ for any such generic $b$. 
The following lemma, which is an immediate consequence of \eqref{eq:matroids_agree} and \eqref{eq:signed_matroids_agree}, gives an explicit description of this generic locus.

\begin{lemma}[Generic matroids]
\label{lem:genericMatroid}
Let $A\in \CC[b]^{k\times n}$ and $c\in\CC[b]^k$ for   $b=(b_1,\ldots, b_d)$, and suppose that the generic rank of $[\,A\,|\, -c\,]$ is $k$. 
\begin{enumerate}[label=(\roman*)]
    \item The matroid $\mathcal{M}^*[\,A\mid -c\,]$ over the field $\CC(b)$ coincides with the specialized matroid $\mathcal{M}^*[\,A_b\mid -c_b\,]$ if and only if $b$ is in the set
     \[ \mathcal{B}:= \left\{b\in\CC^d\mid \det [\,A| -c\,]_{J} = 0 \text{ for all $J\in\textstyle\binom{[n+1]}{k}$ such that } \det[\,A_b\mid -c_b\,]_{J} = 0 \right\}. \]
    In particular, $\Trop(\langle A_b\, x-c_b\rangle)$ is constant for $b\in \mathcal{B}$. 
    \item
    Suppose $A$ and $c$ have real entries. Then, for any fixed $b^*\in \RR^d$, it holds that $\mathcal{M}^*_{\pm}[\,A_b\mid -c_b\,]$ coincides with $\mathcal{M}^*_{\pm}[\,A_{b^*}\mid -c_{b^*}\,]$ if and only if $b$ is in the set
    \[ \mathcal{B}_{b^*}^{\pm}:= \left\{b\in\RR^d\mid \sgn(\det[\,A_b\mid -c_b\,]_{J})=\sgn(\det[\,A_{b^*}\mid -c_{b^*}\,]_{J}) \text{ for all }J\in\textstyle\binom{[n+1]}{k}\right\}\, . \]
    In particular, $\Trop^+(\langle A_b\, x-c_b\rangle)$ is constant for $b\in \mathcal{B}_{b^*}^{\pm}$. 
    If $b^*\in \mathcal{B}$, then $\mathcal{B}_{b^*}^{\pm}$ is a Euclidean open set.
\end{enumerate}
\end{lemma}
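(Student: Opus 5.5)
The plan for (i) is to reduce everything to the Plücker characterization \eqref{eq:matroids_agree}, applied over the field $\CC(b)$ and over $\CC$.

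Write $P_J\in\CC[b]$ for the maximal minor $\det[\,A\mid -c\,]_J$, where $J\in\binom{[n+1]}{k}$. Since the generic rank is $k$, the matrix $[\,A\mid -c\,]$ over $\CC(b)$ has full row rank, so its matroid is determined by the set $Z=\{J\mid P_J=0 \text{ in } \CC[b]\}$. For a specialization $b$, the polynomial $P_J(b)$ is the corresponding minor of $[\,A_b\mid -c_b\,]$. So $Z\subseteq Z_b:=\{J\mid P_J(b)=0\}$ always holds.

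The set $\mathcal{B}$ is exactly the condition $Z_b\subseteq Z$. For $b\in\mathcal{B}$, the set $Z_b=Z$ is not all of $\binom{[n+1]}{k}$, because some $P_J\neq0$. Hence $[\,A_b\mid -c_b\,]$ has full rank $k$, and \eqref{eq:matroids_agree} gives equality of matroids. Note that the polynomial matrix and its specialization live over different fields, but \eqref{eq:matroids_agree} only compares zero patterns of minors, so this is harmless. I would remark that the matroid is determined by the bases, namely the complements of the vanishing sets.

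Conversely, suppose the matroids agree. The specialized matroid then has rank $k$. Its non-bases are exactly $Z_b$, and equality of matroids gives $Z_b=Z$, so $b\in\mathcal{B}$. The one point to watch is the case where the specialized matrix drops rank. Then the matroid has a different rank and certainly differs, and indeed $Z_b$ is everything, so $b\notin\mathcal{B}$. Constancy of $\Trop(\langle A_bx-c_b\rangle)$ on $\mathcal{B}$ then follows from \Cref{thm:TropLinearSpace}, since the tropicalization depends only on $\mathcal{C}^*[\,A_b\mid -c_b\,]$ and $A_b$ has rank $k$. For this, note that if $[\,A_b\mid -c_b\,]$ has rank $k$ but $A_b$ does not, the ideal is the unit ideal; I would check whether this case needs excluding or is absorbed into the statement.

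For (ii) the argument is the same with \eqref{eq:signed_matroids_agree}. The signs $\sgn P_J(b)=\sgn P_J(b^*)$ for all $J$ characterize equality of oriented matroids, assuming both matrices have rank $k$. If the signs all agree, the ranks agree. If the rank at $b^*$ is less than $k$, all minors vanish at both points, and both oriented matroids are those of a rank-deficient matrix. In that case I would handle it separately, or simply note that the circuits are determined by the rowspan and that equal sign vectors of the Plücker coordinates give equal rowspan sign data. Constancy of $\Trop^+$ then follows from the second part of \Cref{thm:PosTropLinearSpace}.

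For openness, assume $b^*\in\mathcal{B}$. Every minor with $P_J\neq0$ satisfies $P_J(b^*)\neq0$, so it keeps its nonzero sign on a Euclidean neighbourhood of $b^*$ by continuity. Every minor with $P_J=0$ is identically zero and so has sign $0$ everywhere. Thus $\mathcal{B}^\pm_{b^*}$ is a finite intersection of open sets $\{\sgn P_J=\sgn P_J(b^*)\}$ with $P_J(b^*)\neq 0$, and is therefore open.

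The main obstacle is only bookkeeping. One must apply \eqref{eq:matroids_agree} across two different fields and handle the rank-deficient degenerations; everything else is immediate.
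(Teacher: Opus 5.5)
Your reduction to the Plücker characterizations is the same route the paper takes; it calls the lemma an immediate consequence of \eqref{eq:matroids_agree} and \eqref{eq:signed_matroids_agree}. Part (i) and the openness claim go through as you wrote them. The unit-ideal case you flag in (i) is harmless. Whether $\rank A_b=k$ is decided by whether some minor with $J\subseteq[n]$ is nonzero, and this is constant on $\mathcal{B}$. When it fails, $\langle A_bx-c_b\rangle=\langle 1\rangle$ has empty tropicalization for every such $b$.

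In part (ii), however, two steps fail, and they cannot be repaired because the corresponding claims are false as stated.

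First, the signs of the maximal minors determine the oriented matroid only up to a global sign. Replacing the matrix by $G[\,A\mid -c\,]$ with $\det G<0$ keeps the row span, hence every signed circuit, but flips every sign. So the ``only if'' direction does not follow, and in fact it fails. Take $k=n=d=1$, $A=(b)$, $c=(b)$. The ideal $\langle bx-b\rangle=\langle x-1\rangle$ and its oriented matroid are the same for all $b\neq 0$, yet $\mathcal{B}^\pm_{1}=\RR_{>0}$.

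Second, your fallback for rank-deficient $b^*$ (``equal sign vectors of the Plücker coordinates give equal rowspan sign data'') is false. When all maximal minors vanish, the sign vector is zero and carries no information. Take $[\,A\mid -c\,]=\begin{bmatrix}1&b_1&0\\0&0&b_2\end{bmatrix}$ and $b^*=(1,0)$. The point $b=(-1,0)$ lies in $\mathcal{B}^\pm_{b^*}=\{b_2=0\}$. But the two ideals are $\langle x_1+x_2\rangle$ and $\langle x_1-x_2\rangle$, with positive tropicalizations $\emptyset$ and $\{w_1=w_2\}$. So even the ``if'' direction and the constancy of $\Trop^+$ break down there.

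What your argument does establish is exactly what \Cref{thm:monomial_reembedding_bounds}(ii) uses, since there $(a^*,b^*)\in\mathcal{B}$. Namely, suppose $[\,A_{b^*}\mid -c_{b^*}\,]$ has rank $k$. Then:
\begin{itemize}
\item $b\in\mathcal{B}^\pm_{b^*}$ implies equality of oriented matroids. The signed circuits of a rank-$k$ matrix $X$ are, up to sign, the sign vectors of $j\mapsto\det[\,X_S\mid X_j\,]$ for $(k-1)$-sets $S$, and these are read off from the signs of the maximal minors.
\item Hence $\Trop^+$ is constant on $\mathcal{B}^\pm_{b^*}$ by \Cref{thm:PosTropLinearSpace}, with the unit-ideal case handled as in (i).
\item The converse holds only up to a global sign.
\item $\mathcal{B}^\pm_{b^*}$ is open when $b^*\in\mathcal{B}$.
\end{itemize}
The paper's one-line justification rests on \eqref{eq:signed_matroids_agree}, which is valid only for rank-$k$ matrices and only up to a global sign. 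So you should prove (ii) in this corrected form rather than as written.
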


We call the set $\mathcal{B}$ in \Cref{lem:genericMatroid} the \term{generic matroid locus} of the ideal $\langle A x - c \rangle$.

\begin{example}
For the matrix $[\,A \mid -c\,]$ from \Cref{ex:BergmanFanRunning} with a single parameter $b$, all non-vanishing minors  are equal to $\pm 1$, except for 
\[
 \det[\,A \mid -c\,]_{\{1,6\}} = -b\, ,  \qquad    \det[\,A \mid -c\,]_{\{2,6\}}   =  \det[\,A \mid -c\,]_{\{3,6\}} = b   \, .\]
Thus, by \Cref{lem:genericMatroid}, the generic matroid locus is $\mathcal{B}=\CC^*$, whereas
$\mathcal{B}_{b^*}^{\pm}=\RR_{>0}$ for $b^*>0$, $\mathcal{B}_{b^*}^{\pm}=\RR_{<0}$ for $b^*<0$,  
and $\mathcal{B}_{b^*}^{\pm}=\{0\}$ for $b^*=0$.
\end{example}

\subsection{Stable intersection}
\label{subsec:stable_intersection}
In this section, we consider the intersection of tropical varieties. We start by recalling that the \emph{intersection}  of  two polyhedral complexes    $\Sigma_1$ and $\Sigma_2$ in $\RR^n$
  is the polyhedral complex
  \begin{equation*}
\Sigma_1\cap\Sigma_2\coloneqq \{\sigma_1\cap\sigma_2\mid \sigma_1\in\Sigma_1 \text{ and } \sigma_2\in\Sigma_2\}\, .
  \end{equation*}
For $w$ in the support of $\Sigma_1\cap\Sigma_2$, consider cells $\sigma_1\in \Sigma_1$ and $\sigma_2\in \Sigma_2$ such that $w$ belongs to the relative interior of $\sigma_1$ and $\sigma_2$. We say that $\Sigma_1$ and $\Sigma_2$ intersect \emph{transversely} at   $w$ if 
\begin{equation}\label{eq:inter_transverse}
    \dim(\sigma_1+\sigma_2)=n \, .
\end{equation}   
  We say that $\Sigma_1$ and $\Sigma_2$ intersect  \term{transversely} if they intersect 
  transversely at all $w\in \Sigma_1\cap \Sigma_2$. 
Note that the intersection is transversal when it is empty.

 If   $\Sigma_1$ and $\Sigma_2$ are pure  weighted balanced polyhedral complexes that intersect  transversely, then we define the multiplicity of an inclusion-maximal cell $\sigma_1\cap \sigma_2$ of $\Sigma_1\cap\Sigma_2$ as
 \begin{equation}\label{eq:intersection_multiplicities}
    \mult_{\Sigma_1\cap \Sigma_2}(\sigma_1\cap\sigma_2) \coloneqq \mult_{\Sigma_1}(\sigma_1) \mult_{\Sigma_2}(\sigma_2) \, [\ZZ^n:\ZZ^n_{\sigma_1}+\ZZ^n_{\sigma_2}]\,,
  \end{equation}
  where  $\ZZ^n_{\sigma_i}=\ZZ^n\cap \Span(\sigma_i-u_i)$ for any $u_i\in \sigma_i$, and $[\ZZ^n:\ZZ^n_{\sigma_1}+\ZZ^n_{\sigma_2}]$ is the sublattice index. 
 
 Following \cite[Definition 3.6.5, Lemma 3.6.12]{MaclaganSturmfels15}, 
   we define the \term{stable intersection} $\Sigma_1\wedge\Sigma_2$  of 
   two  pure weighted balanced polyhedral complexes $\Sigma_1$ and $\Sigma_2$ in $\RR^n$ as follows:
    \begin{itemize}
        \item  If $\Sigma_1$ and $\Sigma_2$ intersect transversely, then $\Sigma_1\wedge\Sigma_2=\Sigma_1\cap\Sigma_2$ as weighted  polyhedral complexes.

  \item If $\Sigma_1$ and $\Sigma_2$ do not intersect transversely, then
  \begin{equation}\label{eq:shift}
    \Sigma_1\wedge\Sigma_2\coloneqq \lim_{\varepsilon\rightarrow 0} \Sigma_1\wedge (\Sigma_2+\varepsilon\, v)\,,
  \end{equation}
  where $v\in\RR^n$ is any  direction so that $\Sigma_1$ and $\Sigma_2+\varepsilon\, v$ intersect transversely for all $\varepsilon>0$ sufficiently small, and $\lim_{\varepsilon\rightarrow 0}$ also entails adding the multiplicities of all points of $\Sigma_1\wedge (\Sigma_2+\varepsilon\, v)$ with the same limit.
  \end{itemize}
With this definition, $\Sigma_1\wedge\Sigma_2$ is a weighted balanced polyhedral complex. 
By (the proof of) \cite[Proposition 3.6.12]{MaclaganSturmfels15}, there exists a nonempty Zariski open cone in $\RR^n$ of vectors $v$ satisfying the transversality condition required for \eqref{eq:shift}.

Finally, if 
 $\Sigma_1,\Sigma_2$ are pure weighted balanced polyhedral complexes of complementary dimension, we define their \term{intersection number} $\Sigma_1\cdot \Sigma_2$ to be the total multiplicity of $\Sigma_1\wedge\Sigma_2$: 
   \begin{equation}\label{eq:intersection}
    \Sigma_1\cdot \Sigma_2 \coloneqq \deg(\Sigma_1\wedge\Sigma_2)=\sum_{w\in\Sigma_1\wedge\Sigma_2} \mult_{\Sigma_1\wedge\Sigma_2}(w)\, .
  \end{equation}

\begin{example}
\label{ex:stableIntersection}
  Consider $\Trop(\langle x_1\, x_2 -1\rangle)$ from \Cref{ex:polyhedralStructureBinomial} and $\Trop(\langle x_1+x_2-1 \rangle)$ from \Cref{ex:polyhedralStructureAffine}, which do not intersect transversely.  \Cref{fig:stableIntersection} illustrates their stable intersection in two ways using two different translations in \eqref{eq:shift}: $v_1=(0,-1)$ and $v_2=(1,1)$. The intersection number  $\Trop(\langle x_1\, x_2 -1\rangle) \cdot \Trop(\langle x_1+x_2-1 \rangle)=2$ is independent of the translation.
\end{example}

\begin{figure}[t]
  \centering
  \scalebox{0.63}{
  \begin{tikzpicture}
    
    \node (left) at (-8,0)
    {
      \begin{tikzpicture}[every node style/.style={font=\footnotesize}]
        \useasboundingbox (-2,-2) rectangle (2,2);
        \draw[gray,dashed,line width=1pt] (0,-2) -- (0,2);
        \draw[gray,dashed,line width=1pt] (-2,0) -- (2,0);
        \fill(0,0) circle (1.5pt);
        \draw[darkblue,-,line width=1pt] (0,0) -- ++(-1.5,-1.5);
        \draw[darkred,-,line width=2pt] (0,0) -- ++(2,0);
        \draw[darkred,-,line width=2pt] (0,0) -- ++(0,2);

        \node[black,anchor=south,xshift=-5mm] at (-1.8,1) {$\Trop(\langle x_1\, x_2 -1\rangle)+\varepsilon\, v_1$};

        \coordinate (v1) at (0,-1);

        \draw[darkred,line width=2pt]
        (v1) -- ++(0.8,-0.8)
        (v1) -- ++(-1.8,1.8);

        \draw[fill=white] (-0.5,-0.5) circle (2pt);
        \node[anchor=east,xshift=-1mm,font=\scriptsize] at (-0.5,-0.5) {$2$};
      \end{tikzpicture}
    };

    \node (mid) at (0,0)
    {
      \begin{tikzpicture}[every node style/.style={font=\footnotesize}]
        \useasboundingbox (-2,-2) rectangle (2,2);
        \draw[gray,dashed,line width=1pt] (0,-2) -- (0,2);
        \draw[gray,dashed,line width=1pt] (-2,0) -- (2,0);

        \draw[darkblue,-,line width=1pt] (0,0) -- ++(-1.5,-1.5);
        \draw[darkred,-,line width=2pt] (0,0) -- ++(2,0);
        \draw[darkred,-,line width=2pt] (0,0) -- ++(0,2);

        \draw[darkred,line width=2pt]
        (0,0) -- ++(1.5,-1.5)
        (0,0) -- ++(-1.5,1.5);

        \node[black,anchor=north west] at (0,2) {$\Trop(\langle x_1+x_2-1\rangle)$};
        \node[black,anchor=east] at (-1.5,1.5) {$\Trop(\langle x_1\, x_2 -1\rangle)$};

        \draw[fill=white] (0,0) circle (2.5pt);
        \node[anchor=south west] at (0,0) {$2$};
      \end{tikzpicture}
    };

    \node (right) at (8,0)
    {
      \begin{tikzpicture}[every node style/.style={font=\footnotesize}]
        \useasboundingbox (-2,-2) rectangle (2,2);
        \draw[gray,dashed,line width=1pt] (0,-2) -- (0,2);
        \draw[gray,dashed,line width=1pt] (-2,0) -- (2,0);
        \fill(0,0) circle (1.5pt);
        \draw[darkblue,-,line width=1pt] (0,0) -- ++(-1.5,-1.5);
        \draw[darkred,-,line width=2pt] (0,0) -- ++(2,0);
        \draw[darkred,-,line width=2pt] (0,0) -- ++(0,2);

        \coordinate (v2) at (0.5,0.5);
        \draw[darkred,line width=2pt]
        (v2) -- ++(1.5,-1.5)
        (v2) -- ++(-1.5,1.5);

        \node[black,anchor=north,xshift=5mm] at (2,-1) {$\Trop(\langle x_1\, x_2-1\rangle)+\varepsilon \, v_2$};

        \draw[fill=white]
        (1,0) circle (2pt)
        (0,1) circle (2pt);

        \node[anchor=south west,font=\scriptsize] at (1,0) {$1$};
        \node[anchor=south west,font=\scriptsize] at (0,1) {$1$};
      \end{tikzpicture}
    };

    \draw[->,line width=1pt,black!60,shorten >=1.5em,shorten <=1.5em] (left.east) -- (mid.west);
    \draw[->,line width=1pt,black!60,shorten >=1.5em,shorten <=1.5em] (right.west) -- (mid.east);

    \node[gray] at ($(left.east)!0.5!(mid.west) + (0,+0.5)$) {${\small \varepsilon\rightarrow 0}$};
    \node[gray] at ($(right.west)!0.5!(mid.east) + (0,+0.5)$) {${\small \varepsilon\rightarrow 0}$};
  \end{tikzpicture}
  }

  \caption{The stable intersection $\Trop(\langle x_1\, x_2 -1\rangle)\wedge\Trop(\langle x_1+x_2-1\rangle)$ from \Cref{ex:stableIntersection} computed separately with the translations $v_1=(0,-1)$ and $v_2=(1,1)$. The label of each intersection point is its multiplicity. For the translation vector $v_1$, the sublattice index is $2$. }
  \label{fig:stableIntersection}
\end{figure}

The following results from \cite{BogartJensenSpeyerSturmfelsThomas07,OssermanPayne13} allow us to compute tropicalizations for ideals that are decomposed into simpler summands, provided that the tropicalizations of each of the summands intersect transversely.

\begin{theorem}[Transverse intersection theorem]
\label{thm:transverse_intersection_theorem}
    Let $I,J\subseteq \mathbb{K}[x^\pm]$ be pure-dimensional complete intersection ideals, and suppose that $\Trop(I)$ and $\Trop(J)$ intersect transversely.  Then we have the following equality of sets:
    \begin{align}
        \label{eq:TransversIntersection_sets}
    \Trop(I+J)=\Trop(I)\cap \Trop(J).
    \end{align}
    Moreover, for every point $w$ in~\eqref{eq:TransversIntersection_sets}, the cells of the weighted polyhedral complexes $\Trop(I+J)$ and $\Trop(I) \cap \Trop(J)$ whose relative interiors contain $w$ have the same multiplicities. 
\end{theorem}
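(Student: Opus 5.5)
The plan is to reduce the transverse intersection theorem to the local structure of tropical varieties at a point, namely initial ideals and tropical multiplicities. Both $\Trop(I)$ and $\Trop(J)$ are balanced complexes of dimensions $\dim I$ and $\dim J$. Transversality makes $\Trop(I)\cap\Trop(J)$ pure of dimension $\dim I+\dim J-n$. The inclusion $\Trop(I+J)\subseteq\Trop(I)\cap\Trop(J)$ is immediate from the definition \eqref{eq:tropicalization}, because $V(I+J)=V(I)\cap V(J)$. The real content is therefore the reverse inclusion together with the multiplicity statement. Both are local at a point $w$, and we may take $w\in\QQ^n$ by density: rational points are dense in relative interiors of the rational cells.

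First, fix $w$ in the relative interior of cells $\sigma_1\in\Trop(I)$ and $\sigma_2\in\Trop(J)$ with $\dim(\sigma_1+\sigma_2)=n$. Replacing $I,J$ by their initial ideals $\mathrm{in}_w(I),\mathrm{in}_w(J)\subseteq\CC[x^\pm]$ reduces everything to constant-coefficient ideals. We use the standard facts from \cite[Ch.~3]{MaclaganSturmfels15}: $\Trop(\mathrm{in}_w I)$ is the star of $\Trop(I)$ at $w$, and the multiplicity of $\sigma_1$ equals the sum of multiplicities of the minimal primes of $\mathrm{in}_w(I)$. Since $w$ is interior to $\sigma_1$, $\mathrm{in}_w(I)$ is homogeneous with respect to the lattice $\ZZ^n_{\sigma_1}$. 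After a monomial change of coordinates (choosing a basis adapted to $\ZZ^n_{\sigma_1}$), $\mathrm{in}_w(I)$ is thus generated by polynomials in variables complementary to $\ZZ^n_{\sigma_1}$; its variety is a union of torus orbits of the subtorus $T_1$ with cocharacter lattice $\ZZ^n_{\sigma_1}$. The same holds for $J$ with a torus $T_2$.

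The key step, and the main obstacle, is to show that $\mathrm{in}_w(I)+\mathrm{in}_w(J)$ has a zero in $(\CC^*)^n$. More precisely, we need its zero set to consist of finitely many $T_1T_2$-orbits, counted with total multiplicity $\mult(\sigma_1)\mult(\sigma_2)[\ZZ^n:\ZZ^n_{\sigma_1}+\ZZ^n_{\sigma_2}]$. Since $T_1T_2=(\CC^*)^n$ up to the finite group $\ZZ^n/(\ZZ^n_{\sigma_1}+\ZZ^n_{\sigma_2})$, $V(\mathrm{in}_w I)$ and $V(\mathrm{in}_w J)$ are (unions of) orbits of complementary tori. They therefore meet properly, with intersection number computed by the lattice index, via the product structure $(\CC^*)^n\simeq T_1\times T_2$ modulo a finite group. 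Each such intersection is generically transverse, which yields both the equality of sets and the multiplicity formula \eqref{eq:intersection_multiplicities} at the level of initial ideals.

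The remaining difficulty is to lift from initial ideals back to $I+J$. We need $\mathrm{in}_w(I+J)=\mathrm{in}_w(I)+\mathrm{in}_w(J)$, or at least equality up to saturation and radical, so that $\mathrm{in}_w(I+J)$ is monomial-free and $w\in\Trop(I+J)$ with the correct multiplicity. Here the complete intersection hypothesis enters: we would show that a regular sequence for $I$ concatenated with one for $J$ forms a regular sequence whose initial forms still form a regular sequence, because the initial intersection has the expected codimension. This is a flatness argument for the family over the valuation ring degenerating $I+J$ to $\mathrm{in}_w(I)+\mathrm{in}_w(J)$, which is essentially the route of \cite{OssermanPayne13}. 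It gives $\Trop(I+J)\supseteq\Trop(I)\cap\Trop(J)$ and matches the multiplicities, since intersection multiplicities of proper complete intersections are preserved under flat degeneration.
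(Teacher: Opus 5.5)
The paper does not prove this theorem; it quotes it from \cite{BogartJensenSpeyerSturmfelsThomas07,OssermanPayne13}. Your plan is the standard one and essentially the route of \cite{OssermanPayne13}: localize at $w$ in the relative interiors of maximal cells $\sigma_1,\sigma_2$, pass to initial ideals, compute there, and lift back by a flat degeneration over the valuation ring. However, two of the steps you write down are wrong.

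\emph{The local model.} Since $T_1T_2=(\CC^*)^n$, the set $V(\mathrm{in}_w I)\cap V(\mathrm{in}_w J)$ is not a union of $T_1T_2$-orbits; that would be the whole torus. It is a union of cosets of $T_1\cap T_2$. This group has dimension $\dim\sigma_1+\dim\sigma_2-n$ and $[\ZZ^n:\ZZ^n_{\sigma_1}+\ZZ^n_{\sigma_2}]$ components. Accordingly, $(\CC^*)^n\cong(T_1\times T_2)/(T_1\cap T_2)$ is a quotient by a finite group only in the zero-dimensional case. Nonemptiness is immediate, since every $T_1$-coset meets every $T_2$-coset, so this is not the real obstacle. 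Initial schemes can be nonreduced, so the multiplicity count cannot come from ``generic transversality''. It comes from the splitting $\CC[x^\pm]/\mathrm{in}_w(I)\cong\CC[T_1]\otimes_{\CC}A_1$ in adapted coordinates, with $A_1$ Artinian of length $\mult(\sigma_1)$, and likewise for $J$.

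\emph{The lifting step: this is the genuine gap.} Initial forms of a generating regular sequence of $I$ need neither generate $\mathrm{in}_w(I)$ nor form a regular sequence. For $I=\langle x-1,\,(x-1)+t(y-1)\rangle$ and $w=0$, both initial forms equal $x-1$. The expected codimension of $V(\mathrm{in}_w I)\cap V(\mathrm{in}_w J)$ gives no control over the ideal generated by the initial forms of your chosen generators.

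Choosing better generators does not help in general, because $\mathrm{in}_w(I)$ need not be a complete intersection even when $I$ is. The ideal of the points $(1,1),(1+t,1),(1+2t,1),(1,1+t)\in(\mathbb{K}^*)^2$ is generated by two elements. Its initial ideal at $0$ is $\langle (x-1)^3,(x-1)(y-1),(y-1)^2\rangle$, which is not a complete intersection. Adding a variable $z$ and taking $J=\langle z-1\rangle$ gives a transverse instance of the theorem in which your mechanism cannot work. Also, equality of $\mathrm{in}_w(I+J)$ and $\mathrm{in}_w(I)+\mathrm{in}_w(J)$ merely up to radical would give the set equality but lose the multiplicities.

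\emph{What works instead.} Substitute $x\mapsto t^wx$ so that $w=0$, and let $\mathcal{O}\subseteq\mathbb{K}$ be the valuation ring. Take the algebras $\mathcal{O}[x^\pm]/(I\cap\mathcal{O}[x^\pm])$ and $\mathcal{O}[y^\pm]/(J(y)\cap\mathcal{O}[y^\pm])$, where $J(y)$ is $J$ in a second set of variables. They are torsion-free, hence flat and finitely presented. Their special fibres are $\CC[x^\pm]/\mathrm{in}_0(I)$ and $\CC[y^\pm]/\mathrm{in}_0(J)$, which are Cohen--Macaulay by the splitting above.

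Their tensor product over $\mathcal{O}$ is flat with Cohen--Macaulay special fibre. On that fibre the $n$ elements $x_iy_i^{-1}-1$ form a regular sequence, because the intersection has the expected dimension. The fibrewise flatness criterion for quotients by such sequences holds for finitely presented algebras even over this non-Noetherian base. It shows that $\mathcal{O}[x^\pm]/\big((I\cap\mathcal{O}[x^\pm])+(J\cap\mathcal{O}[x^\pm])\big)$ is $\mathcal{O}$-flat, i.e.\ $\mathrm{in}_0(I+J)=\mathrm{in}_0(I)+\mathrm{in}_0(J)$. Both the set equality and the multiplicities then follow from the corrected local model.

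What drives this argument is the Cohen--Macaulayness of the special fibres, which transversality itself supplies at such $w$. The complete intersection hypothesis on $I$ and $J$ does not make initial forms of generators regular.
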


An analog of the transverse intersection theorem for positive tropical varieties was developed in \cite[Theorem 4.11]{RoseTelek24}, motivated by the goal of deriving lower bounds on the maximal positive real roots of purely vertically parametrized systems. Here, we prove a version of \cite[Corollary 5.5]{RoseTelek24} geared towards computing lower bounds of the maximal number of positive \emph{nondegenerate} zeros of a parametric polynomial system.
For this purpose, we consider the subfield $\RR\{t\}$ of   $\RRt$ consisting of real \emph{algebraic} Puiseux series, as well as the multiplicative subgroup $\RR\{t\}_{>0} =\RRt_{>0} \cap \RR\{t \}$. The field  $\mathbb{R}\{t\}$ is  real closed and contains the real Puiseux series that are algebraic over the rational function field $\mathbb{R}(t)$.

The elements of $\mathbb{R}\{t\}$ represent semi-algebraic continuous functions to the right of the origin \cite[Theorem~3.14]{BasuPollackRoy}, so each $z \in \mathbb{R}\{t\}$ defines a real number $z(t) \in \mathbb{R}$ for all $t>0$ sufficiently small.
Thus, every  polynomial $f \in \mathbb{R}\{t\}[x^\pm]$ gives rise to a polynomial with real coefficients $f_t \in \mathbb{R}[x^\pm]$ when evaluated at a small value $t>0$. We extend this construction to ideals and write $I_t \coloneqq \langle f_{1,t}, \dots , f_{k,t} \rangle \subseteq \mathbb{R}[x^\pm]$ for the ideal generated by the evaluations of the generators of $I = \langle f_{1}, \dots , f_{k} \rangle \subseteq  \mathbb{R}\{t\}[x^\pm]$.

\begin{theorem}
\label{thm:RealTropLowerBounds}
   Let $I,J \subseteq  \mathbb{R}\{t\}[x^\pm]$ be prime ideals such that $V(I)$ and $V(J)$ have complementary dimension, $V(I+J)\cap \RRt^n_{>0}$ is finite, and all zeros of $I+J$  in $\RR\{t\}^n_{>0}$ are nondegenerate. Suppose that $\Trop(I)$ and $\Trop(J)$ intersect transversely and that all intersection points belong to maximal polyhedra of multiplicity one of both $\Trop(I)$ and $\Trop(J)$. Then, for all $t>0$ sufficiently small,  all zeros of $I_t+ J_t$ in $\mathbb{R}^n_{>0}$ are nondegenerate and  
   \begin{equation}\label{eq:trop_plus}
   \#  V_{>0}(I_t+ J_t)  \geq \#\left(\Trop^+(I) \cap \Trop^+(J) \right)\,.
    \end{equation}
  The inequality \eqref{eq:trop_plus} holds with equality if the right-hand side equals zero.
\end{theorem}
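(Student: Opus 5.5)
The plan is to combine the positive transverse intersection result \cite[Theorem~4.11]{RoseTelek24} with an implicit-function (Hensel-type) lifting argument, and then to specialize $t$ to small positive reals.

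\emph{Step 1 (lifting tropical points to positive zeros).} Write $\Trop^+(I)\cap\Trop^+(J)=\{w_1,\dots,w_N\}$. Since $\Trop(I)$ and $\Trop(J)$ intersect transversely, \Cref{thm:transverse_intersection_theorem} gives $\Trop(I+J)=\Trop(I)\cap\Trop(J)$, with multiplicities given by \eqref{eq:intersection_multiplicities}. By hypothesis each $w_k$ lies in maximal cells of multiplicity one, so $\mult(w_k)=[\ZZ^n:\ZZ^n_{\sigma_1}+\ZZ^n_{\sigma_2}]$. Next I invoke \cite[Theorem~4.11]{RoseTelek24}: at a transverse point lying in the positive parts of both tropicalizations, there is a zero $z_k\in V(I+J)\cap\RR\{t\}^n_{>0}$ with $\val(z_k)=w_k$. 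The underlying mechanism is as follows. After a torus change of coordinates $x\mapsto t^{w_k}x$, the initial ideals $\mathrm{in}_{w_k}(I)$ and $\mathrm{in}_{w_k}(J)$ are prime binomial-type ideals in complementary lineality directions. They meet transversely in finitely many points of $(\CC^*)^n$, and positivity of $w_k$ in both sets provides a positive real such point. A real implicit function theorem over the real closed field $\RR\{t\}$ then lifts it. Distinct $w_k$ give distinct $z_k$, so
\[
\#\big(V(I+J)\cap\RR\{t\}^n_{>0}\big)\ \ge\ N .
\]
Since $\RR\{t\}$ is real closed and $I,J$ are defined over it, the positive zeros in $\RRt_{>0}^n$ that are algebraic over $\RR\{t\}$ already lie in $\RR\{t\}^n_{>0}$. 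Hence working over $\RR\{t\}$ loses nothing.

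\emph{Step 2 (specialization to $t>0$ small).} The finite set $V(I+J)\cap\RR\{t\}^n_{>0}$, together with the assumption that all its points are nondegenerate, is described by a first-order formula over $\RR\{t\}$. Its parameters are the coefficients of the generators, which are semialgebraic germs at $0^+$. By Tarski transfer, equivalently via the correspondence \cite[Theorem~3.14]{BasuPollackRoy} between $\RR\{t\}$ and germs of semialgebraic functions, the following holds for all sufficiently small $t>0$: the set $V_{>0}(I_t+J_t)$ has exactly $\#(V(I+J)\cap\RR\{t\}^n_{>0})$ elements, and all of them are nondegenerate. The Jacobian minors of the generators are nonzero elements of $\RR\{t\}$ at each $z_k$, so they remain nonzero for small $t$.

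This transfer step needs care. The statement ``all zeros in the positive orthant are nondegenerate and there are exactly $K$ of them'' is a single first-order sentence with parameters in $\RR\{t\}$, hence it transfers. One should fix generators $f_1,\dots,f_k$ of $I+J$ and use the Jacobian of this fixed generating set. Combining the two steps yields \eqref{eq:trop_plus}.

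\emph{Step 3 (equality when the right-hand side vanishes).} If $\Trop^+(I)\cap\Trop^+(J)=\emptyset$, suppose $z\in V(I+J)\cap\RR\{t\}^n_{>0}$. Then $\val(z)\in\Trop^+(I)\cap\Trop^+(J)$ by the definition \eqref{eq:trop_pos}, which is a contradiction. So there are no positive zeros over $\RR\{t\}$, and by transfer $V_{>0}(I_t+J_t)=\emptyset$ for small $t$.

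I expect the main obstacle to be Step 1: ensuring that each point of the positive intersection genuinely lifts to a \emph{real positive} zero, rather than merely to a complex zero with positive-looking valuation. I would handle this by quoting \cite[Theorem~4.11]{RoseTelek24} directly. If a self-contained argument is needed, I would check that multiplicity one of the maximal cells makes the initial forms reduced and the initial intersection transverse. That way the lifted zero is unique and conjugation-invariant, hence real, with positive leading coefficients inherited from the positive initial solution.
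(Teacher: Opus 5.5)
Your proposal is correct and follows essentially the same route as the paper. In both, \cite[Theorem~4.11]{RoseTelek24} identifies $\Trop^+(I)\cap\Trop^+(J)$ with $\Trop^+(I+J)=\val\big(V(I+J)\cap\RRt^n_{>0}\big)$, using finiteness. Distinct valuations then give distinct zeros, the count together with nondegeneracy is transferred to small $t>0$ via semialgebraic germs (as in \cite[Proposition~3.17]{BasuPollackRoy}), and the equality case comes from the empty intersection.

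The only difference is the passage from $\RRt$ to $\RR\{t\}$. You argue via real-closedness and algebraicity of the zeros, which needs $V(I+J)$ to be finite; this does follow from the transverse intersection theorem you invoke. The paper instead does this step directly by Tarski--Seidenberg.
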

\begin{proof}
The statement is a consequence of the following equalities and inequalities: 
\begin{align*} 
& \# \big(\Trop^+(I) \cap \Trop^+(J)\big)  \overset{(1)}{=} \#  \Trop^+(I+J) 
\overset{(2)}{=} \# \val\big( V(I+J) \cap \RRt^n_{>0}\big) \\ &\qquad  
\overset{(3)}{\leq}  \# \big(V(I+J) \cap \RRt^n_{>0}\big)\, 
 \overset{(4)}{=} \# ( V(I+J) \cap \mathbb{R}\{t\}^n_{>0}) 
 \overset{(5)}{=} \# V_{>0}(I_t+J_t) \quad \text{for $t>0$ small}\, . 
\end{align*}
 Specifically: (1) follows from \cite[Theorem~4.11]{RoseTelek24}; (2) holds as $V(I+J)\cap \RRt^n_{>0}$ is finite; (3) is immediate; (4) follows from the Tarski--Seidenberg Principle (also called the Transfer Principle for real closed fields) \cite[Theorem~2.80]{BasuPollackRoy}, as both $\RRt$ and $\RR\{t\}$ are real closed fields and the cardinality of a finite set can be formulated as a first-order sentence in the language of ordered fields. 
(5) follows from \cite[Proposition~3.17]{BasuPollackRoy}, as  all zeros  of  $I+J$ in $\mathbb{R}\{t\}^n_{>0}$ are nondegenerate, and  nondegeneracy can be formulated as a first-order sentence. 
Finally, the last statement in the theorem follows by realizing that (3) holds with equality if $\Trop^+(I) \cap \Trop^+(J)=\emptyset$.
\end{proof}

\begin{example}
    Consider the ideals $I = \langle x_1+x_2-1\rangle$ and $J^h = \langle x_1\, x_2 - t^h \rangle$ in $\mathbb{R}\{t\}[x_1^\pm,x_2^\pm]$ for $h\in \QQ$. A simple computation shows that
    \[ V(I+J^h) = \left\{ \Big(\tfrac{1 + \sqrt{1-4t^h} }{2},\tfrac{1 - \sqrt{1-4t^h} }{2}\Big),\Big(\tfrac{1 - \sqrt{1-4t^h} }{2},\tfrac{1 + \sqrt{1-4t^h} }{2}\Big) \right\}.\] 
    Thus, for $h = 1$ and $t > 0$ sufficiently small, $V(I_t+J^h_t)$ has two points in $\mathbb{R}_{>0}^2$,  and for $h = -1$ and $t > 0$ sufficiently small, we have $V_{>0}(I_t+J^{h}_t) = \emptyset$.

Let us recover the same result with tropical methods by applying \Cref{thm:RealTropLowerBounds}. 
In \Cref{ex:polyhedralStructureAffine}, we computed $\Trop^+(\langle x_1+x_2-1\rangle)$, while  
\Cref{lem:binomial} tells us that 
\[ \Trop^+( \langle x_1\, x_2 - t^h \rangle)=\Trop(\langle x_1\, x_2 - t^h \rangle) =\{w\in \RR^2 \mid w_1+w_2=h\}\,.\]
The intersection $\Trop^+(\langle x_1+x_2-1\rangle) \cap \Trop^+( \langle x_1\, x_2 - t^h\rangle)$ for $h=1$ consists of two points (right-hand side of \Cref{fig:stableIntersection}), and is empty for $h=-1$ (left-hand side of \Cref{fig:stableIntersection}). 
Hence, by \Cref{thm:RealTropLowerBounds},  for $t>0$ small enough, $V_{>0}(I_t+J^h_t)$ has two points if $h=1$ and none if $h=-1$. 

This example illustrates that, unlike the stable intersection of tropical varieties, the number of intersection points of positive tropical varieties may depend on the translation vector.
\end{example}

We conclude this subsection with a technical lemma capturing the key step in the proof of \Cref{thm:monomial_reembedding_bounds,thm:bound_purevertical}.

\begin{lemma}\label{lem:technical}
Let $I,J\subseteq \CC[p][x^\pm]$ be prime  complete intersection ideals for parameters $p=(p_1,\dots,p_k)$ 
such that $V(I)$ and $V(J)$ have complementary dimension and $I+J$ has generically nondegenerate zeros. Let $\Trop(I)$ and $\Trop(J)$ be their tropical varieties for generic complex parameter values. 
\begin{enumerate}[label=(\roman*)]
    \item Assume that there exists $\hat{h}\in \QQ^n$ such that $\Trop(I)+\hat{h}$ and $\Trop(J)$ intersect transversely,  and $\Trop(I)+\hat{h}=\Trop(I_p)$ and $\Trop(J)=\Trop(J_p)$ for all $p$ in a  Zariski dense subset $\mathcal{P}$ of $\mathbb{K}^k$.
Then
\begin{equation}\label{eq:technical}
\grc(I+J)= \deg\big((\Trop(I)+\hat{h}) \cap \Trop(J)\big) \, .
\end{equation}
\item Assume that there exists $h\in \QQ^n$ such that for all $\varepsilon>0$ small enough and
$\hat{h} = \varepsilon \, h$,  
$\Trop(I)+\hat{h}$ and $\Trop(J)$ intersect transversely and \eqref{eq:technical} holds. Then
\[ \grc(I+J) = \Trop(I) \cdot \Trop(J)\, . \]
\end{enumerate}
\end{lemma}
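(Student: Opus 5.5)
For part (i), the plan is to pass to Puiseux series parameters and combine \Cref{thm:transverse_intersection_theorem} with \Cref{thm:zero-dimensional_case}.

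Since $I+J$ has generically nondegenerate zeros, the remark in \Cref{subsec:preliminaries} on field extensions gives a Zariski open dense set $\mathcal{V}\subseteq\mathbb{K}^k$ such that for $p\in\mathcal{V}$ the ideal $I_p+J_p$ has exactly $\grc(I+J)$ zeros in $(\mathbb{K}^*)^n$, all nondegenerate. We may also shrink $\mathcal{V}$ so that $I_p$ and $J_p$ remain prime complete intersections of the expected complementary dimensions. Their generators specialize, and primality and dimension are generic properties; this is the point I expect to need some care.

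Because $\mathcal{P}$ is Zariski dense, we can pick $p\in\mathcal{P}\cap\mathcal{V}$. For such $p$ we have $\Trop(I_p)=\Trop(I)+\hat h$ and $\Trop(J_p)=\Trop(J)$, and these intersect transversely by hypothesis. \Cref{thm:transverse_intersection_theorem} then gives $\Trop(I_p+J_p)=(\Trop(I)+\hat h)\cap\Trop(J)$ as weighted complexes, with matching multiplicities. Here we use that translating a complex does not change its weights, so the cells of $\Trop(I_p)$ carry the weights of $\Trop(I)$. Finally, \Cref{thm:zero-dimensional_case} applied to the zero-dimensional ideal $I_p+J_p$ yields
\[
\grc(I+J)=\#V(I_p+J_p)=\deg\big(\Trop(I_p+J_p)\big)=\deg\big((\Trop(I)+\hat h)\cap\Trop(J)\big).
\]
One subtlety is that zeros counted over $\mathbb{K}$ must correspond to the generic complex count. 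This is exactly the field-extension statement quoted above.

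For part (ii), the transverse intersection $(\Trop(I)+\varepsilon h)\cap\Trop(J)$ for small $\varepsilon>0$ has total degree $\grc(I+J)$ by \eqref{eq:technical}. Transversality for all small $\varepsilon$ means the translate $\Trop(I)+\varepsilon h$ meets $\Trop(J)$ transversely along the direction $h$. By the definition \eqref{eq:shift} of stable intersection, applied with the roles of the two complexes as in the definition (translating $\Trop(I)$ by $\varepsilon h$ is the same as translating $\Trop(J)$ by $-\varepsilon h$, and $\wedge$ is symmetric), the limit of these intersections is $\Trop(I)\wedge\Trop(J)$. Total multiplicity is preserved in the limit, since multiplicities of points with the same limit are added. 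Hence
\[
\Trop(I)\cdot\Trop(J)=\deg\big((\Trop(I)+\varepsilon h)\cap\Trop(J)\big)=\grc(I+J).
\]

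The main obstacle is in (i): guaranteeing that a single Puiseux parameter $p$ can be chosen that simultaneously realizes the prescribed tropicalizations, the generic root count, and the hypotheses of \Cref{thm:transverse_intersection_theorem} (pure-dimensional complete intersections). This is handled by the density of $\mathcal{P}$ intersected with the Zariski open set $\mathcal{V}$.
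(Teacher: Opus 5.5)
Your proposal is correct and follows the paper's proof essentially step for step. You choose $p\in\mathcal{P}$ inside the Zariski open locus where $I_p+J_p$ has $\grc(I+J)$ nondegenerate zeros, chain \Cref{thm:zero-dimensional_case} with \Cref{thm:transverse_intersection_theorem}, and obtain (ii) by letting $\varepsilon\to 0$ in the definition \eqref{eq:shift} of stable intersection.

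One small correction concerns your side remark that primality is a generic property. It is not in general preserved under generic specialization: $\langle x^2-p\rangle$ is prime in $\CC[p][x^{\pm}]$, yet $\langle x^2-p_0\rangle$ is not prime over $\mathbb{K}$ for any $p_0\neq 0$. You do not need it, though, since \Cref{thm:transverse_intersection_theorem} only asks for $I_p$ and $J_p$ to be pure-dimensional complete intersections, which is also all the paper implicitly uses.
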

\begin{proof}
(i) By denseness of $\mathcal{P}$, we can choose $p\in \mathcal{P}$ such that all zeros of $I_p+J_p$ are nondegenerate. Then it holds that
\begin{align*}
\grc(I+J)=\#V(I_p+J_p)=\deg(\Trop(I_p+J_p))=\deg((\Trop(I)+\hat{h})\cap \Trop(J))\,,
\end{align*}
where the
second equality follows from \Cref{thm:zero-dimensional_case} and the third follows from the transverse intersection theorem (\Cref{thm:transverse_intersection_theorem}).

(ii) Take the limit as $\varepsilon\rightarrow 0$ of both sides of \eqref{eq:technical} and use \eqref{eq:shift} and \eqref{eq:intersection}.
\end{proof}

\subsection{Generic and maximal root counts}
\label{subsec:root_bounds}
We are now equipped with the tools  to 
study the root counts $\grc(F)$ and $\mrc_{>0}(F)$ for an augmented vertically parametrized system using its toric reembedding  \eqref{eq:monomial_reembedding}.
The key ingredients are the tropicalizations of the linear and binomial parts of the reembedding together with \Cref{thm:RealTropLowerBounds} and \Cref{lem:technical}. 

Consider a square augmented vertically parametrized system 
\[ F=\big(\,C  x^M ,\: Lx-b\,\big)\in \CC[a,b][x^\pm]^n\]
with  vertical matrix $C\in \CC[a]^{s\times r}$, matrix $L\in\CC^{d\times n}$ of full row rank, and exponent matrix $M\in \mathbb{Z}^{n\times r}$. 
We obtain after the toric reembedding from \eqref{eq:monomial_reembedding} 
the following ideals in \eqref{eq:monomial_linbin}:
\begin{equation*} 
I^{\bin}\coloneqq \langle y - x^{M}  \rangle \subseteq \CC[y^\pm,x^\pm]\,, \qquad  I^{\lin} \coloneqq \langle C y  , Lx - b  \rangle \subseteq \CC[a,b][y^\pm,x^\pm]\, . 
\end{equation*}
The following holds for their tropicalizations as discussed previously:
\begin{itemize}
    \item Since $I^{\bin}$ is prime, \Cref{lem:binomial} tells us that 
    \[ \Trop(I^{\bin})=\Trop^+(I^{\bin})=\ker\,  [\,\id_{r} \mid -M^\top\,] = \rowspan\,[\,M\mid \id_n\,] \, . \] 
    \item For each $a\in \CC^m$ and $b\in\CC^d$,  $\Trop(I^{\lin}_{a,b})$ is determined by the matroid of
    \begin{equation}
    \label{eq:block_matrix_for_linear_part}
    \begin{bmatrix}C_a & 0 & 0 \\ 0 & L &-b\end{bmatrix}\in\CC^{n\times (r+n+1)}\, . 
    \end{equation}
           By \Cref{lem:genericMatroid}, there is a nonempty Zariski open subset $\mathcal{B} \subseteq \CC^{m+d}$ such that $\Trop(I_{a,b}^{\lin})$ is constant for $(a,b)\in\mathcal{B}$.
           We denote this generic tropicalized affine space by $\Trop(I^{\lin})$. 
\end{itemize}

\smallskip
We introduce the partition 
$[m] = J_1 \sqcup \dots \sqcup J_r$, such that for each $k\in [r]$,  
\begin{equation*}\label{eq:partition}
i\in J_k \quad \Leftrightarrow\quad \text{$a_i$ appears in the $k$-th column of $C$}\, ,
\end{equation*}
and consider the embedding
\begin{equation*}\label{eq:nu}
\eta \colon \mathbb{K}^r \hookrightarrow \mathbb{K}^m, \qquad \eta(z)_j= z_k \quad \text{for all} \ j \in J_k \,.
\end{equation*}

\begin{lemma}\label{lem:nu}
    With the notation above, let $(a,b)\in \mathbb{K}^m \times \mathbb{K}^d$ and $z\in \mathbb{K}^r$. Then
    \begin{align*}
    \Trop(I^{\lin}_{a\star \eta(z), b}) & = \Trop(I^{\lin}_{a,b}) + (-\val(z),\mathbf{0}_n)  \\ 
    \Trop^+(I^{\lin}_{a\star \eta(z), b}) & = \Trop^+(I^{\lin}_{a,b}) + (-\val(z),\mathbf{0}_n) \, \qquad  \text{if }z\in \RRt^r_{>0}\,.
    \end{align*}
\end{lemma}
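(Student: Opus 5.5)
The plan is to realize the substitution $a\mapsto a\star\eta(z)$ as a torus rescaling of the $y$-coordinates. Since $C$ is vertical, every entry in the $k$-th column of $C$ is a linear form in the parameters $a_i$ with $i\in J_k$ only. Replacing $a$ by $a\star\eta(z)$ multiplies each such $a_i$ by $z_k$. The $k$-th column of $C_{a\star\eta(z)}$ is therefore $z_k$ times the $k$-th column of $C_a$, so that
\[ C_{a\star\eta(z)} = C_a \diag(z)\,. \]
Hence $C_{a\star\eta(z)}y = C_a(z\star y)$, while the block $Lx-b$ does not change. Consequently, a point $(y,x)\in(\mathbb{K}^*)^{r+n}$ lies in $V(I^{\lin}_{a\star\eta(z),b})$ if and only if $(z\star y,x)$ lies in $V(I^{\lin}_{a,b})$.

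This gives the bijection
\[ V(I^{\lin}_{a\star\eta(z),b}) \longrightarrow V(I^{\lin}_{a,b}),\qquad (y,x)\mapsto (z\star y,\,x)\,, \]
whose inverse multiplies $y$ componentwise by $z^{-1}$. Here I assume $z\in(\mathbb{K}^*)^r$, which is implicit since $\val(z)$ must be defined. Taking valuations, $\val(z\star y,x)=\val(y,x)+(\val(z),\mathbf{0}_n)$. The valuation images of the two varieties therefore differ by the translation $(-\val(z),\mathbf{0}_n)$. A translation is a homeomorphism of $\RR^{r+n}$, so it commutes with Euclidean closure, and the first set equality follows from \eqref{eq:tropicalization}.

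For the positive statement, take $z\in\RRt^r_{>0}$. Then the bijection and its inverse preserve the positive orthant $\RRt_{>0}^{r+n}$, because $\RRt_{>0}$ is a multiplicative group. The same argument, applied with \eqref{eq:trop_pos} in place of \eqref{eq:tropicalization}, gives the second equality.

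One subtlety concerns how the statement reads for $a,b$ over $\mathbb{K}$. The ideal $I^{\lin}_{a,b}$ is then an ideal of $\mathbb{K}[y^\pm,x^\pm]$, and $\Trop$ is taken directly via \eqref{eq:tropicalization}, so the argument above applies verbatim. If one also wants the polyhedral or weighted structure to match, note the following: the affine torus automorphism $(y,x)\mapsto(z\star y,x)$ sends the ideal $I^{\lin}_{a,b}$ to $I^{\lin}_{a\star\eta(z),b}$. Its tropicalization is the translation by $-\val(z)$, which preserves the structure of initial ideals and multiplicities, since these are invariant under torus translation (\cite[Section~3.2]{MaclaganSturmfels15}).

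I do not expect a real obstacle here. The only step needing care is checking the identity $C_{a\star\eta(z)}=C_a\diag(z)$ from verticality, and this uses precisely that each $a_i$ occurs in exactly one column and that the entries are homogeneous linear forms.
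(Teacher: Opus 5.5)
Your proposal is correct and follows essentially the same route as the paper: the verticality identity $C_{a\star\eta(z)}=C_a\diag(z)$ gives the coordinate rescaling $(y,x)\mapsto(z\star y,x)$ between the two varieties, and both equalities then follow directly from \eqref{eq:tropicalization} and \eqref{eq:trop_pos}. Your added remarks (translation commuting with closure, $\RRt_{>0}$ being a multiplicative group) just spell out steps the paper leaves implicit.
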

\begin{proof}
By definition of the embedding $\eta$, 
we have $C_{a\star \eta(z)} =C_{a} \diag(z)$ and hence
\[ (y,x)\in V\big(I^{\lin}_{a\star \eta(z),b}\big) \quad \Leftrightarrow \quad (z \star y,x)\in V(I^{\lin}_{a,b})\,. \]
The two equalities follow now from \eqref{eq:tropicalization} and \eqref{eq:trop_pos}. 
  \end{proof}

With this in place, we can state the main result of this section. Part (i) formalizes \cite[Example 6.7]{HelminckRen22}, thereby generalizing \cite[Proposition 6.5]{HelminckRen22} to \emph{augmented} vertically parametrized systems. Similarly, part (ii) generalizes \cite[Theorem 6.1]{RoseTelek24} to augmented vertically parametrized systems and  strengthens it by providing lower bounds on the number of \emph{nondegenerate} positive real solutions.  Additionally, our proof is more elementary, using only basic theory of algebraic and tropical varieties.

\begin{theorem}[Complex and positive root bounds]
\label{thm:monomial_reembedding_bounds}
Let $F=(C x^M ,\: L x - b)\, \in \CC[a,b][x^\pm]^n$ be a square augmented vertically parametrized system
with   $n=s+d$, $C\in \CC[a]^{s\times r}$ vertical, 
$L\in \mathbb{C}^{d\times n}$ of full row rank, and  $M\in \mathbb{Z}^{n\times r}$. 
Let $I^{\lin}$ and $I^{\bin}$ be as in \eqref{eq:monomial_linbin} and $\mathcal{B}\subseteq \CC^m\times\CC^d$ be the  generic matroid locus of $I^{\lin}$.  

Suppose $h\in\QQ^r$ is such that
$\Trop(I^{\bin})+(h,\mathbf{0}_n)$ and $\Trop(I^{\lin})$ intersect transversely. 
For  $(a^*,b^*)\in\mathcal{B}$, the following statements hold:
\begin{enumerate}[label=(\roman*)]
    \item We have
    \[\grc(F)=\deg\big((\Trop(I^{\bin})+(h,\mathbf{0}_{n} )) \cap \Trop(I^{\lin}_{a^*,b^*})\big)\, .\]

   \item 
  If the coefficients of $F$ are real
    and 
    $(a^*,b^*)\in\RR^m_{>0}\times L(\RR^n_{>0})$, then 
    \begin{align}
\label{eq:num_pos_points1} 
    \mrc_{>0}(F)\geq\#\big( (\Trop^+(I^{\bin})+(h,\mathbf{0}_{n} ))\cap \Trop^+(I_{a^*,b^*}^{\lin})\,\big) \, . 
    \end{align}
   Additionally, the right-hand side of \eqref{eq:num_pos_points1} is a lower bound for the number of nondegenerate positive zeros of  $F_{a\star \eta(t^h),b}$ for all $t>0$ sufficiently small, and generic parameters $(a,b)$ in a  neighbourhood of $(a^*,b^*)$.  
\end{enumerate}
\end{theorem}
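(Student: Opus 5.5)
Both parts rest on the toric reembedding \eqref{eq:monomial_reembedding}. By the bijection \eqref{eq:monomial_correspondence}, nondegenerate zeros of $F_{a,b}$ correspond to nondegenerate zeros of $I^{\bin}+I^{\lin}_{a,b}$. The idea is to trade the translation $(h,\mathbf{0}_n)$ of the binomial tropicalization for a rescaling of the parameters $a$ via \Cref{lem:nu}.

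\textbf{Part (i).} We apply \Cref{lem:technical}(i) with $I=I^{\lin}$ (parametrized by $(a,b)$) and $J=I^{\bin}$ (parameter-free). Both ideals are prime complete intersections: $I^{\lin}$ is affine of codimension $n$ for generic parameters, and $I^{\bin}$ is a prime binomial ideal of codimension $r$. Their dimensions are $r$ and $n$ in $(\mathbb{K}^*)^{r+n}$, which are complementary. Generic nondegeneracy of $I^{\bin}+I^{\lin}$ follows from that of $F$ \cite[Theorem~3.7]{FeliuHenrikssonPascual2025vertical} together with \eqref{eq:monomial_correspondence}.

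Since the statement translates the binomial part while \Cref{lem:technical} translates the first ideal, we use the symmetric form of transversality: $\Trop(I^{\bin})+(h,\mathbf{0}_n)$ meets $\Trop(I^{\lin})$ transversely if and only if $\Trop(I^{\lin})+(-h,\mathbf{0}_n)$ meets $\Trop(I^{\bin})$ transversely. The corresponding intersections are translates of each other, with the same multiplicities.

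We take $\mathcal{P}=\{(a\star\eta(t^{h}),b)\mid (a,b)\in\mathcal{B}\}\subseteq\mathbb{K}^{m+d}$. By \Cref{lem:nu}, for every such point
\[
\Trop(I^{\lin}_{a\star\eta(t^h),b})=\Trop(I^{\lin})+(-h,\mathbf{0}_n).
\]
The main obstacle is checking that $\mathcal{P}$ is Zariski dense in $\mathbb{K}^{m+d}$. This holds because $\mathcal{P}$ is the image of the Zariski dense set $\mathcal{B}\subseteq\CC^{m+d}\subseteq\mathbb{K}^{m+d}$ under the automorphism of $\mathbb{K}^{m+d}$ that scales coordinates by nonzero constants. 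A polynomial over $\mathbb{K}$ vanishing on $\CC^{m+d}$ is zero, since we may expand its coefficients in $t$.

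\Cref{lem:technical}(i) then gives
\[
\grc(F)=\deg\bigl((\Trop(I^{\lin})-(h,\mathbf{0}_n))\cap\Trop(I^{\bin})\bigr).
\]
Translating by $(h,\mathbf{0}_n)$ yields the claimed formula, using $\Trop(I^{\lin})=\Trop(I^{\lin}_{a^*,b^*})$ for $(a^*,b^*)\in\mathcal{B}$.

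\textbf{Part (ii).} We apply \Cref{thm:RealTropLowerBounds} over $\RR\{t\}$ with
\[
I=I^{\lin}_{a^*\star\eta(t^h),b^*},\qquad J=I^{\bin}.
\]
Both are prime. Transversality holds as in (i). All multiplicities equal one: this follows from \Cref{lem:binomial} for $J$ and from \Cref{thm:TropLinearSpace} for $I$.

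Finiteness and nondegeneracy of the positive zeros of $I+J$ come from (i). For this we need $(a^*\star\eta(t^h),b^*)$ to lie in the generic locus over $\mathbb{K}$. By transfer it suffices that $(a^*\star\eta(t^h),b^*)$ avoids the discriminant-type proper subvariety where degeneracy occurs. If this fails, we first perturb $(a^*,b^*)$ slightly. Since $(a^*,b^*)\in\mathcal{B}$, the set $\mathcal{B}^\pm_{(a^*,b^*)}$ is Euclidean open by \Cref{lem:genericMatroid}(ii), so the perturbation keeps $\Trop^+$ unchanged.

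By \Cref{lem:nu},
\[
\Trop^+(I)=\Trop^+(I^{\lin}_{a^*,b^*})-(h,\mathbf{0}_n),
\]
and $\Trop^+(J)=\Trop(J)$ by \Cref{lem:binomial}. Translating back, the count in \eqref{eq:num_pos_points1} bounds $\#V_{>0}(I_t+J_t)$ from below for small $t>0$, and all these zeros are nondegenerate. Via \eqref{eq:monomial_correspondence}, these are exactly the nondegenerate positive zeros of $F_{a^*\star\eta(t^h),b^*}$.

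For each such $t$, the parameters $a^*\star\eta(t^h)$ are positive and $b^*\in L(\RR^n_{>0})$, so $\mrc_{>0}(F)$ is at least this count. The same argument applies to generic $(a,b)$ in the Euclidean neighbourhood $\mathcal{B}^\pm_{(a^*,b^*)}$, which gives the final assertion.
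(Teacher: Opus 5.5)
Your proposal is correct and follows essentially the paper's own proof. You trade the shift $(h,\mathbf{0}_n)$ for the rescaling $\mu(a,b)=(a\star\eta(t^h),b)$ via \Cref{lem:nu}, apply \Cref{lem:technical}(i) with $\mathcal{P}=\mu(\mathcal{B})$, and for (ii) perturb within the Euclidean open set $\mathcal{B}^\pm_{(a^*,b^*)}\cap(\RR^m_{>0}\times L(\RR^n_{>0}))$ so that $\mu(a,b)$ lies in the generic nondegenerate locus over $\mathbb{K}$ before invoking \Cref{thm:RealTropLowerBounds} for $I^{\lin}_{\mu(a,b)}$ and $I^{\bin}$. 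The existence of such a perturbation (that this locus pulls back under $\mu$ to a dense subset of any Euclidean open set of real parameters) is left implicit in both your argument and the paper's; it follows from the same $t$-expansion argument you gave for Zariski density in part (i).
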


\begin{proof}
 Recall  from  \eqref{eq:monomial_correspondence} that 
$\grc(F)=\grc(I^{\bin}+I^{\lin})$, and hence it is enough to focus on the latter.  Let $\mathcal{V}\subseteq \mathbb{K}^m\times \mathbb{K}^d$ be   a nonempty Zariski open subset  of parameters 
$(a,b)$ such that $\grc(F)=\#V(F_{a,b})$ (which also implies that all zeros of $F_{a,b}$ are nondegenerate).
Consider the automorphism $\mu$ of $\mathbb{K}^m\times \mathbb{K}^d$ defined by $\mu(a,b)=(a\star \eta(t^h),b)$. 
Using \Cref{lem:nu} with $z=t^h$, 
we have
\[\Trop(I^{\lin}_{\mu(a,b)}) = \Trop(I^{\lin}) + (-h,\mathbf{0}_n)\quad\text{for all }(a,b) \in \mathcal{B}\, .\]
Since $\mu$ is an automorphism, $\mu( \mathcal{B})$ is Zariski dense in $\mathbb{K}^m\times \mathbb{K}^d$. By letting $\mathcal{P}=\mu( \mathcal{B})$, $\hat{h}=(-h,\mathbf{0}_n)$, $J=I^{\bin}$, and $I=I^{\lin}$ in \Cref{lem:technical}(i), we obtain
\begin{align*}
    \grc(I^{\bin}+I^{\lin}) & =  \deg(\Trop(I^{\bin}) \cap (\Trop(  I^{\lin})+(-h,\mathbf{0}_n)))
  \\
    &= \deg\big((\Trop(I^{\bin})+(h,\mathbf{0}_{n} )) \cap \Trop(I^{\lin}_{a^*,b^*})\big)\, .
\end{align*} 
This shows  part  (i).

For part (ii), as $L$ has full row rank, using \Cref{lem:genericMatroid} we consider the nonempty Euclidean open subset  
\[\mathcal{B}'\coloneqq\mathcal{B}_{(a^*,b^*)}^\pm \cap (\RR^{m}_{>0} \times L(\RR_{>0}^{n})) \subseteq \RR^{m}_{>0} \times L(\RR_{>0}^{n})\] 
  such that $\Trop^+(I^{\lin}_{a,b}) = \Trop^+(I^{\lin}_{a^*,b^*})$ if $(a,b)\in \mathcal{B}'$. 
 In particular, 
  $\mu(\mathcal{B}') \cap \mathcal{V}\neq \emptyset$ and $\mathcal{B}' \cap \mu^{-1}(\mathcal{V})$ is a nonempty Euclidean open subset of $\RR^{m}_{>0} \times L(\RR_{>0}^{n})$. For every
 $(a,b) \in \mu(\mathcal{B}') \cap \mathcal{V}$,
\Cref{lem:nu}   gives that $\Trop^+(I^{\lin}_{a,b}) = \Trop^+(I^{\lin}_{a^*,b^*})  + (-h,\mathbf{0}_n)$. Part (ii) follows by applying \Cref{thm:RealTropLowerBounds}  to the ideals $I^{\lin}_{a,b}$ and $I^{\bin}$ of $\RR\{t\}[y^\pm,x^\pm]$. \end{proof} 

\begin{remark}\label{rem:unbounded_regions}
The lower bound \eqref{eq:num_pos_points1} depends on the choices of $a^*$, $b^*$ and $h$, and an interesting question for future work is how to make these choices systematically to obtain the sharpest possible bound on $\mrc_{>0}(F)$. Likewise, one can ask in what cases it is possible for the bound in \eqref{eq:num_pos_points1} to be equal to $\mrc_{>0}(F)$. The last sentence in part (ii) of \Cref{thm:monomial_reembedding_bounds} gives an immediate obstruction in this regard, since \eqref{eq:num_pos_points1} many positive nondegenerate zeros must be attainable for ``extreme'' parameter values $a\star \eta(t^h)$, obtained by letting $t\to 0$.

As a simple example of this, consider the vertical system 
$F=(a_1 x_1 - a_2  + a_3,  a_4 x_2 + a_2- 2a_3)$. For $a\in \RR^4_{>0}$, the system clearly has at most one positive zero, and it has exactly one if and only if $2a_3>a_2>a_3$.
However, the right-hand side of \eqref{eq:num_pos_points1} will always be zero. To see this, assume the right-hand side is equal to one for some $a^*\in \RR^4_{>0}$ and $h\in \RR^4$.
As \eqref{eq:num_pos_points1} is invariant under small  perturbations of $h$, we can take $h$ with $h_2\neq h_3$. 
Then for some $a\in\RR^4_{>0}$ and all $t>0$ small enough, the system $F_{a\star t^h}$ will have one positive zero, meaning that $2a_3t^{h_3} >a_2t^{h_2} >a_3t^{h_3}$ holds for all $t>0$ arbitrarily small, which is a contradiction. 
\end{remark} 

From \Cref{thm:monomial_reembedding_bounds}, we obtain \Cref{alg:bounds} for computing the generic root count $\grc(F)$ and a lower bound for the maximal positive root count $\mrc_{>0}(F)$. 
An implementation is available as part of our Julia package \texttt{VerticalRootCounts.jl}, and its computational performance is showcased in \Cref{sec:implementation}.

\begin{algorithm}[h]
\vspace{0.2em}
\caption{Root bounds of augmented vertically parametrized systems}
\label{alg:bounds}
\KwIn{A square augmented vertically parametrized system $F=(C  x^M ,Lx-b)$.} 
\KwOut{The generic root count $\grc(F)$ and a lower bound of $\mrc_{>0}(F)$.} 
\vspace{0.2em}
Pick at random $a\in \QQ^m_{>0}, b\in L(\QQ_{>0}^n)$, and certify that evaluating the matrix of \eqref{eq:block_matrix_for_linear_part} at the point $(a,b)$ gives rise to the generic matroid via \Cref{lem:genericMatroid}.\\[0.2em]
Compute $\Trop(I^{\bin})$ and $\Trop(I^{\lin}_{a,b})$.\\[0.2em]
Pick at random $h\in\QQ^r$, and certify that $\Trop(I^{\bin})+(h,\mathbf{0}_n)$ and $\Trop(I^{\lin}_{a,b})$ intersect transversely via \eqref{eq:inter_transverse}.\\[0.2em]
Compute $\Sigma:=(\Trop(I^{\bin})+(h,\mathbf{0}_n))\cap \Trop(I^{\lin}_{a,b})$.\\[0.2em]
Count the number of points of $\Sigma$ that belong to $\Trop^+(I^{\lin}_{a,b})$. Return the number as a lower bound of $\mrc_{>0}(F)$, and return $\deg(\Sigma)$ as $\grc(F)$.\\[0.2em]
\end{algorithm}

Next, we argue that the vector $h$  in the third line in \Cref{alg:bounds} almost surely leads to transversely intersecting tropical varieties. 
By \cite[Proposition~3.6.12]{MaclaganSturmfels15}, for a generic translation~$\hat{h}$, the two tropical varieties will intersect transversely. Here we are claiming it is enough to consider translations of the form $\hat{h}=(h,\mathbf{0}_n)$.  
The following result is a slight strengthening of \cite[Proposition~3.6.12]{MaclaganSturmfels15} adjusted to our setting.

\begin{lemma}[Generic transversality]\label{lem:generic_transversality}
Let $\Sigma$ be a polyhedral fan in $\RR^{r+n}$  pure of dimension~$r$ and let $M\in \mathbb{Z}^{n\times r}$.  Then there exists a nonempty Zariski open cone $\mathcal{H} \subseteq \mathbb{R}^r$ 
such that the intersection $\Sigma \cap (\rowspan  [\,M\mid \id_n\,] +(h,\mathbf{0}_{n} ) )$ is transverse for all $h\in\mathcal{H}$.
\end{lemma}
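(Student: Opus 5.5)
The plan is to reduce transversality to finitely many linear-algebra conditions, one for each pair of cells, and to show that each failing condition cuts out a proper subspace in the $h$-variables. Write $W\coloneqq\rowspan[\,M\mid\id_n\,]$, an $n$-dimensional linear subspace of $\RR^{r+n}$, so that $\dim\Sigma+\dim W=r+n$. For $h\in\RR^r$ set $W_h\coloneqq W+(h,\mathbf{0}_n)$. Since $W_h$ is a single polyhedron (an affine space), transversality at a point $w\in\Sigma\cap W_h$ means the following. Take the cell $\sigma\in\Sigma$ with $w\in\relint\sigma$; then we need $\dim(\sigma+W)=r+n$, that is, $\Span(\sigma-w)+W=\RR^{r+n}$.

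We split the cones $\sigma\in\Sigma$ (finitely many, including faces) into two classes.
\begin{enumerate}[label=(\alph*)]
\item \emph{Good cones}: those with $\Span(\sigma)+W=\RR^{r+n}$. A cone $\sigma$ is a fan cell, so $\Span(\sigma-w)=\Span(\sigma)$ for $w\in\relint\sigma$. Hence intersection points lying in the relative interior of a good cone are automatically transverse, for every $h$.
\item \emph{Bad cones}: those with $\dim(\Span(\sigma)+W)<r+n$. For these we want $h$ such that $W_h\cap\Span(\sigma)=\emptyset$. Then $W_h\cap\relint\sigma=\emptyset$, so no intersection point can occur there.
\end{enumerate}

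For a bad cone, $W_h$ meets $\Span(\sigma)$ exactly when $(h,\mathbf{0}_n)\in\Span(\sigma)+W\eqqcolon U_\sigma$. So the set of bad $h$ is $\{h\in\RR^r\mid (h,\mathbf{0}_n)\in U_\sigma\}$, which is a linear subspace of $\RR^r$. The key step is to show this subspace is proper. Suppose instead that $(h,\mathbf{0}_n)\in U_\sigma$ for all $h$. Then $U_\sigma\supseteq\RR^r\times\{\mathbf{0}_n\}$. Now $W$ contains the vectors $(M^\top v,v)$ for $v\in\RR^n$, so every vector $(u,v)\in\RR^{r+n}$ can be written as $(u-M^\top v,\mathbf{0}_n)+(M^\top v,v)\in U_\sigma$. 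Thus $U_\sigma=\RR^{r+n}$, contradicting badness. In other words, $\RR^r\times\{\mathbf{0}_n\}$ and $W$ already span $\RR^{r+n}$, and this is exactly why restricting the translations to the first $r$ coordinates loses nothing.

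Let $\mathcal{H}$ be the complement in $\RR^r$ of the finite union of these proper linear subspaces, one for each bad cone. It is nonempty, Zariski open, and a cone, since a union of linear subspaces is invariant under scaling (one can even remove $0$). For $h\in\mathcal{H}$, every point of $\Sigma\cap W_h$ lies in the relative interior of some cone $\sigma$. That cone cannot be bad, so it is good and the intersection is transverse there. Hence the intersection is transverse for all $h\in\mathcal{H}$.

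The only subtle point is making sure the right notion of cell is used. The lineality space of $\Sigma$ may be nontrivial, and the cell containing $w$ in its relative interior must be taken as the minimal cone containing $w$. This is harmless, because both classes of cones are treated via their linear spans. Notably, the argument never uses the pure dimension $r$ of $\Sigma$, beyond the fact that the complementary-dimension setting is the one that makes the conclusion meaningful.
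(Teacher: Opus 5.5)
Your proof is correct and follows essentially the same route as the paper. The paper picks a nonzero normal vector $u_\tau\in(\Span(\tau)+V)^\perp$ for each non-transverse cone and removes the hyperplane $\Span(\overline{u}_\tau)^\perp$, which contains your exact bad subspace $\{h\mid (h,\mathbf{0}_n)\in\Span(\sigma)+W\}$; your observation that $\RR^r\times\{\mathbf{0}_n\}$ and $W$ together span $\RR^{r+n}$ is precisely the primal form of the paper's argument that $\overline{u}_\tau\neq 0$.
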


\begin{proof}
For brevity, we write 
$V \coloneqq \rowspan  [\,M\mid \id_n\,]\subseteq \RR^{r+n}$.
As the dimension of $V$ is $n$, the fan $\Sigma$ and the affine space $V +(h,\mathbf{0}_{n} )$ are of complementary dimension for all vectors~$h$. 
 We now follow the proof of \cite[Proposition~3.6.12]{MaclaganSturmfels15}. 
For every cone $\tau \in \Sigma$  such that 
$\dim (\tau + V) <r+n$, choose a nonzero vector $u_\tau \in (\Span(\tau)+ V)^\perp$.

If $\Sigma$ and $V+(h,\mathbf{0}_{n} )$ do not intersect transversely at a point $w \in \RR^{r+n}$, then there exists a cone 
$\tau \in \Sigma$ with $w \in \relint(\tau)$ such that 
$\dim (\tau+V) <r+n$.  Write $w \in V +(h,\mathbf{0}_{n})$ as $w = v + (h,\mathbf{0}_{n} )$ with $v\in V$. Using  that $u_\tau \cdot w = 0$ and $u_\tau \cdot v = 0$, we have $u_\tau \cdot (h,\mathbf{0}_{n} ) = 0$.
By letting $\overline{u}_\tau$ be the projection of $u_\tau$ onto the first $r$ coordinates,  it follows that
\begin{align*}
    \mathcal{H} \coloneqq \RR^r \setminus 
   \bigcup_{\tau \mid \dim (\tau + V) <n +r}  \Span(\overline{u}_\tau) ^\perp
\end{align*}
is Zariski open by construction, a cone, and for every $h \in \mathcal{H}$, the fan $\Sigma$ and the affine space $V+(h,\mathbf{0}_{n} )$ intersect transversely.  
By definition of the space $V$, using that  $u_\tau$ is orthogonal to $V$ we conclude that 
$\overline{u}_\tau\neq 0$ for all $\tau$ and hence $\mathcal{H}\neq \emptyset$. 
This concludes the proof. 
\end{proof}

\begin{theorem}[Generic root count]\label{thm:grc}
Let $F$ be a square augmented vertically parametrized system in $\CC[a,b][x^\pm]^n$ and  
let $I^{\lin}$ and $I^{\bin}$ be as defined in \eqref{eq:monomial_linbin}. Then
\[\grc(F)= \Trop(I^{\bin}) \cdot \Trop(I^{\lin})\, .\]
\end{theorem}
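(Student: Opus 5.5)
The plan is to combine \Cref{thm:monomial_reembedding_bounds}(i) with \Cref{lem:generic_transversality}, and then pass to the limit via \Cref{lem:technical}(ii). First apply \Cref{lem:generic_transversality} with $\Sigma=\Trop(I^{\lin})$. This is legitimate because $\Trop(I^{\lin})$ is the tropicalization of an affine ideal whose generic matroid is fixed, so by \Cref{thm:TropLinearSpace} it is a polyhedral fan. Its dimension is $(r+n)-n=r$, since the matrix \eqref{eq:block_matrix_for_linear_part} has rank $s+d=n$: $C$ has generically full row rank and $L$ has full row rank. The lemma produces a nonempty Zariski open cone $\mathcal{H}\subseteq\RR^r$ such that $\Trop(I^{\bin})+(h,\mathbf{0}_n)=\rowspan[\,M\mid\id_n\,]+(h,\mathbf{0}_n)$ meets $\Trop(I^{\lin})$ transversely for every $h\in\mathcal{H}$.

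Next fix a rational $h\in\mathcal{H}\cap\QQ^r$; such an $h$ exists because $\mathcal{H}$ is a nonempty Zariski open set. Since $\mathcal{H}$ is a cone, $\varepsilon h\in\mathcal{H}$ for every rational $\varepsilon>0$. Then \Cref{thm:monomial_reembedding_bounds}(i), applied with $\varepsilon h$ in place of $h$, gives
\[\grc(F)=\deg\big((\Trop(I^{\bin})+(\varepsilon h,\mathbf{0}_n))\cap\Trop(I^{\lin})\big)\]
for all such $\varepsilon$. Equivalently, after translating by $(-\varepsilon h,\mathbf{0}_n)$, the same count holds with the shift placed on $\Trop(I^{\lin})$ instead. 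Since $\grc(F)=\grc(I^{\bin}+I^{\lin})$ by \eqref{eq:monomial_correspondence}, this is exactly hypothesis \eqref{eq:technical} of \Cref{lem:technical}(ii). The shift vector is $\hat h=\varepsilon(h,\mathbf{0}_n)$, and transversality holds for all small $\varepsilon$.

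Finally, \Cref{lem:technical}(ii), or directly the definition \eqref{eq:shift} of stable intersection together with \eqref{eq:intersection}, yields
\[\grc(F)=\Trop(I^{\bin})\cdot\Trop(I^{\lin}).\]
One subtlety is that \eqref{eq:shift} requires transversality for all real $\varepsilon>0$ small, whereas we have it for rational $\varepsilon$. This is harmless, because $\varepsilon h\in\mathcal{H}$ for all real $\varepsilon>0$ and the cone property already gives transversality along the whole ray. Moreover, the total multiplicity of a transverse intersection of complementary-dimensional balanced complexes is locally constant in the shift, so the degree is the same for every $\varepsilon>0$ and equals the stable intersection number.

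The main obstacle I expect is verifying the standing hypotheses of \Cref{lem:technical}, namely primality and the complete-intersection property of $I^{\bin}$ and $I^{\lin}$, and generic nondegeneracy of $I^{\bin}+I^{\lin}$. These follow from $I^{\bin}$ being a prime binomial ideal with $r$ generators, $I^{\lin}$ being a linear ideal of rank $n$, and \cite[Theorem~3.7]{FeliuHenrikssonPascual2025vertical} together with the bijection \eqref{eq:monomial_correspondence}. In practice, however, this work was already absorbed in the proof of \Cref{thm:monomial_reembedding_bounds}, so the argument reduces to the cone and genericity bookkeeping above.
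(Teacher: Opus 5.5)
Your proposal is correct and matches the paper's proof: the paper likewise takes a rational $h$ in the cone $\mathcal{H}$ of \Cref{lem:generic_transversality} and notes that $\hat h=\varepsilon(h,\mathbf{0}_n)$ satisfies the hypotheses of \Cref{lem:technical}(ii), with \Cref{thm:monomial_reembedding_bounds}(i) supplying \eqref{eq:technical} for each such shift. Your added bookkeeping (rational versus real $\varepsilon$, the sign of the shift, constancy of the degree along the ray) fills in what the paper leaves implicit. The sign is harmless because $\mathcal{H}=-\mathcal{H}$, and the constancy holds because both tropical varieties are fans, so the intersection at $\varepsilon h$ is a rescaling of the one at $h$.
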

\begin{proof}
Let $h\in \QQ^r$ belong to the cone $\mathcal{H}$ of \Cref{lem:generic_transversality}. Then,
$\hat{h}=\varepsilon\, (h, \mathbf{0}_{n})$ satisfies the hypotheses of \Cref{lem:technical}(ii).
\end{proof}

In the following corollary,
the degree of a polynomial ideal refers to the number of intersection points (counted with multiplicity) of its variety with a generic linear space of complementary dimension.

\begin{corollary}[Generic degree]\label{cor:generic_degree}
Let $C x^M  \, \in \CC[a][x^\pm]^s$ be a  vertically parametrized system
with    $C\in \CC[a]^{s\times r}$ vertical, and  $M\in \mathbb{Z}^{n\times r}$. For $L\in \CC^{(n-s)\times n}$ such that $\mathcal{M}^*[L]$ is uniform, consider the augmented vertically parametrized system $F=(C x^M,Lx-b)$ and its associated ideals $I^{\lin}$ and $I^{\bin}$ as in \eqref{eq:monomial_linbin}. Then 
 the generic degree of the ideal 
 $\langle C_a x^M \rangle$  is
\begin{equation}\label{eq:generic_degree} 
\Trop(I^{\bin}) \cdot \Trop(I^{\lin})\, .
\end{equation}
\end{corollary}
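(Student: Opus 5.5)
The plan is to reduce the corollary to \Cref{thm:grc}. That theorem gives $\grc(F)=\Trop(I^{\bin})\cdot\Trop(I^{\lin})$, so it suffices to show that $\grc(F)$ equals the generic degree of $\langle C_a x^M\rangle$. By definition, for generic $a$ this degree is the number of points of $V_{\CC^*}(C_ax^M)$ lying on a generic affine linear space of dimension $s$, that is, on $V(Lx-b)$ for a generic $(n-s)\times n$ matrix $L$ and a generic $b$. Working in the torus is harmless here, since a generic linear space of complementary dimension meets the closure of the variety only in the torus. Also, generically all these intersection points are transverse, hence counted with multiplicity one and nondegenerate.

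First I need that the count does not depend on which $L$ is used, as long as $\mathcal{M}^*[L]$ is uniform. The idea is to use the remaining freedom. Replacing $L$ by $PL$ with $P$ invertible changes neither the system's zeros nor the matroid. By \Cref{thm:grc}, $\grc(F)$ depends only on $\Trop(I^{\lin})$, i.e.\ on the generic matroid of the block matrix \eqref{eq:block_matrix_for_linear_part}. This matroid is the direct sum of the matroid of $C$ and that of $[\,L\mid -b\,]$ with $b$ generic.

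Next I would check that if $\mathcal{M}^*[L]$ is uniform of rank $n-s$ on $[n]$, then for generic $b$ the matroid $\mathcal{M}^*[\,L\mid -b\,]$ is uniform of rank $n-s$ on $[n+1]$. The maximal minors avoiding the last column are nonzero by assumption. Those containing the last column are linear forms in $b$ with coefficients given by $(n-s-1)$-minors of $L$, and these forms are not identically zero because $L$ has full rank on every $(n-s)$-subset. Hence $\Trop(I^{\lin})$ is the same as for a truly generic matrix $L'$. Consequently $\grc(F)=\grc(F')$, where $F'=(Cx^M,L'x-b)$ with $L'$ generic.

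Finally I identify $\grc(F')$ with the degree. Consider the family over parameters $(a,L',b)$. For generic $a$ and generic $(L',b)$, the zeros of $F'$ are exactly the points of $V_{\CC^*}(C_ax^M)\cap V(L'x-b)$. By Bertini/Kleiman transversality these intersections are transverse, so the count equals the degree of $\langle C_ax^M\rangle$. The number is constant on a Zariski open set of parameters, by parameter continuation applied to the larger parameter space. It agrees with $\grc(F')$ computed with fixed generic $L'$, because the open set for $(a,L',b)$ has a dense set of slices for fixed $L'$.

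The main obstacle is making precise that fixing a specific uniform $L$, rather than a generic one, does not lower the count. The argument above sidesteps this by routing through \Cref{thm:grc}: the root count depends only on the tropical linear space. So the matroid comparison in the second step is the step to check carefully.
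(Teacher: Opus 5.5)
Your argument is correct and has the same skeleton as the paper's proof. Both identify the generic degree of $\langle C_a x^M\rangle$ with $\grc(C x^M, L'x-b)$ for a generic $L'$. Both then use \Cref{thm:grc} together with the fact that $\Trop(I^{\lin})$ depends only on the generic matroid of the block matrix \eqref{eq:block_matrix_for_linear_part}.

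You differ in how the fixed uniform $L$ is compared with other $L'$. The paper writes the degree as the maximum of $\grc(C x^M, L'x-b)$ over all $L'$, and asserts that $\Trop(\langle Lx-b\rangle)$ contains $\Trop(\langle L'x-b\rangle)$ for every $L'$. You instead show that $\mathcal{M}^*[\,L\mid -b\,]$ is uniform on $[n+1]$ for generic $b$: the minors through the last column are nonzero linear forms in $b$, because any $n-s-1$ columns of $L$ are independent. Hence $\Trop(I^{\lin})$ is literally the same as for a generic $L'$.

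Your equality is the sounder form of this step. A set-theoretic containment fails for special $L'$. Take $n=2$, $s=1$, $L=[1\ \ 1]$, $L'=[1\ \ 0]$ and $b\in\CC^*$. The point $(0,-1)$ lies in $\Trop(\langle x_1-b\rangle)$ but not in the tropical line $\Trop(\langle x_1+x_2-b\rangle)$. The paper's argument only needs the comparison for generic $L'$, and there it reduces to your statement.

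One point in your last step should be tightened. Kleiman--Bertini gives transversality only with the \emph{reduced} variety. But $\grc(F')$ counts zeros that are nondegenerate for the generators $(C_ax^M, L'x-b)$, while the degree is counted with multiplicity. These two counts agree only if the intersection points are nondegenerate zeros of $F'$, which requires generic radicality of $\langle C_a x^M\rangle$.

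As in the paper, you should invoke \cite[Theorem~3.7]{FeliuHenrikssonPascual2025vertical}, recalled in \Cref{subsec:root_counts_and_augmented_vertical_systems}. It gives two facts. First, $F'$ with fixed $L'$ is an augmented vertically parametrized system, so it has generically nondegenerate zeros. Second, $V_{\CC^*}(C_ax^M)$ is generically empty or of dimension $n-s$. This justifies that affine $s$-planes have complementary dimension, and the empty case gives $0=0$.
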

\begin{proof}
Let $G=C x^M$. 
By \cite[Theorem 3.7]{FeliuHenrikssonPascual2025vertical}, 
the varieties $V_{\CC^*}(G_a)$ are either generically empty or generically of dimension $n-s$. In the first case, the degree is generically zero and the intersection \eqref{eq:generic_degree} is zero. In the second case, 
note that for any fixed $L'\in\CC^{(n-s)\times n}$ of full row rank, the zeros of $\langle G_a,L' x-b \rangle$ are  nondegenerate for generic $(a,b)$. Hence, the generic degree of $G$ agrees with the maximal value of
$\grc(G,L'x-b)$ over all $L'$.
As $\mathcal{M}^*[L]$ is uniform, it follows from \Cref{lem:genericMatroid} that $\Trop(\langle Lx-b\rangle)$ contains $\Trop(\langle L'x-b\rangle)$ for any $L'$ and $b\in\CC^{n-s}$. The statement now follows from \Cref{thm:grc}. 
\end{proof}

For the special situation of a square vertically parametrized system, i.e., $n=s$ polynomials and variables, and no augmenting linear forms, i.e., $d=0$, the computation of the number $\grc(F)$ is easier as, in this case, one can eliminate the $x$ variables, and thus reduce the problem to a stable intersection in $\RR^r$ rather than $\RR^{r+n}$. 
We restrict to the scenario where $M$ has full row rank, as when this is not the case, the generic root count is zero. Recall that we defined the monomial map $\phi_M$ in \Cref{subsec:preliminaries} above. 

\begin{theorem}[Purely vertically parametrized systems]
\label{thm:bound_purevertical}
Let $C \in \mathbb{C}[a]^{n \times r}$ be   a vertical matrix, $a=(a_1,\dots,a_m)$, and $M \in \mathbb{Z}^{n \times r}$ be of rank $n$. Consider the ideals $I^{\lin} \coloneqq \langle\,C y\,\rangle$, with generic matroid locus $\mathcal{B}\subseteq\CC^m$, and $I_M\coloneqq I(\im(\phi_M))$ in $\CC[a][y^\pm]$ in the variables $y=(y_1,\dots,y_r)$. 
\begin{enumerate}[label=(\roman*)]
    \item  It holds that 
\begin{align*}
\grc ( I^{\lin} + I_M  ) &= \rowspan(M)\cdot \Trop(I^{\lin} )\, \\
\grc(C x^M  ) &=  \deg(\phi_M) \, \big(\rowspan(M)\cdot \Trop(I^{\lin})\big)\, .
\end{align*}
\item If $C$ has real coefficients, then 
\begin{align}
\label{eq:max_num_pos_points} \mrc_{>0}(C  x^M ) \leq  \rowspan(M)\cdot \Trop(I^{\lin})  \,, 
\end{align}
and  for  any $a^*\in \mathcal{B}\cap\RR^m_{>0}$
and $h\in\QQ^r$ such that 
$\rowspan(M)+h$ and $\Trop(I^{\lin}_{a^*})$ intersect transversely, it holds that
\begin{align}
\label{eq:num_pos_points} 
\mrc_{>0}(C x^M) \geq \# \big((\rowspan(M)+h)\cap \Trop^+(I^{\lin}_{a^*})\big)\, .
\end{align}
Additionally, the right-hand side of \eqref{eq:num_pos_points} is a lower bound for the number of nondegenerate positive zeros of $C_{a\star \eta(t^h)} x^M$ if $t>0$ is sufficiently small and for generic $a$ in a neighborhood of $a^*$. 
\end{enumerate}
\end{theorem}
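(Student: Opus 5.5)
The plan is to reduce everything to the machinery already developed for the augmented case, with $I_M$ playing the role of $I^{\bin}$ but living in $\RR^r$ only.

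First I relate the $x$-system to the $y$-system. Since $M$ has rank $n$, the monomial map $\phi_M\colon(\CC^*)^n\to\im(\phi_M)$ is a surjective group homomorphism with finite kernel of order $\deg(\phi_M)$; it is étale, hence unramified. Every zero $y$ of $I^{\lin}_a+I_M$ therefore has exactly $\deg(\phi_M)$ preimages $x$ with $C_ax^M=0$, and nondegeneracy is preserved in both directions. This gives $\grc(Cx^M)=\deg(\phi_M)\,\grc(I^{\lin}+I_M)$, so it suffices to prove the first equality of (i). Positive zeros behave differently: $\phi_M$ restricted to $\RR^n_{>0}$ is injective, since $\log$ turns it into the injective linear map $M^\top$. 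Hence positive zeros of $C_ax^M$ are in bijection with zeros of $I^{\lin}_a+I_M$ in $\RR^r_{>0}$, that is, with points of $V_{>0}(C_ay)\cap\mathcal{T}_M$. Nondegeneracy is again preserved.

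For the first equality I mimic \Cref{thm:monomial_reembedding_bounds} and \Cref{thm:grc}, with $I=I^{\lin}$ and $J=I_M$ in \Cref{lem:technical}. The ideal $I_M=\langle y^u-1: u\in\ker_\ZZ M\rangle$ is prime because $\im(\phi_M)$ is a torus, and it is a complete intersection. By \Cref{lem:binomial}, $\Trop(I_M)=\{w: u^\top w=0 \text{ for all } u\in\ker M\}=\rowspan(M)$ with multiplicity $1$. The ideal $I^{\lin}$ is linear, so its tropicalization is the Bergman fan of dimension $r-n$, which is complementary to $\dim\rowspan(M)=n$. Generic nondegeneracy of $I^{\lin}+I_M$ follows from that of $Cx^M$ (via \cite[Theorem~3.7]{FeliuHenrikssonPascual2025vertical}) together with the étale correspondence above. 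Next, by \Cref{lem:nu} with $d=0$, rescaling $a\mapsto a\star\eta(t^h)$ translates $\Trop(I^{\lin})$ by $-h$, and $\mu(\mathcal{B})$ is Zariski dense. Generic transversality holds for translations in all of $\RR^r$ by \cite[Proposition~3.6.12]{MaclaganSturmfels15}, so I do not need \Cref{lem:generic_transversality} here. Applying \Cref{lem:technical}(i) and then (ii) yields $\grc(I^{\lin}+I_M)=\rowspan(M)\cdot\Trop(I^{\lin})$.

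For (ii), the upper bound follows directly: the positive nondegenerate zeros of $C_ax^M$ inject into the nondegenerate zeros of $I^{\lin}_a+I_M$ by the injectivity on $\RR^n_{>0}$. Their number is therefore at most $\grc(I^{\lin}+I_M)$, which equals the intersection number by (i). For the lower bound I copy the proof of \Cref{thm:monomial_reembedding_bounds}(ii). First I take the Euclidean open set $\mathcal{B}^\pm_{a^*}\cap\RR^m_{>0}$ from \Cref{lem:genericMatroid}(ii) and intersect it with $\mu^{-1}(\mathcal{V})$. Then I use $\Trop^+(I_M)=\rowspan(M)$, which holds by \Cref{lem:binomial} with $c=\mathbf 1$, and apply \Cref{thm:RealTropLowerBounds} to $I^{\lin}_{a\star\eta(t^h)}$ and $I_M$ over $\RR\{t\}$. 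This gives the stated count of positive nondegenerate zeros in $y$ for small $t$, and these pull back bijectively to $x$. The multiplicity-one hypothesis of \Cref{thm:RealTropLowerBounds} holds because both tropical varieties carry all multiplicities equal to $1$.

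The main obstacle is the careful justification of the étale/degree correspondence. One must check that nondegeneracy of $x$ for $C_ax^M$ matches nondegeneracy of $x^M$ for $I^{\lin}_a+I_M$, using a minimal generating set of $I_M$ and the factorization of the Jacobian of $C_ax^M$ as $C_a\cdot D\phi_M$ with $D\phi_M$ injective. One must also confirm that the fibres of $\phi_M$ have exactly $\deg(\phi_M)$ points, which requires surjectivity onto $\im(\phi_M)$ as a torus of dimension $n$.
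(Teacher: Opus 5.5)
Your proposal is correct and essentially follows the paper's proof. It uses the $\deg(\phi_M)$-to-one, nondegeneracy-preserving correspondence $x\mapsto x^M$ (injective on $\RR^n_{>0}$), then \Cref{lem:nu} and \Cref{lem:technical} with $I=I^{\lin}$, $J=I_M$ and $\Trop(I_M)=\Trop^+(I_M)=\rowspan(M)$ for (i) and the upper bound in (ii). The only difference is cosmetic: for the lower bound you rerun the argument of \Cref{thm:monomial_reembedding_bounds}(ii) directly in $\RR^r$ with $I_M$, whereas the paper applies that theorem with $L$ empty and notes that the intersection count in $\RR^{r+n}$ equals the one in $\RR^r$.
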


\begin{proof} 
Let $F=C x^M$. 
Extending $\phi_M$ to $\mathbb{K}^n$,  we have that
$V( F_a )=\phi_M^{-1}\big( V(C_a y) \cap \im(\phi_M)\big)$ for all  
$a\in \mathbb{K}^m$, 
and thus, $\grc ( F) = \deg(\phi_M)  \grc\big(I^{\lin} + I_M \big)$.
An easy computation gives
\[\Trop(I_M)= \Trop^+(I_M)= \rowspan(M)\,, \]
with a single polyhedron of multiplicity 1. 
Observe that 
$I^{\lin}_a + I_M$ is a complete intersection ideal, and furthermore, 
if $x$ is a nondegenerate zero of $F_a$, then $x^M$ is nondegenerate as a zero of 
$I^{\lin}_a + I_M$. Therefore, 
as $F$ has generically nondegenerate zeros, so does $I^{\lin} + I_M$. 

Let $h\in \QQ^r$ be such that $\Trop(I^{\lin})$ and $\rowspan(M)+ \varepsilon\, h$
intersect transversely  for all $\varepsilon>0$ (see discussion after \eqref{eq:shift}). 
Similar to the proof of \Cref{thm:monomial_reembedding_bounds}, 
consider the automorphism $\mu$ of $\mathbb{K}^m$ defined by $\mu(a)=a\star \eta(t^h)$. Then, as $\mu(\mathcal{B})   \neq \emptyset$ is Zariski dense,  
using \Cref{lem:nu}  and  \Cref{lem:technical}(ii) with the shift $h$, and the ideals $I=I^{\lin}$ and $J=I_M$, we obtain
\begin{equation}\label{eq:toric}
\grc (I^{\lin}+ I_M) =  \rowspan(M)\cdot \Trop(I^{\lin})\, .
\end{equation}
This proves (i). 
To show \eqref{eq:max_num_pos_points} in part (ii), we use that the coefficients are real and that $\phi_M$ is injective when restricted to $\RR^n_{>0}$. Then  for all fixed $a^*\in \RR^m_{>0}$: 
\begin{align*}
&\#\big \{x\in \RR^n_{>0} \mid x  \text{ is a nondegenerate zero of }F_{a^*} \big\}  \\
&\hspace{2cm} =\# \{y\in \RR^r_{>0} \mid y  \text{ is a nondegenerate zero of } I_{a^*}^{\lin} + I_M  \} \\ 
&\hspace{2cm}\leq  \max_{a\in \CC^m} \ \# \{y \in (\CC^*)^r \mid y  \text{ is a nondegenerate zero of } I^{\lin}_{a} +I_M  \} \\
&\hspace{2cm}=\grc ( I^{\lin} + I_M)=\rowspan(M)\cdot \Trop(I^{\lin}) 
\end{align*}
by \eqref{eq:toric}. Equation \eqref{eq:max_num_pos_points}  thus holds.  
The second part of (ii) is simply  \Cref{thm:monomial_reembedding_bounds}(ii) 
 applied to the case where $L$ is empty, 
 as 
\[ \# (\rowspan\,[\,M\mid \id_n\,]+(h,\mathbf{0}_{n} ))\cap \Trop^+(\langle C_a y  \rangle) =\# (\rowspan(M)+h) \cap \Trop^+(\langle C_a y   \rangle)  \, , \]
where the set on the left-hand side belongs to $\RR^{r+n}$. 
\end{proof}

\begin{remark}
Alternatively, \Cref{thm:bound_purevertical}(i) can be derived from \Cref{thm:grc}, which, for the case when $L$ has zero rows, gives 
\[\grc(C  x^M )= \rowspan\,[\,M\mid \id_n\,]   \cdot \Trop(\langle Cy \rangle)\,.\]
It now remains  to show that
\[  \rowspan\,[\,M\mid \id_n\,]  \cdot \Trop(\langle Cy \rangle) =\deg(\phi_M) \big(\rowspan(M)\cdot \Trop(\langle Cy \rangle)\big)\,,\]
where we view $\Trop(\langle Cy \rangle)$  in $\RR^{r+n}$ on the left-hand side and in $\RR^r$ on the right-hand side. 
The key step in this is the computation of the relevant sublattice indices in \eqref{eq:intersection_multiplicities},
using the fact that, for $\sigma=\rowspan(M) \cap \ZZ^{r}$ and $\tau=\rowspan_\ZZ(M)$, it holds that
$[\ZZ^{r}_{\sigma} : \ZZ^{r}_{\tau}]  = \deg(\phi_M)$.
 \end{remark}

\begin{remark}[The parameter-separating presentation]\label{rem:vertical}
A convenient property of the parameter-separating presentation $(\overbar{C}\diag(a),\overbar{M})$ is that we may specialize all parameters $a_i$ to 1 in \Cref{thm:monomial_reembedding_bounds,thm:grc,thm:bound_purevertical}, since for any $a\in(\CC^*)^m$, it holds that
\[\mathcal{M}^*\begin{bmatrix}
    \, \overbar{C} \diag(a) & 0 & 0 \\
    0 & L & -b
\end{bmatrix}=\mathcal{M}^*\begin{bmatrix}
    \, \overbar{C} & 0 & 0 \\
    0 & L & -b
\end{bmatrix}\, . \]
\end{remark}

\begin{example}\label{ex:running_grc}
Applying \Cref{alg:bounds} to the system \eqref{eq:1-site_equations}, we obtain that the generic root count is three. If the matrix $L$ in \eqref{eq:minimal_presentation_1-site} is replaced by a matrix with uniform matroid, then  the generic root count is four. By \Cref{cor:generic_degree}, this is the generic degree of the ideal $\langle C x^M\rangle$.
\end{example}

\begin{remark} \label{rem:alpha}
The generic root count of systems of the form $(C x^M - c,Lx-b)$, where $Cx^M$ is a vertically parametrized system and $c$ is a \emph{fixed} constant term, can also be computed using 
tropical intersection numbers  as in \Cref{thm:monomial_reembedding_bounds,thm:bound_purevertical}. 
In particular, to prove \Cref{thm:toric_root_bound} below, we will use that if $C$ and $c$  are real, then
\[
 \resizebox{\linewidth}{!}{$\# \big((\rowspan(M)+h)\cap \Trop^+(\langle C_{a^*}y-c\rangle)\big)\leq    \mrc_{>0}(C x^M - c) 
    \leq \rowspan(M)\cdot \Trop(\langle Cy-c\rangle),$}
\]
  for  any $a^*\in  \RR^m_{>0}$ such that $\Trop(\langle C_{a^*}y-c\rangle)$ takes its generic value, 
and $h\in\QQ^r$ such that 
$\rowspan(M)+h$ and $\Trop(\langle C_{a^*}y-c\rangle)$ intersect transversely.

Moreover,  for any vertical matrix $C$ and vector $c$,
  the generic root count is given by
\[ \grc(C x^M - c)= \deg(\phi_M) \big( \rowspan(M)\cdot \Trop(\langle Cy-c\rangle)\big)      \,,  \]
and it also holds that  $\grc ( \langle Cy-c\rangle + I(\im(\phi_M))  )=\rowspan(M)\cdot \Trop(\langle Cy-c\rangle)$. 

To derive these bounds from \Cref{thm:bound_purevertical}, we consider the matrices $\widehat{M}\in \ZZ^{n\times (r+1)} $  obtained by appending a zero column to $M$, and 
 $\widehat{C} = [ C \mid - a_{r+1} c] \in \CC[a]^{n\times (r+1)}$ for an additional parameter $a_{r+1}$. The vertical system $F=\widehat{C}  x^{\widehat{M}}$ now satisfies that $V(F_{(a,1)})=V(C_a x^M - c)$ for all $a\in \CC^r$, and since 
 scaling $F$ does not change the zero set, 
  we obtain that $\grc(F)=\grc(C x^M - c)$. 

Let $\widehat{I}^\lin=\langle \widehat{C}   z \rangle$ with $z=(z_1,\dots,z_{r+1})$, and let $\pi$ denote the projection onto the first $r$ coordinates. For all $a\in \CC^r$, 
\Cref{thm:TropLinearSpace} and 
\Cref{rmk:linear_ideal} give  the following equalities:
\begin{align*}
\Trop(\langle  C_a y  - c \rangle) &= \pi\big(\Trop(\widehat{I}_{(a,1)}^\lin)\cap \{w\in \RR^{r+1} \mid w_{r+1}=0\}\big)   \\
\Trop^+(\langle C_a y - c\rangle) &= \pi\big(\Trop^+(\widehat{I}_{(a,1)}^\lin)\cap \{w\in \RR^{r+1} \mid  w_{r+1}=0\}\big)\, . \end{align*}
The bounds now follow from \Cref{thm:bound_purevertical} applied to $F$, first using that $\deg(\phi_M)=\deg(\phi_{\widehat{M}})$ and 
\[ 
\rowspan(\widehat{M})\cdot \Trop(\widehat{I}^\lin)  = 
\rowspan(M)\cdot \Trop(\langle Cy- c\rangle) \]
since the last column of $\widehat{M}$ is zero, 
 and then that $\rowspan(\widehat{M}) + (h,0)$ and  $\Trop(\widehat{I}^{\lin}_{(a^*,1)})$ intersect transversely.
\end{remark}

\section{Improved bounds for systems with special structure}
\label{sec:improvements}
\noindent In the previous section, we presented   bounds for 
the root counts $\grc(F)$ and $\mrc_{>0}(F)$ that are applicable for all augmented vertically parametrized systems $F$. In practice, systems arising from reaction networks (see \Cref{sec:reaction_networks}) often have additional structure that can be exploited to improve computations or provide sharper bounds for 
$\mrc_{>0}(F)$. In this section, we explore two such structures: \emph{matroidal cotransversality} and \emph{parametrically toric systems}.

\subsection{Cotransversality of the coefficient matroid}
\label{subsec:transversality}

In this subsection, we introduce a sufficient condition for when the tropical intersection number of an augmented vertically parametrized system in \Cref{thm:grc} agrees with a mixed volume.  This is an important case, as the mixed volume is much easier to compute in practice.  
The condition is naturally expressed using the notion of cotransversal matroids.

\begin{definition}
    A matroid $\mathcal{M}$ on $[n]$ is said to be \term{cotransversal} if
    $\mathcal{M}=\mathcal{M}^*[Q]$ for a
        symbolic matrix $Q\in\QQ(\alpha_{ij}:i\in[d],j\in[n])^{d\times n}$ for which the $(i,j)$-th entry is either $\alpha_{ij}$ or $0$. 
        In this case, we refer to $Q$ as a \term{cotransversal matrix presentation} for $\mathcal{M}$.
\end{definition}

The supports of the rows of a cotransversal presentation of a matroid $\mathcal{M}$ constitute a \emph{transversal presentation} of the dual of $\mathcal{M}$ in the sense of \cite[Section~1.6]{Oxley} (see also \cite{Ardila2007}). 

Note that if the matroid $\mathcal{M}^*[C]$ of a rank $d$ matrix $C$ is cotransversal, one can always find a cotransversal presentation $Q$ of the matroid with $d$ rows. Furthermore, to decide whether a given matrix $Q$ is a cotransversal presentation of $\mathcal{M}^*[C]$, it is enough to check the vanishing of the maximal minors; see \eqref{eq:matroids_agree}. If $C$ is block diagonal, one can find a cotransversal presentation with the same block diagonal structure.

If we replace $C$ and $[L\mid -b]$ by cotransversal presentations of their matroids, we obtain a system with fixed support and freely varying coefficients, and could therefore expect the generic root count to be a mixed volume (denoted MV). The following theorem formalizes this intuition.

\begin{theorem}[Bound for cotransversal systems]
\label{thm:cotransversal}
    Let $F=(C  x^M  ,Lx-b)\in \CC[a,b][x^\pm]^n$  be a square augmented vertically parametrized system  for a vertical matrix $C\in \CC[a]^{s\times r}$, $L\in \mathbb{C}^{d\times n}$ of full row rank,  $M\in \mathbb{Z}^{n\times r}$, and $n=s+d$. 
    Assume that the generic matroids of $C$ and of $[L \mid -b]$ are cotransversal with respective matrix presentations
    \[ P \in \QQ(\beta_{ij} \mid i\in [s],j\in[r])^{s\times r}, \qquad 
    [Q\ |\  q] \in \QQ(\gamma_{ij} \mid i\in [d],j\in[n+1])^{d\times (n+1)} \,   \]
    where $q$ is a column vector of size $d$. 
    Then
    \[\grc(F) = \MV(P x^M , Q x + q)\,.\]
\end{theorem}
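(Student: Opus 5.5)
The idea is to compare two augmented systems through \Cref{thm:grc}, which depends only on the generic matroid of the linear part. Concretely, I will treat $G=(Px^M,\,Qx+q)$, with the symbolic entries $\beta_{ij},\gamma_{ij}$ as parameters, as a second parametrized system.

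\textbf{Step 1: $G$ falls under \Cref{thm:grc}.} Each $\beta_{ij}$ appears in exactly one column of $P$. So $P$ is vertical, and up to specialization $G$ is again an augmented vertically parametrized system. The only wrinkle is that the constant term $q$ and the matrix $Q$ are parametric as well. I would handle this either by noting that the toric-reembedding argument of \Cref{thm:grc} (via \Cref{lem:technical}, \Cref{lem:nu} and \Cref{lem:generic_transversality}) only uses that $\Trop(I^{\lin})$ is generic and that the shift acts on the $y$-coordinates, or by passing to the parameter-separating form. In either case one gets
\[\grc(G)=\Trop(I^{\bin})\cdot \Trop(\langle Py,\,Qx+q\rangle).\]

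\textbf{Step 2: the two intersection numbers coincide.} The linear ideal $I^{\lin}$ of $F$ is defined by the block matrix \eqref{eq:block_matrix_for_linear_part}. Its matroid is the direct sum of $\mathcal{M}^*[C]$ on the $y$-coordinates and $\mathcal{M}^*[L\mid -b]$ on the $(x,1)$-coordinates. By hypothesis these equal $\mathcal{M}^*[P]$ and $\mathcal{M}^*[Q\mid q]$ respectively; the sign of the last column is irrelevant for the unsigned matroid. \Cref{thm:TropLinearSpace} then shows that the generic tropical linear spaces of $F$ and $G$ agree as weighted fans, with all multiplicities one. Since $I^{\bin}$ is the same for both systems, we conclude $\grc(F)=\grc(G)$.

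\textbf{Step 3: $\grc(G)$ is a mixed volume.} The system $G$ has fixed supports and coefficients given by independent indeterminates. The $i$-th polynomial of $Px^M$ has support $\{M_j : P_{ij}\neq 0\}$ and independent coefficients $\beta_{ij}$, and the same holds for $Qx+q$. If two columns of $M$ coincide, the corresponding coefficients merge into a sum of distinct indeterminates, which is still generic. Bernstein's theorem then says that for generic coefficients the number of zeros in $(\CC^*)^n$ equals $\MV(Px^M,Qx+q)$, and that all these zeros are nondegenerate. Hence $\grc(G)=\MV$.

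\textbf{Main obstacle.} The delicate point is Step 1. The entries of $P$ and $[Q\mid q]$ are independent indeterminates rather than linear forms with a fixed $L$, so one must check that the generic-matroid and genericity arguments behind \Cref{thm:grc} carry over. The fallback is to specialize: choose numeric values $(\beta^*,\gamma^*)$ lying both in the generic matroid locus and in Bernstein's generic locus. These are Zariski-open conditions, so their intersection is nonempty and dense. Then $\grc$ of the specialized $(P_{\beta}x^M, Q_\gamma x+q_\gamma)$, viewed as a vertical system with parameters rescaling the columns of $P$, is governed by the same tropical intersection number. That reduces everything to \Cref{thm:grc} applied with a fixed matrix $L$.
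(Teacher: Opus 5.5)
Your proposal is correct and follows the paper's proof essentially step by step. The chain is the same: apply \Cref{thm:grc} to $F$; use the direct-sum matroid to identify the generic tropical linear space with $\Trop(\langle Py,\,Qx+q\rangle)$; identify the resulting intersection number with $\grc(Px^M,Qx+q)$ by rescaling the columns of $P$ with auxiliary parameters $c$ (your ``fallback'' is exactly the paper's step (3)); and conclude with Bernstein's theorem. Your remark that repeated columns of $M$ merge coefficients into sums of independent indeterminates, which remain generic, is a detail the paper leaves implicit.
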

\begin{proof}
Recall the ideals $I^{\bin}$ and $I^{\lin}$ from \eqref{eq:monomial_linbin}. 
We have the series of equalities
\begin{equation*}
    \begin{array}{rclcl}
   \grc(F) 
      &\overset{(1)}{=}&  \Trop(I^{\bin}) \cdot \Trop(I^{\lin})  
      &\overset{(2)}{=}&   \Trop(I^{\bin})  \cdot \Trop( \langle 
      P  y , Q  x + q   \rangle )
      \\ 
          &\overset{(3)}{=}& \grc( P x^M , Q  x + q) 
      &\overset{(4)}{=}& \MV(P x^M , Q  x + q) \, ,
    \end{array}
  \end{equation*}
obtained by the following arguments: 
  \begin{enumerate}
  \item Follows from \Cref{thm:grc}. 
  \item Follows from hypothesis, as the generic matroids of $C$ and $[L\mid-b]$ agree with the matroids of $P_\beta$ and $[Q_\gamma\mid q_\gamma]$ for generic $(\beta,\gamma) \in \CC^{s\times r}\times\CC^{d\times (n+1)}$.
   
    \item Introduce new parameters $c=(c_1,\ldots,c_r)$. For each fixed $(\beta,\gamma)$, the system $(y-x^M, P_\beta \diag(c) y, Q_\gamma  x + q_\gamma)$ is the toric reembedding of the augmented vertically parametrized system  $\big( P_\beta(c\star x^M), Q_\gamma  x + q_\gamma  \big)$. 
    Hence, the equality follows by first applying \Cref{thm:grc} and then realizing that as $\beta$ takes generic values, 
    the value of $c\in (\CC^*)^r$ does not matter for the generic root count. 
\item Follows from Bernstein's theorem, as all nonzero coefficients are freely varying. \qedhere
\end{enumerate}  
\end{proof}

\begin{remark}
Unlike in   \Cref{thm:monomial_reembedding_bounds}, the ideas used in the proof of \Cref{thm:cotransversal} do not immediately allow us to derive lower bounds on $\mrc_{>0}(F)$. One of the main obstacles is that the proof relies on genericity assumptions that do not carry over to the real setting. 
One possible direction to investigate in future work is the connection to \emph{positively decorated} mixed cells, which by \cite[Theorem 3.2]{BihanDickensteinGiaroli2020} give a lower bound on the maximal number of positive zeros of a sparse system with freely varying coefficients with fixed signs. 
\end{remark}

\begin{example}\label{ex:cotransversal}
We revisit  system \eqref{eq:1-site_equations} with the parameter-separating presentation from \eqref{eq:matrices_vertical}. By \Cref{rem:vertical},  the computation of $\Trop(I^{\lin})$ does not depend on the parameter $a$. 
The generic matroids of $C$ and of $[L \mid -b]$ are cotransversal with respective cotransversal presentations
\[
 P = {\small\begin{bmatrix}
\beta_{1,1} & \beta_{1,2} & \beta_{1,3} & 0 & 0 & \beta_{1,6} \\
0& 0 &\beta_{2,3} & \beta_{2,4} & \beta_{2,5} & \beta_{2,6} \\
0 & 0 &\beta_{3,3} & 0 & 0 &\beta_{3,6} 
\end{bmatrix}}, \qquad 
[Q\ | \ q] = 
{\small\begin{bmatrix}
\gamma_{1,1} & \gamma_{1,2} & \gamma_{1,3} & \gamma_{1,4} & 0 & 0 & \gamma_{1,7} \\
\gamma_{2,1} & 0 & 0 & 0 & \gamma_{2,5} & 0 & \gamma_{2,7} \\
0 & \gamma_{3,2} & 0 & 0&0 &  \gamma_{3,6} & \gamma_{3,7}
\end{bmatrix}}\, . 
\]
By \Cref{thm:cotransversal}, the generic root count is given by the mixed volume of the following system (we specialize all the parameters to $1$ for readability)
\begin{multline*}
   \qquad  (
x_1x_3 + x_5  +  x_6,\ x_5 +  x_2x_4 +    x_6 ,\  x_5 +  x_6,\\
x_1 + x_2 +  x_3 + x_4  + 1, \ 
x_1  + x_5 + 1  , \ 
 x_2 +  x_6 + 1 
)\,, \qquad 
\end{multline*}
which equals three (in agreement with the result given in \Cref{ex:running_grc}).
\end{example}
 
\subsection{Parametrically toric vertical systems}
\label{subsec:parametric_toricity}

An important feature of many augmented vertical systems $F=(Cx^M, Lx-b)$ appearing in the biochemical literature is that the set of positive zeros $V_{>0}(C_ax^M)$ of the vertical part admits a monomial parametrization, in the sense that it is the positive part of a scaled toric variety \cite{FeliuHenriksson2024}. Here, we discuss how this property can be exploited to compute tropical bounds for the number of positive zeros that are faster to compute than those in \Cref{thm:monomial_reembedding_bounds}, and in some cases also sharper.

Consider the (positive) \emph{feasibility locus} of $Cx^M$, defined as
\[\mathcal{Z}_{>0}:=\{a\in \RR^m_{>0}\mid \text{$C_a  x^M=0$ for some $x\in\RR^n_{>0}$}\}\,.\]
We say that $F$ is
\term{parametrically toric} with respect to a given exponent matrix $A\in\ZZ^{d\times n}$ if there exists a function  $\psi\colon\mathcal{Z}_{>0}\to \RR^n_{>0}$
such that
\begin{equation}\label{eq:psi}
    V_{>0}(C_a x^M) =\psi(a)\star\mathcal{T}_A\qquad\text{for all $a\in\mathcal{Z}_{>0}$}\, ,
    \end{equation}
where we recall that $\mathcal{T}_A$ is the image of the monomial map $\phi_A(t)=t^A$ restricted to $\RR^d_{>0}$. 

\begin{example}
\label{ex:running_example_is_toric}
For our running example \eqref{eq:1-site_equations}, a straightforward computation shows that $\mathcal{Z}_{>0}=\RR^6_{>0}$ and that the zeros of the vertical part admit the parametrization
$$V_{>0}(C_ax^M)=\left\{\big(t_1,t_2,t_3,\tfrac{a_{1} a_{3}(a_{5}+a_{6})}{(a_{2}+a_{3})a_{4}a_{6}} t_1t_2^{-1}t_3, \tfrac{a_{1}}{a_{2}+a_{3}} t_1t_3, 
\tfrac{a_{3} a_{1}}{(a_{2}+a_{3})a_{6}} t_1t_3
\big)\mid t\in\RR^3_{>0}\right\}.$$
Hence, the system is parametrically toric with respect to the matrix
\[A=\begin{bmatrix}1 & 0 & 0 & 1 & 1 & 1\\
0 & 1 & 0 & -1 & 0 & 0\\
0 & 0 & 1 & 1 & 1 & 1\end{bmatrix}\qquad \text{using} \quad \psi(a)=\left(1,1,1,\tfrac{a_{1} a_{3}(a_{5}+a_{6})}{(a_{2}+a_{3}) a_{4}a_{6} }, \tfrac{a_{1}}{a_{2}+a_{3}}, 
\tfrac{a_{3} a_{1}}{(a_{2}+a_{3})a_{6}} \right)\,.\qedhere\]
\end{example}

\begin{remark}
\label{rmk:nonemptiness_of_Z}
The feasibility locus $\mathcal{Z}_{>0}$ is nonempty if and only if $\ker \overbar{C} \cap \RR^m_{>0}\neq \emptyset$ for the matrix $\overbar{C}$ in the parameter-separating presentation \eqref{eq:parameter_separating}. In this case, it holds that any $c\in\RR^n_{>0}$ is a zero of $C_{a^*}x^M$ by letting $a^*\coloneqq v \star c^{-\overbar{M}}$ with $v \in \ker(\overbar{C}) \cap \RR^m_{>0}$.
\end{remark}

Special cases of this type of toric structure have been studied in reaction network theory since the 1970s \cite{HornJackson1972}, and have more recently appeared under names such as \emph{toric dynamical systems} \cite{CraciunDickensteinShiuSturmfels}, \emph{disguised toric dynamical systems} \cite{BrustengaCraciunSorea}, and \emph{toric steady states} \cite{PerezMillanDickensteinShiuConradi}. 
In \cite{FeliuHenriksson2024}, parametrically toric vertical systems are studied in full generality through the lens of affine dependencies between the columns of $\overbar{M}$ and the matroid ~$\mathcal{M}^*[\overbar{C}]$. 
In particular, it is shown that $A$ can easily be computed via linear algebra in terms of $\overbar{C}$ and $\overbar{M}$.

The following is the main result of this subsection. We let $\conv(A,0)$ be the convex hull of the columns of $A$ and the origin in $\RR^d$, and $\vol(\cdot)$ denote the normalized volume of a polytope. As in \Cref{thm:bound_purevertical}, we view $\rowspan(A)$ as a single polyhedron of multiplicity~1.

\begin{theorem}[Toric root bounds]
\label{thm:toric_root_bound}\label{thm:toric_and_transversal}
Let $F=(C x^M, Lx-b)$ be a
square augmented vertically parametrized system for 
a vertical matrix $C\in \RR[a]^{s\times r}$, $L\in \mathbb{R}^{d\times n}$ of full row rank, and $M\in \mathbb{Z}^{n\times r}$. 
Suppose that the vertical part is parametrically toric with respect to a matrix $A\in\ZZ^{d\times n}$ of rank $d$. 
Let $J^{\lin} \coloneqq \langle\,L x- b\,\rangle \subseteq \CC(b)[x^\pm]$ and assume $\mathcal{Z}_{>0}\neq\emptyset$. 

\begin{enumerate}[label=(\roman*)]
\item  For any $b \in L(\RR^n_{>0})$ and $h \in \QQ^n$ such that $\rowspan(A) + h$ and  $\Trop(\langle Lx-b\rangle)$ intersect transversely, it holds that 
\[ \#\Big( \big(\rowspan(A) + h\big) \cap \Trop^+(\langle Lx-b\rangle) \Big) \leq \mrc_{>0}(F)\leq \rowspan(A)\cdot \Trop(J^{\lin})\,. \]
 \item If the generic matroid of $[\,L\mid -b\,]$
      is cotransversal with cotransversal presentation $[\,Q\ | \ q\,] \in \QQ(\gamma_{ij})^{d\times (n+1)}$, then  
\[\rowspan(A)\cdot \Trop(J^{\lin})= \MV(Q z^A +q)/\deg(\phi_A)\,.  \]
The right-hand side of this equality simplifies to $\vol(\conv(A,0))/\deg(\phi_A)$ when the generic matroid of $[\,L\mid -b\,]$ is uniform of rank $d$.   
\end{enumerate}
\end{theorem}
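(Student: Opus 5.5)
The plan is to reduce everything to the $d$-variate system obtained by substituting the monomial parametrization into the linear part, and then invoke \Cref{rem:alpha} (and \Cref{thm:cotransversal} for (ii)).

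\textbf{Reduction.} Fix $a\in\mathcal{Z}_{>0}$ and put $c=\psi(a)\in\RR^n_{>0}$. By \eqref{eq:psi}, the positive zeros of $F_{a,b}$ are the points $x=c\star z^A$ with $z\in\RR^d_{>0}$ and $L(c\star z^A)=b$. Since $A$ has rank $d$, the map $z\mapsto z^A$ is injective on $\RR^d_{>0}$. Hence these zeros are in bijection with the positive zeros of
\[
G_{c,b}(z)\coloneqq L\diag(c)\,z^A-b .
\]
This is a system of the form $C'z^{A}-b$, where $C'=L\diag(c)$ is vertical: each parameter $c_j$ sits only in column $j$. The constant term $b$ is fixed, so \Cref{rem:alpha} applies with $y=x$, and we can also append $b$ as an extra parametrized column with a zero exponent column.

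\textbf{Nondegeneracy transfer.} I must check that nondegenerate positive zeros of $F_{a,b}$ correspond to nondegenerate zeros of $G_{c,b}$, in both directions.
\begin{itemize}
\item Near a positive zero $x^*$ of $F_{a,b}$, the Jacobian of $C_ax^M$ has rank $s$. So $V_{>0}(C_ax^M)$ is a smooth $d$-manifold there, and it equals $c\star\mathcal{T}_A$, whose tangent space is $\diag(x^*)\rowspan(A^\top)$.
\item Nondegeneracy of $x^*$ for $F$ means $\ker L$ meets this tangent space trivially.
\item That condition is exactly invertibility of $L\diag(x^*)A^\top$, which is the Jacobian of $G_{c,b}$ in logarithmic coordinates.
\end{itemize}
This is the step I expect to be the main technical obstacle. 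The delicate direction is the converse: I must ensure the rank-$s$ condition on the vertical part holds at the zeros obtained in the lower-bound construction. If it fails, I would perturb $a$ within a generic neighbourhood, as in \Cref{thm:monomial_reembedding_bounds}(ii), to make the zeros of the vertical part nondegenerate.

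\textbf{Part (i).} For the upper bound, each nondegenerate positive zero of $F_{a,b}$ gives a nondegenerate positive zero $z$ of $G_{c,b}$, hence a nondegenerate positive zero $y=z^A$ of $\langle L\diag(c)y-b\rangle+I(\im\phi_A)$. \Cref{rem:alpha} bounds their number by $\rowspan(A)\cdot\Trop(\langle L\diag(c)y-b\rangle)$ for generic parameters. Two facts then give $\rowspan(A)\cdot\Trop(J^{\lin})$:
\begin{itemize}
\item the matroid of $[\,L\diag(c)\mid -b\,]$ equals that of $[\,L\mid-b\,]$ for $c\in(\CC^*)^n$;
\item stable intersection numbers are translation invariant, which handles $\val(c)$.
\end{itemize}
Since $b$ can be treated as a parameter column, the bound holds for all $b$ via the maximum characterization of $\grc$.

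For the lower bound, I use \Cref{rmk:nonemptiness_of_Z}: any $c\in\RR^n_{>0}$ lies on $V_{>0}(C_{a}x^M)$ for a suitable $a\in\mathcal{Z}_{>0}$. Hence $\psi(a)\star\mathcal{T}_A$ can be any scaled positive toric variety, in particular $t^{-h}\star\mathcal{T}_A$ for small real $t>0$. Tropically, by the argument of \Cref{lem:nu}, this amounts to translating $\rowspan(A)$ by $h$. I then apply \Cref{thm:RealTropLowerBounds} to the prime ideals $I(t^{-h}\star\im\phi_A)$ and $\langle Lx-b\rangle$ over $\RR\{t\}$; the transversality hypothesis gives multiplicity one on the affine side. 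This yields at least $\#\big((\rowspan(A)+h)\cap\Trop^+(\langle Lx-b\rangle)\big)$ nondegenerate positive zeros for small $t$. By the transfer step, these are nondegenerate positive zeros of $F_{a,b}$ for the corresponding $a$.

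\textbf{Part (ii).} Consider $G=(L\diag(c)z^A-b)$, viewed as a purely vertical system with exponent matrix $[\,A\mid 0\,]$ and coefficient matrix $[\,L\diag(c)\mid -b\,]$. \Cref{rem:alpha} gives
\[
\grc(G)=\deg(\phi_A)\,\big(\rowspan(A)\cdot\Trop(J^{\lin})\big).
\]
On the other hand, \Cref{thm:cotransversal} with $d=0$ and cotransversal presentation $[\,Q\mid q\,]$ of the generic matroid gives $\grc(G)=\MV(Qz^A+q)$. Dividing by $\deg(\phi_A)$ gives the claimed formula.

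In the uniform case, all entries of $[\,Q\mid q\,]$ are generic. The system then consists of $d$ generic polynomials with common support equal to the columns of $A$ together with $0$. By Kushnirenko's theorem their mixed volume is $\vol(\conv(A,0))$.
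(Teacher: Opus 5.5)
Your overall route is the paper's. You reduce to $G=L(c\star z^A)-b$ via injectivity of $\phi_A$ on $\RR^d_{>0}$, bound from above with \Cref{rem:alpha} and scaling-invariance of the matroid, realize any scaling $c$ by some $a^*$ through \Cref{rmk:nonemptiness_of_Z}, and get (ii) from cotransversality and Bernstein/Kushnirenko. Routing (ii) through \Cref{thm:cotransversal} with exponent matrix $[\,A\mid 0\,]$ is equivalent to the paper's chain. Your forward nondegeneracy check via invertibility of $L\diag(x^*)A^\top$ is an improvement, since the paper uses it silently in the upper bound.

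The lower bound, however, does not go through as written, because you replace the paper's appeal to \Cref{rem:alpha} by a direct use of \Cref{thm:RealTropLowerBounds} on $I(t^{-h}\star\im\phi_A)$ and $\langle Lx-b\rangle$. There are two problems.
\begin{enumerate}
\item The sign is wrong: $\Trop(I(t^{-h}\star\im\phi_A))=\rowspan(A)-h$. You would therefore be counting $(\rowspan(A)-h)\cap\Trop^+(\langle Lx-b\rangle)$, about which the transversality hypothesis says nothing. You need the scaling $t^{h}$.
\item \Cref{thm:RealTropLowerBounds} requires all zeros of $I+J$ in $\RR\{t\}^n_{>0}$ to be nondegenerate (and $V(I+J)\cap\RRt^n_{>0}$ to be finite). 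You never verify this for the exact scaling $t^h$. The paper sidesteps it because \Cref{rem:alpha} rests on \Cref{thm:monomial_reembedding_bounds}(ii). That proof takes coefficients $\lambda\star t^h$ with $\lambda$ in the Euclidean open set where the oriented matroid, hence $\Trop^+$, is unchanged, and such that the parameters land in the Zariski open locus $\mathcal{V}$ where all zeros are nondegenerate.
\end{enumerate}
The cleanest repair is to cite \Cref{rem:alpha} for $\mrc_{>0}(G)\geq\#\big((\rowspan(A)+h)\cap\Trop^+(\langle Lx-b\rangle)\big)$, exactly as the paper does.

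For the converse transfer, you rightly see that the vertical Jacobian must have rank $s$ at the zeros you produce, but ``perturb $a$'' does not deliver this. At a point where the vertical part is degenerate, which is precisely the case you want to repair, nothing in the hypotheses controls how the coset $\psi(a)\star\mathcal{T}_A$ moves with $a$. So you cannot conclude that $G_{\psi(a),b}$ keeps enough nondegenerate positive zeros.

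Perturb $c$ instead, as follows.
\begin{itemize}
\item The nondegenerate positive zeros of $G_{c,b}$ persist for $c$ in an open set $U$.
\item The set $\{v\star c^{-\overbar M}\mid v\in\ker\overbar C\cap\RR^m_{>0},\ c\in U\}\subseteq\mathcal{Z}_{>0}$ has dimension $m$. It is the projection of $\{(a,x)\in\RR^m_{>0}\times U\mid C_ax^M=0\}\cong(\ker\overbar C\cap\RR^m_{>0})\times U$, which has dimension $m-s+n$, and whose fibres lie in the $d$-dimensional cosets $\psi(a)\star\mathcal{T}_A$.
\item By Sard, this set contains a regular value $a$ of the projection $\{(a,x)\in\RR^m_{>0}\times\RR^n_{>0}\mid C_ax^M=0\}\to\RR^m$.
\item Since $\partial_a\big(\overbar C(a\star x^{\overbar M})\big)=\overbar C\diag(x^{\overbar M})$ has rank $s$, regularity means every positive zero of $C_ax^M$ has $x$-Jacobian of rank $s$.
\item Moreover $V_{>0}(C_ax^M)=c\star\mathcal{T}_A$ for some $c\in U$, so your transfer applies.
\end{itemize}
The paper's proof identifies $V_{>0}(G_c)$ with $V_{>0}(F_{a^*,b})$ only as sets and never addresses nondegeneracy there. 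This particular gap is therefore shared with the paper rather than introduced by you.
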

\begin{proof}
By hypothesis, $x\in V_{>0}(C_a x^M)$ if and only if $x=\psi(a) \star z^A$ for some $z\in \RR^d_{>0}$. 
Therefore, as  $\phi_A$ is injective on $\RR^d_{>0}$, there is a one-to-one correspondence between $V_{>0}(F_{a,b})$ and $V_{>0}(L( \psi(a) \star z^A) - b)$. 

Fix $b$ and consider the  parametrized system
$G=L( c \star z^A) - b$ with $c=(c_1,\dots,c_{n})$. As in \Cref{rem:vertical}, the tropicalization of $\langle L( c \star x) - b \rangle $   is simply $\Trop(\langle L  x - b\rangle)$. 
Using \Cref{rem:alpha}, 
\begin{align*}
    \mrc_{>0}(G) 
    \leq  \rowspan(A)\cdot \Trop(\langle Lx-b\rangle)\, .
\end{align*}
The right-hand side is maximized when $b$ is such that $\Trop(\langle Lx-b\rangle)$ takes its generic value. Hence the second inequality in part (i) holds.  
 
To show the first inequality in part (i), we use that 
\Cref{rem:alpha}  gives that
\begin{align}\label{eq:ineq}
\mrc_{>0}(G)   \geq \# \big((\rowspan(A)+h)\cap \Trop^+(\langle\,Lx-b\,\rangle)\big)\,.
\end{align}
Pick $c\in \RR^n_{>0}$ such that $\# V_{>0}(G_{c})$ is at least equal to the right-hand side of \eqref{eq:ineq}. Using now \Cref{rmk:nonemptiness_of_Z}, there exists  $a^*\in \RR^m_{>0}$ such that 
\[ c \in V_{>0}(C_{a^*}\, x^M) =\psi(a^*)\star\mathcal{T}_A \, \quad \Rightarrow \quad c\star\mathcal{T}_A =  \psi(a^*)\star\mathcal{T}_A\,. \]
Hence 
$V_{>0}(G_{c})    = V_{>0}(L(\psi(a^*) \star z^A)-b)  = V_{>0} (F_{a^*,b})$ and the claim follows using \eqref{eq:ineq}.

Part (ii)  is obtained by the following equalities, similar to the proof of \Cref{thm:cotransversal}:
\begin{align*}
& \rowspan(A)\cdot \Trop(J^{\lin})\,    && 
\\ 
&\quad = \rowspan(A)\cdot  \Trop(\langle\, Q  x + q \,\rangle)  && \text{(by cotransversality)}  \\
&\quad = \grc(Q_\gamma (\lambda \star z^A) + q_\gamma   )  / \deg(\phi_A) && \text{(with free parameters $\lambda,\gamma$, by \Cref{rem:alpha})} \\ 
&\quad = \grc(Q  z^A + q   )  / \deg(\phi_A)   && \text{(trivial)}
\\ &\quad = \MV(Q z^A + q)/\deg(\phi_A) && \text{(by Bernstein's theorem)}\, . 
\end{align*}
If  $\mathcal{M}^*[\,L\mid -b\,]$ is uniform, then it admits a cotransversal presentation $[\,Q\ | \ q\,]$ having all entries nonzero. 
Hence,  $\MV(Qz^A+q)$ is simply $\vol(\conv(A,0))$ by Kushnirenko's theorem. 
\end{proof}
 
Note that it is not necessary to have an explicit description of the map $\psi$ to apply the bounds of \Cref{thm:toric_root_bound}.
We refer to the upper bound in \Cref{thm:toric_root_bound}(i) as the \term{upper toric root bound} of $F$. This bound offers two advantages compared to the bound in \Cref{thm:monomial_reembedding_bounds}:
\begin{enumerate}
    \item The stable intersection takes place in $\RR^n$, rather than $\RR^{r+n}$, which can be expected to speed up the calculations.
    \item\label{it:sharper_bound} If $V_{\CC^*}(C_a  x^M)$ is generically reducible, we get a sharper upper bound of $\mrc_{>0}(F)$, as the upper toric root bound only takes into account the component intersecting $\RR^n_{>0}$.
\end{enumerate}

\begin{example}
Consider the augmented vertically parametrized system 
\[  F=((a_1-a_2)x_1^3x_2^2 + a_3x_2^4- 2a_4x_1^6,\  2x_1+3x_2-b)\,  \]
whose vertical part is parametrically toric with respect to the exponent matrix $A=[\,2\ \ 3\,]$; see \cite[Eq~(2.4)]{FeliuHenriksson2024}. 
A simple calculation reveals that $\grc(F)=6$
and that the upper toric root bound from \Cref{thm:toric_root_bound}(i) is 
\[\mrc_{>0}(F)\leq \rowspan([\,2\ \ 3\,])\cdot \Trop(\langle\, 2x_1+3x_2-b \,\rangle)=3\, ,\]
where $b$ is chosen to be generic. This improvement over the bound $\mrc_{>0}(F)\leq  \grc(F)$ is explained by the fact that the vertical part $V(C  x^M)$ generically consists of two irreducible components, each of which generically intersects $V(Lx-b)$ three times.

    For $h=(1,0)$ and $b = 1$, the intersection
    \[ \big(\rowspan([\,2\:\:3\,]) + (1,0)\,\big) \cap \Trop^+(\langle 2x_1+3x_2-1\rangle) \]
    is transverse and contains one point. Thus $\mrc_{>0}(F) \geq 1$ by \Cref{thm:toric_root_bound}(i). By the results of \cite[Section~3]{MullerFeliuRegensburgerConradiShiuDickenstein2015} (see also \cite[Proposition~7.1]{FeliuHenriksson2024}), the true value of $\mrc_{>0}(F)$ is $1$.
\end{example}

\begin{example}
\label{ex:toric_bounds_for_running_example}
Our running example \eqref{eq:1-site_equations}  is parametrically toric with respect to the exponent matrix $A$ found in \Cref{ex:running_example_is_toric}, which has $\deg(\phi_A)=1$. The generic matroid of $[\,L\mid -b\,]$
is cotransversal with presentation $[\,Q\mid q\,]$ given in \Cref{ex:cotransversal}. By \Cref{thm:toric_and_transversal}(ii), an upper bound of the maximal positive root count is  the mixed volume of $Q z^A + q$, which is three. 
\end{example}

\section{Applications in reaction network theory}
\label{sec:reaction_networks}
A reaction network on a set of \emph{species} $\{X_1,\dots,X_n\}$ is a collection of labeled \emph{reactions}
\[\alpha_{1j}X_1+\dots+\alpha_{nj}X_n \ce{->[$a_j$]} \beta_{1j} X_1+\dots+\beta_{nj} X_n, \qquad j=1,\dots,m\]
for $\alpha_{ij},\beta_{ij}\in\ZZ_{\geq 0}$ and $a_j\in \RR_{>0}$. 
Under the assumption of \emph{power-law kinetics}, the concentrations of the species at time $t$, denoted $x(t)=(x_1(t),\dots,x_n(t))$, can be modeled by a system of autonomous ordinary differential equations (ODEs) of the form
\[ \frac{d x}{dt} = N (a \star x^M), \qquad  x \in \RR^n_{>0} \, ,\]
where:
\begin{itemize}
    \item $N = (\beta_{ij}-\alpha_{ij}) \in \ZZ^{n\times m}$ is the \emph{stoichiometric matrix}, which encodes the net production of each species in each reaction,
    \item $a=(a_1,\dots,a_m) \in \RR^m_{>0}$ is the vector of \emph{rate constants},
    \item $M\in \ZZ^{n\times m}$ is the \emph{kinetic matrix} of the model. A common modeling assumption is to take $M=(\alpha_{ij})$, in which case the system is said to have \emph{mass-action kinetics}. 
\end{itemize}

The \emph{steady states} of the ODE system  are the zeros in $\RR^n_{>0}$ of the (Laurent) polynomial system $N (a \star x^M)$. If $s=\rank(N)<n$, then 
we pick a matrix $C\in \RR^{s\times m}$ whose rows are linearly independent vectors in $\rowspan(N)$ and form the system
\[C (a \star x^M)  \in \RR[x^\pm]^s  \]
whose positive zeros are the steady states. 
None of the results below depend on the choice of $C$. In this case the ODE system is not full dimensional and trajectories are confined to proper affine linear subspaces, which are parallel translates of $\im(N)$. The intersections of these sets with the nonnegative orthant are called \emph{stoichiometric classes}. 
By letting $d=n-s$ and $L\in \RR^{d \times n}$ of full rank such that $L N =0$, the stoichiometric classes are given by equations
\[ L x - b =0 \, , \]
with $b=(b_1,\dots,b_d)\in \RR^{d}$ called the vector of \emph{total amounts}.

By all the above, the  steady states of the network within a stoichiometric class are the positive zeros of the 
 \term{(parametric) steady state system}: 
 \begin{align}
    \label{eq:SteadyStateSystem}
  F=\left(C (a \star x^M),\: L x - b\right)\, \in \RR[a,b][x^\pm]^n\,,
    \end{align}
    which is an augmented vertically parametrized system, given naturally with the parameter-separating presentation.

For a reaction network with steady state system $F$, we will refer to $\grc(F)$ as the \term{steady state degree} (in the torus) of the network. We note that $\mrc_{>0}(F)>1$ if and only if the network has the capacity for (nondegenerate) multistationarity, that is,  \eqref{eq:SteadyStateSystem} has at least two nondegenerate positive zeros for some $(a,b)\in\RR^m_{>0}\times\RR^d$. Steady state degrees have previously been computed using Gröbner bases \cite{Gross2016wnt} and bounded above by mixed volume computations \cite{GrossHill2021}.
 
The running example presented in \Cref{ex:running_matrices} arises from the following phosphorylation and dephosphorylation network, which is ubiquitous in the literature:
\begin{equation}
\label{eq:1-site_network}
S_0 + K \ce{<=>[$a_1$][$a_2$]} S_0K \ce{->[$a_3$]} S_1 + K  \qquad 
    \ce{$S_1$ + $P$ <=>[$a_4$][$a_5$] $S_1P$ ->[$a_6$] $S_0$ + $P$}\,.
\end{equation}
The chemical meaning of the species is as follows: $S_0$ is a biochemical substrate, $S_1$ is a phosphorylated form of the substrate, $K$ is an enzyme (kinase) that catalyzes the phosphorylation of $S_0$, $P$ is another enzyme (phosphatase) that catalyzes the dephosphorylation of $S_1$, and $S_0K$ and $S_1P$ are intermediate protein complexes. If $x=(x_1,\ldots,x_6)$ denotes the concentration of the species ordered as $(K,P,S_0,S_1,S_0K,S_1P)$, the steady state system is the system $F$ in \eqref{eq:1-site_equations}: the first three entries of $F$ correspond to $S_1$, $S_0K$ and $S_1P$, respectively, and the last three entries define the stoichiometric classes. 

We saw in \Cref{ex:cotransversal} that the generic root count could be found by a simple mixed volume computation. The same holds for the general class of \emph{multisite phosphorylation networks}. Specifically, for any positive integer $k$, the \emph{$k$-site phosphorylation network} is given by
\begin{align}\label{eq:ksite}
\begin{split}
S_{i} + K \:\xrightleftharpoons[a_{6i+2}]{a_{6i+1}}\: & S_{i}K \xrightarrow{a_{6i+3}} S_{i+1} + K  \\
S_{i+1} + P \:\xrightleftharpoons[a_{6i+5}]{a_{6i+4}}\: & S_{i+1}P \xrightarrow{a_{6i+6}} S_{i} + P 
\end{split}
\qquad \quad i =0,\dots,k-1\,,
\end{align}
where $k$ is the number of phosphorylation sites of the network. It was shown in \cite{WangSontag} that the steady state degree is $2k+1$ (as $F$ generically has  no zeros in the coordinate hyperplanes), that  the number of  steady states is bounded from above by $2k-1$, and that at least $R\coloneqq 2\lfloor \tfrac{k}{2} \rfloor + 1$ of them are obtainable for some choice of parameters. 
The number $R$ is achieved in the open parameter regions discussed in \cite{GiaroliRischterPerezDickenstein}, and   in \cite{rendall:unlimited} it is shown that $R$ nondegenerate steady states are obtained, with $\lfloor \tfrac{k}{2} \rfloor + 1$ of them being asymptotically stable and the rest unstable. 
The bound $2k-1$ is achieved for $k\in\{2,3,4\}$; see \cite{FlockerziHolsteinConradi}. A proof that  the  bound $2k - 1$  is achievable for all $k$ has recently been announced by Dickenstein and Fattori \cite{Fattori}. 

The steady state systems of the $k$-site phosphorylation networks are parametrically toric \cite{PerezMillanDickensteinShiuConradi}, so \Cref{thm:toric_root_bound}(i) applies. Additionally, we show now that they give rise to cotransversal matroids, so   \Cref{thm:cotransversal} and \Cref{thm:toric_and_transversal}(ii) are also applicable.

\begin{theorem}\label{thm:cotransversal_ksite}
Consider the parametric steady state system $F=(C (a \star x^M),\: L x - b)$ in \eqref{eq:SteadyStateSystem} of the 
$k$-site phosphorylation network for $k\geq 1$. The matroids of $C$ and $[ L \mid -b]$ for generic $b$ are cotransversal.
\end{theorem}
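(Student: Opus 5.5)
The plan is to write down explicit cotransversal presentations for both matrices, generalizing \Cref{ex:cotransversal}, and certify them via \eqref{eq:matroids_agree}: a symbolic matrix $Q$ with the same zero pattern is a cotransversal presentation of $\mathcal{M}^*[C]$ exactly when the maximal minors of $C$ (for generic $a$) and of $Q$ vanish on the same index sets. The minors of a generic-entry matrix with a given zero pattern are nonzero exactly when the bipartite graph (rows versus columns, edges at nonzero entries) restricted to those columns has a perfect matching. So for each matrix I must exhibit a zero pattern and show: a maximal minor of $C$ is nonzero iff the corresponding columns of the pattern admit a perfect matching.

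For $[\,L\mid -b\,]$: the conservation laws of the $k$-site network are total kinase $K+\sum_i S_iK$, total phosphatase $P+\sum_i S_{i+1}P$, and total substrate $\sum_i S_i+\sum_i S_iK+\sum_i S_{i+1}P$. I would choose $L$ with these rows and then row-reduce (as in \Cref{ex:cotransversal}, where the substrate row had the complexes removed) so that the substrate row is supported on $S_0,\dots,S_k$, $K$, $P$ and the last column; i.e. subtract the kinase and phosphatase rows. The resulting $[\,L\mid -b\,]$ has three rows with pattern: row 1 on $K$, $S_iK$, $b$; row 2 on $P$, $S_{i+1}P$, $b$; row 3 on all $S_i$, $K$, $P$, $b$. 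Nonzero entries in row 3 outside $\{K,P,b\}$ are all $1$ on the $S_i$, and the complex columns are unit-like in rows 1,2. I would check minors directly: a $3\times 3$ minor vanishes iff two chosen columns are proportional or the columns lie in a 2-dimensional subspace, and enumerate the few column types ($K$, $P$, $S_i$, $S_iK$, $S_{i+1}P$, $b$). Columns of the same type among $S_i$ (or among $S_iK$, or among $S_{i+1}P$) are equal, which matches the pattern since identical supports of size one force a vanishing matching. The case with generic $b$ and $K,P$ mixing needs a short check; this is routine.

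For $C$: take $C$ to be the rows of $N$ for the complexes $S_iK$, $S_{i+1}P$ together with rows for $S_1,\dots,S_k$ (rank $s=3k$ after checking), and in the parameter-separating presentation each column corresponds to one reaction. Row-reduce to a convenient basis of $\rowspan(N)$ (e.g. replace $S_j$ rows by cumulative sums, producing the structure of $P$ in \Cref{ex:cotransversal}, where one row $\beta_{3,\cdot}$ is supported only on the two catalytic reactions). The key combinatorial claim is a block structure: reactions come in $2k$ groups (binding/unbinding/catalysis for each enzyme step) and a set of $3k$ columns is a basis of the column matroid iff it contains, per step $i$, the appropriate choices so that a perfect matching exists. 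I expect to prove this by induction on $k$, noting that the matrix is block lower-triangular with $k$ blocks coupled only through one shared row per site.

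The main obstacle is the $C$ part: identifying the right row basis so that the zero pattern exactly matches the matroid (for a bad basis, minors of $C$ vanish by accidental cancellation, e.g. $a_2$ and $a_3$ columns are parallel in the intermediate row). I would first compute small cases $k=1,2$ by hand/computer to guess the pattern, then argue the matroid is a direct-sum-like gluing of the $k=1$ matroid along parallel classes, and that such gluings of cotransversal presentations remain cotransversal.
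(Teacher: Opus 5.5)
Your treatment of $[\,L\mid -b\,]$ matches the paper's. After subtracting the kinase and phosphatase rows from the substrate row, the columns of $S_0,\dots,S_k$, of the $S_jK$ and of the $S_jP$ form parallel classes. This holds both in this matrix and in the generic matrix with the same zero pattern, so everything reduces to a single $3\times 6$ comparison. That comparison works: in both matrices the only dependent triples of class representatives are $\{K,S,SK\}$ and $\{P,S,SP\}$.

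The gap is in the part about $C$, which is the substantive half of the theorem. You never exhibit a row basis or a presentation. Instead you plan an induction on $k$ for a block lower-triangular matrix whose blocks are coupled through shared $S_i$-rows, closed off by the claim that ``gluings of cotransversal presentations along parallel classes remain cotransversal.'' This does not go through as stated, for two reasons:
\begin{itemize}
\item For a block-triangular matrix the maximal minors are not determined blockwise, so the induction would need explicit control of the coupling.
\item The gluing claim is not a standard fact. Cotransversal matroids are not closed under the usual gluing operations; already a series extension of a cotransversal matroid need not be cotransversal.
\end{itemize}
Your suggested reduction by cumulative sums of the $S_j$-rows also does not decouple anything. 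On the block $C_i$ ($i\ge 1$) of reactions $6i+1,\dots,6i+6$, the $S_i$- and $S_{i+1}$-rows sum to $(-1,1,1,-1,1,1)\neq 0$, so every block still meets the earlier substrate rows.

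The missing idea is that the coupling can be removed entirely, because substrate is conserved within each site: on $C_i$, the rows of $S_i,S_{i+1},S_iK,S_{i+1}P$ sum to zero. Concretely:
\begin{itemize}
\item For each $i\ge 1$, add the $S_iK$-row to the $S_i$-row and the $S_{i+1}P$-row to the $S_{i+1}$-row. On $C_i$ these become $(0,0,-1,0,0,1)$ and $(0,0,1,0,0,-1)$, and $C_0$ is unchanged.
\item Then replace each $S_i$-row by the sum of the $S_i,\dots,S_k$-rows.
\end{itemize}
The result has the same row span as $C$ and is block diagonal. The block $C_0$ lives on the rows $S_1,S_0K,S_1P$, and the new block $\widetilde{C}_i$ lives on the rows $S_{i+1},S_iK,S_{i+1}P$.

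Hence $\mathcal{M}^*[C]$ is the direct sum of $\mathcal{M}^*[C_0],\mathcal{M}^*[\widetilde{C}_1],\dots,\mathcal{M}^*[\widetilde{C}_{k-1}]$. In fact each summand is, after relabeling, the $k=1$ matroid, so your intuition about copies of it is right; they are simply not glued. The block-diagonal matrix $[P_0\mid P_1\mid\dots\mid P_{k-1}]$ is then a cotransversal presentation. Here $P_0$ comes from \Cref{ex:cotransversal}, and $P_i$ is the generic matrix with the zero pattern of $\widetilde{C}_i$ (row supports $\{3,6\}$, $\{1,2,3\}$, $\{4,5,6\}$), certified by one $3\times 6$ check.
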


\begin{proof}
    By ordering the species as $(K,P,S_0,\dots,S_k,S_0K,\dots,S_{k-1}K,S_1P,\dots,S_kP)$,
a matrix $L$ describing the stoichiometric classes is 
\[ [L \mid -b]  ={\small \left[ \begin{array}{ccccc|ccc|ccc|c}
0 & 0 & 1 & \dots & 1 & 1 & \dots & 1 & 1 & \dots & 1 & -b_1\\
1 & 0 & 0 & \dots & 0 & 1 & \dots & 1 & 0 & \dots & 0 & -b_2\\
0 & 1 & 0 & \dots & 0 & 0 & \dots & 0 & 1 & \dots & 1 & -b_3
\end{array}\right]} \]
(see e.g. \cite[Equation (4.1)]{PerezMillanDickensteinShiuConradi}). Each row corresponds to the total amounts of $S$, $K$ and $P$,  respectively. The first block is for the species $K,P,S_0,\dots,S_k$, the second for $S_0K,\dots,S_{k-1}K$ and the last for $S_1P,\dots,S_kP$. By subtracting the second and third rows from the first, we obtain the following matrix with the same matroid as $[L \mid -b]$:
\[ L'\coloneqq {\small \left[ \begin{array}{ccccc|ccc|ccc|c}
-1 & -1 & 1 & \dots & 1 & 0 & \dots & 0 & 0 & \dots & 0  & -b_1+b_2+b_3 \\
1 & 0 & 0 & \dots & 0 & 1 & \dots & 1 & 0 & \dots & 0 & -b_2 \\
0 & 1 & 0 & \dots & 0 & 0 & \dots & 0 & 1 & \dots & 1 & -b_3
\end{array}\right]}\, . \]
The  generic matroid of $L'$ coincides with that of the following matrix $[Q \mid q]\in \CC[\gamma_{ij}]^{3\times (3k+4)}$:
\[  {\small \left[ \begin{array}{ccccc|ccc|ccc|c}
\gamma_{1,1} & \gamma_{1,2}  & \gamma_{1,3}  & \dots & \gamma_{1,k+3}  & 0 & \dots & 0 & 0 & \dots & 0 & \gamma_{1,3k+4}\\
\gamma_{2,1} & 0 & 0 & \dots & 0 & \gamma_{2,k+4} & \dots & \gamma_{2,2k+3} & 0 & \dots & 0 & \gamma_{2,3k+4}\\
0 & \gamma_{3,2} & 0 & \dots & 0 & 0 & \dots & 0 & \gamma_{3,2k+4} & \dots & \gamma_{3,3k+3} & \gamma_{3,3k+4}
\end{array}\right]}\, .  \]
Indeed, using  the structure of $[Q \mid q]$, it suffices to show that the   matrices
\[ {\small \left[ \begin{array}{ccccc|c}
\gamma_{1,1} & \gamma_{1,2}  & \gamma_{1,3}  &  0 & 0 & \gamma_{1,3k+4}\\
\gamma_{2,1} & 0 & 0 & \gamma_{2,k+4}   & 0  & \gamma_{2,3k+4}\\
0 & \gamma_{3,2} & 0   & 0  &  \gamma_{3,2k+4}   & \gamma_{3,3k+4}
\end{array}\right]\, \quad\text{and}\quad \left[ \begin{array}{ccccc|c}
-1 & -1  & 1  &  0 & 0 & \gamma_{1,3k+4}\\
1 & 0 & 0 & 1  & 0  & \gamma_{2,3k+4}\\
0 & 1 & 0   & 0  & 1  & \gamma_{3,3k+4}
\end{array}\right]  } \]
have the same generic matroid. This can be checked by a direct computation using, for example, \Cref{lem:genericMatroid}. Therefore, the generic matroid of $[L\mid -b]$ is cotransversal with cotransversal presentation  $[Q \mid q]$.

We now construct $C$. By the form of $L$, and since the stoichiometric matrix $N$ has rank $3k$ and $3k+3$ rows, each corresponding to a species, $C$ can be chosen to consist of the last $3k$ rows of $N$. Each column of $C$ corresponds to a reaction, and using the  numbering of the reactions in \eqref{eq:ksite}, we   write $C=[C_0\mid \dots \mid C_{k-1}]$ where block $C_i$ consists of the reactions $6i+1,\dots,6i+6$. 

Consider the following matrices:
\[   {\footnotesize W_1\coloneqq\begin{bmatrix}
    -1 & 1 & 0 & 0 & 0 & 1 \\
    0 & 0 & 1 & -1 & 1 & 0 \\
    1& -1 & -1 & 0 & 0 & 0 \\
     0 & 0 & 0 & 1& -1 & -1 
\end{bmatrix}\,, \qquad W_2\coloneqq\begin{bmatrix}
     0 & 0 & -1 & 0 & 0 & 1 \\
    0 & 0 & 1 & 0 & 0 & -1\\
    1& -1 & -1 & 0 & 0 & 0 \\
     0 & 0 & 0 & 1& -1 & -1 
\end{bmatrix}\,. \quad 
}\]
For $i>0$, the block $C_i$ has $4$ nonzero rows, corresponding to the species $S_i,S_{i+1},S_iK,S_{i+1}P$. These rows form the submatrix $W_1$. 
In particular, the rows of $S_iK,S_{i+1}P$ are zero outside $C_i$, 
 the row of $S_i$ has nonzero entries only in  $C_i,C_{i-1}$ if $i<k$, and  in   $C_{k-1}$ if $i=k$. 
We now modify $C$   to obtain a block matrix with $k$ blocks and the same row span. First, we add, for all $i>0$, the row  of $S_iK$ to that of $S_i$, and the row of $S_{i+1}P$ to that of   $S_{i+1}$, so these four rows in $C_i$ become $W_2$. 
Next, we add to the row of $S_i$ ($i>0$), the sum of the rows of $S_{i+1},\dots,S_k$, and obtain a new matrix $\widetilde{C}=[C_0 \mid \widetilde{C}_1\mid \dots \mid \widetilde{C}_{k-1}]$ with the same row span as $C$. The block 
$C_0$ is exactly the coefficient matrix of \eqref{eq:1-site_equations}, with additional $3k-3$ zero rows below. The block $\widetilde{C}_i$ for $i>0$ is only nonzero in the rows of $S_{i+1}, S_iK, S_{i+1}P$, which agree with the bottom 3 rows of $W_2$. 
So $\widetilde{C}$ is a block matrix. 
As shown in \Cref{ex:cotransversal}, the matroid of $C_0$ is cotransversal. Let $P_0$ denote the cotransversal presentation.
By a direct computation, the matroid of $\widetilde{C}_i$ for $i>0$ is cotransversal with cotransversal presentation
\[ P_i \coloneqq {\small\begin{bmatrix}
        0 & 0 & w_{i,1} & 0 & 0 & w_{i,2}\\
    w_{i,3} & w_{i,4} & w_{i,5} & 0 & 0 & 0 \\
     0 & 0 & 0 &  w_{i,6} & w_{i,7} & w_{i,8} 
\end{bmatrix}}.\]
We conclude that $C$ admits the cotransversal presentation $P:=[P_0\mid P_1 \mid \dots \mid P_{k-1}]$.
\end{proof}

It follows from \Cref{thm:cotransversal_ksite,thm:cotransversal} that the steady state degree of the $k$-site phosphorylation network can be obtained via a mixed volume computation.
The proof of \Cref{thm:cotransversal_ksite} also shows how to construct the cotransversal presentations appearing in \Cref{thm:cotransversal}.
By \cite{WangSontag}, this mixed volume must equal $2k+1$. 

\section{A Julia implementation}
\label{sec:implementation}
\noindent We close the paper by presenting our proof-of-concept implementation of the complex and positive root bounds derived in the previous sections. Our Julia package \texttt{VerticalRootCounts.jl} is accessible at
\begin{center}
\url{https://github.com/oskarhenriksson/VerticalRootCounts.jl}\,.
\end{center}
The package is built on top of the computer algebra system \texttt{OSCAR} \cite{OSCAR-book}, with an interface to the chemical reaction network theory package \texttt{Catalyst.jl} \cite{Catalyst}.

By default, the package uses three strategies for generic root count computations. It first checks if the rank condition \eqref{eq:rank_condition} holds, in which case it returns zero. It then checks if the linear part admits a cotransversal presentation, and if so, computes the generic root count as a mixed volume via \Cref{thm:cotransversal}. Otherwise, it computes the stable intersection of the tropical linear space and the tropicalized binomial variety as in \Cref{alg:bounds}. 
All auxiliary data (e.g., the certifiably generic choice of parameters, cotransversal presentations, and perturbations in stable intersections) are returned along with the root counts. 
For positive lower root bounds, the package makes a user-specified number of specializations of the parameters $(a,b)$ and perturbations of the tropical varieties, and returns the optimal bound together with these choices. The package also supports parametrically toric systems (with a known exponent matrix $A$) and computes the upper and lower root bounds from \Cref{thm:toric_root_bound}.

In the non-cotransversal case, the two main bottlenecks are the computation of the tropical linear space and the stable intersection. For the former, we rely on the algorithm for computing Bergman fans of realizable matroids from \cite{Rincon13}, via the implementation \cite{HampeJoswig17} in \textsc{polymake} \cite{polymake}. The output of this algorithm is a simplicial polyhedral fan that is a refinement of the polyhedral complex described in \Cref{thm:TropLinearSpace} and which, in practice, is substantially faster to compute \cite{HampeJoswigSchroeter:2019}. 

For the cotransversal case, the main bottlenecks are instead the verification of cotransversality and the computation of the mixed volume. 
Computing (maximal) transversal presentations  of matroids goes back to Brualdi and Dinolt \cite{BrualdiDinolt:1972}. We check cotransversality by computing the $\beta$-invariant of the cyclic flats of the matroid as described in \cite{BKM11}, and again we rely on the implementation in \textsc{polymake}.
Note that this computation is costly and currently does not exploit the fact that our matroids are associated with a matrix, which leaves room for improvement.
For computing mixed volumes, we use the Julia package \texttt{MixedSubdivisions.jl}
\cite{mixedsubdivisions}, which uses the tropical homotopy continuation algorithm of \cite{Jensen16}.

We end the manuscript by reporting on the performance of our package for the multisite phosphorylation family \eqref{eq:ksite}. As shown in \Cref{tab:counts_multisite}, our package correctly computes the steady state degree $2k+1$ for $k\leq 9$. For the positive root bounds, it is expected by \cite[Proposition~6.8]{RoseTelek24} that we should recover the lower bound $2\lfloor\frac{k}{2}\rfloor+1$. That we do not reach this bound in our experiments, and in particular do not attain the   upper bound $2k-1$, is explained by \Cref{rem:unbounded_regions}. 
We display the running times\footnote{The computations were performed on a MacBook Pro with an M4 chip and 18 GB RAM. Each entry is an average of three runs.} for the complex generic root count, using different computational strategies in  \Cref{tab:times_multisite}, which highlights the computational gains of taking into account additional matroidal and toric structure that an augmented system might possess, while also showing the tradeoff with the computational cost of detecting this structure.

\begin{table}[t]
\caption{Computed root counts for $k$-site phosphorylation networks.
}
\label{tab:counts_multisite}
\centering
\footnotesize
\begin{tabular}{lccccccccc}
\toprule
   Number of phosphorylation sites $k$ & 1 & 2 & 3 & 4 & 5 & 6 & 7 & 8 & 9  \\
      \midrule
Number of variables ($n=3k+3$) & 6 & 9 & 12 & 15 & 18 & 21 & 24 & 27 & 30\\
Number of parameters ($m+d=6k+3$)\quad & 9 & 15 & 21 & 27 & 33 & 39 & 45 & 51 & 57  \\\midrule
Steady state degree & 3 & 5 & 7 & 9 & 11 & 13 & 15 & 17 & 19\\
Best computed lower bound of $\mrc_{>0}$   & 1 & 3 & 3 & 5 & 5 &  5 & 5 & 3 & 3\\
  \bottomrule
\end{tabular}
\end{table}

\begin{table}[tb]
\caption{Running time in seconds for root bounds for $k$-site phosphorylation networks. The second and fourth row include the time for finding a cotransversal presentation. Empty entries indicate that the computation did not terminate within 3 hours.}
\label{tab:times_multisite}
\centering
\footnotesize
\begin{tabular}{lrrrrrrrrr}
\toprule
   Number of phosphorylation sites $k$ & \multicolumn{1}{c}{1} & \multicolumn{1}{c}{2} & \multicolumn{1}{c}{3} & \multicolumn{1}{c}{4} & \multicolumn{1}{c}{5} & \multicolumn{1}{c}{6} & \multicolumn{1}{c}{7} & \multicolumn{1}{c}{8} & \multicolumn{1}{c}{9}  \\
      \midrule
Stable intersection (\Cref{alg:bounds}) & 0.04 & 1.41 & 153.36\\
Using cotransversality (\Cref{thm:cotransversal}) & 0.02 & 0.05 & 0.06 & 0.48 & 10.25 & 409.79 & \\
Using  toric structure (\Cref{thm:toric_root_bound}(i))
& 0.02 & 0.04 & 0.34 & 0.96 & 3.92 & 13.48 & 40.47 & 107.59 & 262.71\\
Using both structures (\Cref{thm:toric_and_transversal}(ii)) & 0.01 & 0.01 & 0.01 & 0.04 & 0.30 & 3.18 & 34.88 & 372.87 & 3946.74\\
  \bottomrule
\end{tabular}
\end{table}

\printbibliography

\medskip

\goodbreak
{\small
\noindent {\bf Authors' addresses:}\\
\noindent 
Elisenda Feliu, University of Copenhagen \hfill{\tt efeliu@math.ku.dk}\\
Paul Alexander Helminck, IMPAN Warsaw \hfill{\tt phelminck@impan.pl}\\
Oskar Henriksson, CSBD and MPI-CBG Dresden \hfill{\tt oskar.henriksson@mpi-cbg.de}\\
Yue Ren, Durham University \hfill{\tt yue.ren2@durham.ac.uk}\\
Benjamin Schröter, KTH Stockholm \hfill {\tt schrot@kth.se}\\
Máté L. Telek, Budapest University of Technology and Economics \hfill {\tt mtelek@math.bme.hu}

 }

\end{document}